\documentclass[12pt,reqno]{amsart}

\usepackage{euscript}
\usepackage{amsthm}
\usepackage{amssymb,amscd}
\usepackage{amsmath}
\usepackage{graphicx}
\usepackage{color}
\usepackage{marginnote}
\usepackage{hyperref}
\hypersetup{colorlinks, linkcolor=blue,citecolor=blue}
%\oddsidemargin=6mm \evensidemargin=6mm \topmargin=-3mm
%\textheight=216mm \textwidth=140mm

%\usepackage{palatino}
%\usepackage{times}
%\usepackage[T1]{fontenc}
%\usepackage[charter]{mathdesign}

% Theorems

\newtheorem{theorem}{Theorem}[section]

\newtheorem{proposition}{Proposition}[section]

\newtheorem{corollary}[proposition]{Corollary}
\newtheorem{lemma}[proposition]{Lemma}

\newtheorem{conjecture}[proposition]{Conjecture}

\newtheorem{definition}[proposition]{Definition}
\numberwithin{equation}{section}

\theoremstyle{definition}
\newtheorem{remark}[proposition]{Remark}
% Theorems

% Macros
\newcommand{\Z}{\mathbb{Z}}
\newcommand{\R}{\mathbb{R}}
\newcommand{\Q}{\mathbb{Q}}

\newcommand{\T}{\mathbb{T}}

\newcommand{\cL}{\mathcal{L}}
\newcommand{\cC}{\mathcal{C}}
\newcommand{\cA}{\mathcal{A}}

\newcommand{\cD}{\mathcal{D}}
\newcommand{\cE}{\mathcal{E}}
\newcommand{\E}{\mathcal{E}}
\newcommand{\Om}{\Omega}

\def\eps{\varepsilon}
\def\A{\mathbb A}
\def\R{\mathbb R}
\def\T{\mathbb T}
\def\Q{\mathbb Q}
\def\Z{\mathbb Z}
\def\N{\mathbb N}

\def\cM{\mathcal M}

\def\cE{\mathcal E}
\def\cH{\mathcal H}

\def\~{\tilde}

\def\Gm{\Gamma}

\def\d{\delta}

\def\l{\lambda}

\def \sn {{\rm sn}\,}
\def \cn {{\rm cn}\,}

\def \am {{\rm am}\,}

\newcommand \e {\varepsilon}

\newcommand \f {\varphi}

\def\bdef{\begin{definition}}
\def\endef{\end{definition}}
\def\bthm{\begin{theorem}}
\def\ethm{\end{theorem}}
\def\blm{\begin{lemma}}
\def\elm{\end{lemma}}
\def\brm{\begin{remark}}
\def\erm{\end{remark}}
\def\bprop{\begin{proposition}}
\def\eprop{\end{proposition}}
\def\bcor{\begin{corollary}}
\def\ecor{\end{corollary}}
\def\be{\begin{eqnarray}}
\def\ee{\end{eqnarray}}
\def\beal{\begin{aligned}}
\def\enal{\end{aligned}}

\author{Guan Huang }
\address{Yau Mathematical Sciences Center, Tsinghua University, Beijing, China}
\email{huangguan@mail.tsinghua.edu.cn}
\author{Vadim Kaloshin}
\address{Department of Mathematics, University of Maryland, College Park, MD, USA}
\email{vadim.kaloshin@gmail.com}
\author{Alfonso Sorrentino}
\address{Dipartimento di Matematica, Universit\`a degli Studi di Roma ``Tor Vergata'', Rome, Italy.}
\email{sorrentino@mat.uniroma2.it}
\begin{document}

\title[]{Nearly circular domains which are integrable close to the boundary are ellipses}

\maketitle
\bibliographystyle{plain}
\begin{abstract}
The Birkhoff conjecture says that the boundary of a strictly
convex integrable billiard table is necessarily an ellipse. In this article, 
{we consider a stronger notion of integrability, namely integrability {close to the boundary}}, 
and prove a local version of this conjecture: a small  perturbation of an ellipse of small 
eccentricity which preserves integrability near the  boundary, is itself an ellipse. 
This extends the result in \cite{ADK}, where {integrability was assumed on a larger set}. 
In particular, it shows that (local) integrability near the boundary implies global integrability.
{One of the crucial ideas in the proof consists in  analyzing Taylor expansion  of the corresponding
action-angle coordinates with respect to the eccentricity parameter, 
deriving and studying  higher order conditions for the preservation of integrable rational caustics}. 
\end{abstract}

\section{Introduction}A {\it mathematical billiard} is a system describing 
the inertial motion of a point mass inside a domain, with elastic reflections at 
the boundary (which is assumed to have infinite mass). This simple model 
has been first proposed by G.D. Birkhoff as a mathematical playground where 
``{\it the formal side, usually so formidable in dynamics, almost completely 
disappears and only the interesting qualitative questions need to be considered }'', 
\cite[pp. 155-156]{Birkhoff}.\medskip

Since then billiards have captured much attention in many different contexts, becoming 
a very popular subject of investigation. Not only is their law of motion very physical and intuitive, 
but billiard-type dynamics is ubiquitous. Mathematically, they offer models in every subclass of 
dynamical systems (integrable, regular, chaotic, etc.); more importantly, techniques initially devised for 
billiards have often been applied and adapted to other systems, becoming standard tools and 
having ripple effects beyond the field. \medskip

Let us first recall some properties of the billiard map. We refer to \cite{Siburg, Tabach} for a more comprehensive introduction to the study of billiards.\medskip

Let $\Omega$ be a strictly convex domain in $\R^2$ with $C^r$ boundary $\partial \Omega$,
with $r\geq 3$. The phase space $M$ of the billiard map consists of unit vectors
$(x,v)$ whose foot points $x$ are on $\partial \Omega$ and which have inward directions.
The billiard ball map $f:M \longrightarrow M$ takes $(x,v)$ to $(x',v')$, where $x'$
represents the point where the trajectory starting at $x$ with velocity $v$ hits the boundary
$\partial \Omega$ next, and $v'$ is the {\it reflected velocity}, according to
the standard reflection law: the angle of incidence is equal to the angle of reflection (Figure \ref{billiard}).

\begin{remark}
Observe that if $\Omega$ is not convex, then the billiard map is not continuous;
in this article we will be interested only in strictly convex domains (see Remark \ref{Matherglancing}).
Moreover, as pointed out by Halpern \cite{Halpern}, if the boundary is not at
least $C^3$, then the (continuous) Billiard flow might not be complete (or, equivalently, there might be non-trivial orbits with finite total length).
\end{remark}

Let us introduce coordinates on $M$.
We suppose that $\partial \Omega$ is parametri\-zed  by  arc-length $s$ and
let $\gamma:  \frac{\R}{|\partial\Omega| \Z} \longrightarrow \R^2$ denote such a parametrization,
where $|\partial \Omega|$ denotes the length of $\partial \Omega$. Let $\theta$
be the angle between $v$ and the positive tangent to $\partial \Omega$ at $x$.
Hence, $ M$ can be identified with the annulus $\A = \frac{\R}{|\partial\Omega| \Z} \times (0,\pi)$
and the billiard map $f$ can be described as
\begin{eqnarray*}
f:  \A &\longrightarrow& \A\\
(s,\theta) &\longmapsto & (s',\theta').
\end{eqnarray*}

\begin{figure} [h!]
\begin{center}
\includegraphics[scale=0.25]{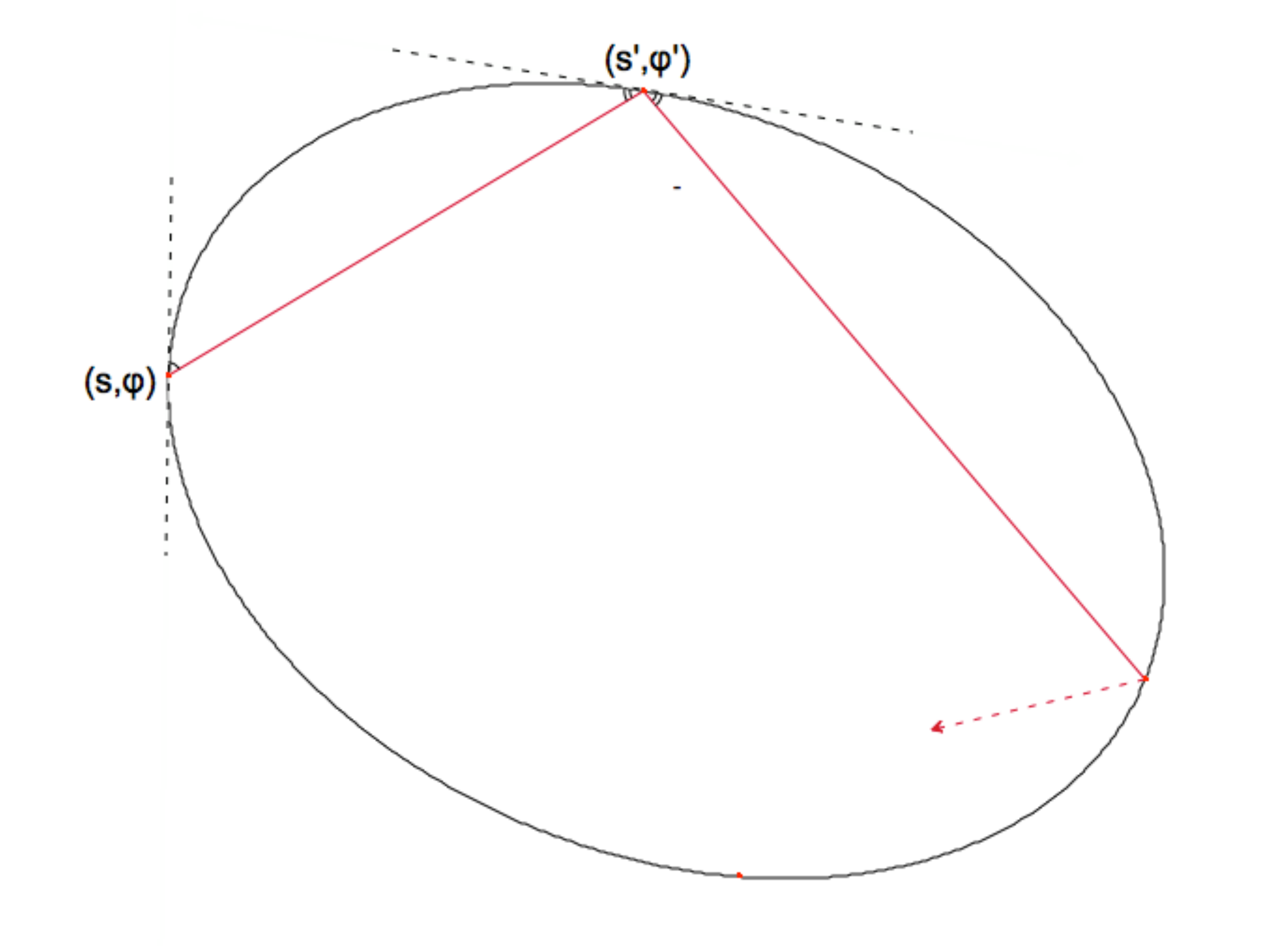}
\caption{}
\label{billiard}
\end{center}
\end{figure}

In particular $f$ can be extended to $\bar{\A}=\frac{\R}{|\partial\Omega| \Z} \times [0,\pi]$ by fixing
$f(s,0)=(s,0)$ and $f(s,\pi)=(s,\pi)$
for all $s$. \medskip

It is easy to check that the billiard map $f$ preserves the area form $\sin\theta \, ds\wedge d\theta$. 
If we denote by
$$
\ell(s,s') := \|\gamma(s) - \gamma(s')\|
$$
the Euclidean distance between two points on $\partial \Omega$, then one can check that
\begin{equation}\label{genfunctbill}
\left\{ \begin{array}{l}
\dfrac{\partial \ell}{\partial s}(s,s') = -\cos \theta \\
\dfrac{\partial \ell}{\partial s'}(s,s') =  \cos \theta'\,.
\end{array}\right.
\end{equation}

\begin{remark}
If we consider the lift  to the universal cover and introduce new coordinates
$(x,y)=(s, -\cos \theta) \in \R \times (-1,1)$, then the billiard map is a twist map
with $\ell$ as generating function and it preserves the area form $dx \wedge dy$. See \cite{Siburg, Tabach}.
\medskip
\end{remark}

Despite the apparently simple (local) dynamics, the qualitative dynamical properties of billiard maps are extremely {non-local}. 
This global influence on the dynamics translates into several intriguing {\it rigidity  phenomena}, which
are at the basis of several unanswered questions and conjectures (see for example \cite{ADK, DKW, HKS, KS, Siburg, SorDCDS,Tabach}). 
Amongst many, in this article we will address the question of classifying  {\it integrable billiards}, also known as {\it Birkhoff conjecture}.

%%%%%%%%%%%%%%%%%%%%%%%%%%%%%%%%%%%%%%

\subsection{Integrable billiards and Birkhoff conjecture}

The easiest example of billiard is given by a billiard in a disc $\cD=\cD_R$  of radius $R$. It is easy to check in this case that 
the angle of reflection remains constant at each reflection (see also  \cite[Chapter 2]{Tabach}). If we denote by $s$  the arc-length 
parameter ({\it i.e.}, $s\in {\R}/  {\tiny 2\pi R} \Z$) and by $\theta \in (0,\pi)$ the angle of reflection, then the billiard map  
has a very simple form:
$$
f(s,\theta) = (s + 2R\, \theta,\; \theta).
$$
In particular, $\theta$ stays constant along the orbit and it represents an {\it integral of motion} for the map.
Moreover, this billiard enjoys the peculiar property of
having  the phase space -- which is topologically a cylinder --  completely foliated by homotopically non-trivial invariant curves 
${\Gamma}_{\theta_0}=\{\theta\equiv \theta_0\}$. These curves correspond to concentric circles of radii 
$\rho_0= R\cos \theta_0$ and are examples of what are called {\it caustics}, {which are defined as follows:}
\medskip

{\it A smooth convex curve $\Gm \subset \Omega$ is called {\em a caustic}, if whenever a trajectory is tangent to it, 
then it  remains tangent after each reflection (see figure \ref{circle-billiard}). }
\medskip

Notice that in the circular case, each caustic $\Gamma$ corresponds to an invariant curve of the associated billiard map $f$ 
and, therefore, has a well-defined rotation number.

\begin{figure} [h!]
\begin{center}
\includegraphics[scale=0.27]{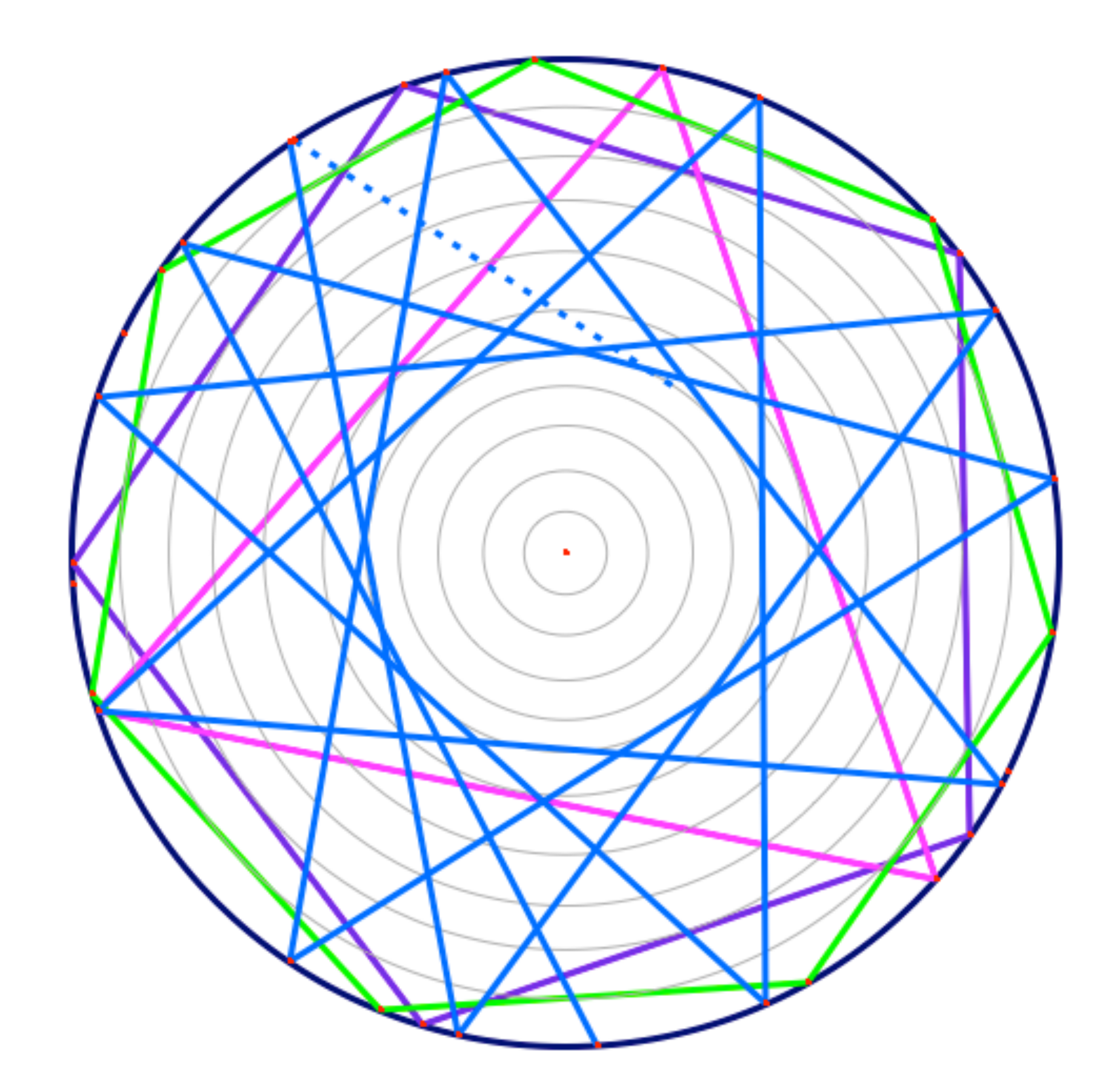}
\caption{Billiard in a disc}
\label{circle-billiard}
\end{center}
\end{figure}

A billiard in a disc is an example of an  {\it integrable billiard}. There are different  ways to define global/local integrability for billiards 
(the equivalence of these notions is an interesting problem itself):
\begin{itemize}
\item[-] either through the existence of an integral of motion, globally or locally near the boundary (in the circular case  
an integral of motion is given by  $I(s,\theta)=\theta$), 
\item[-] or through the existence of a (smooth) foliation {of the whole phase space or  of an open subset (for example, of 
a neighborhood of the boundary $\{\theta=0\}$)}, consisting of invariant  curves of the billiard map; for example, in 
the circular case these are given by ${\Gamma}_{\theta}$. This property translates (under suitable assumptions) 
into the existence of a (smooth) family of caustics, globally or locally near the boundary 
(in the circular case, the concentric circles of radii $R\cos \theta$).
\medskip
\end{itemize}

In \cite{Bialy},  Misha Bialy proved the following  result concerning global integrability (see also \cite{Woi}):
\medskip

\noindent {\bf Theorem (Bialy).}
\noindent {\it If the phase space of the billiard ball map is globally foliated by continuous
invariant curves which are not null-homotopic, then  it corresponds to  a  billiard in a disc.}
\medskip

However, while circular billiards are the only examples of 
global integrable billiards, {non-global} integrability itself is 
still an intriguing open question. One could  consider a  billiard 
in an ellipse: this is in fact  integrable, yet the dynamical picture 
is very distinct from the circular case: as it is showed in 
figure \ref{ellipse-billiard}, each trajectory which does not pass 
through a focal point, is always tangent to precisely one confocal 
conic section, either a confocal ellipse or the two branches of 
a confocal hyperbola (see for example \cite[Chapter 4]
{Tabach}). Thus, the confocal ellipses inside an elliptic 
billiard are convex caustics, but they do not foliate the whole 
domain: the segment between the two foci is left out (describing 
the dynamics explicitly is much more complicated than in the circular case: see for example \cite{Taba}). 
\medskip  

\begin{figure} [h!]
\begin{center}
\includegraphics[scale=0.18]{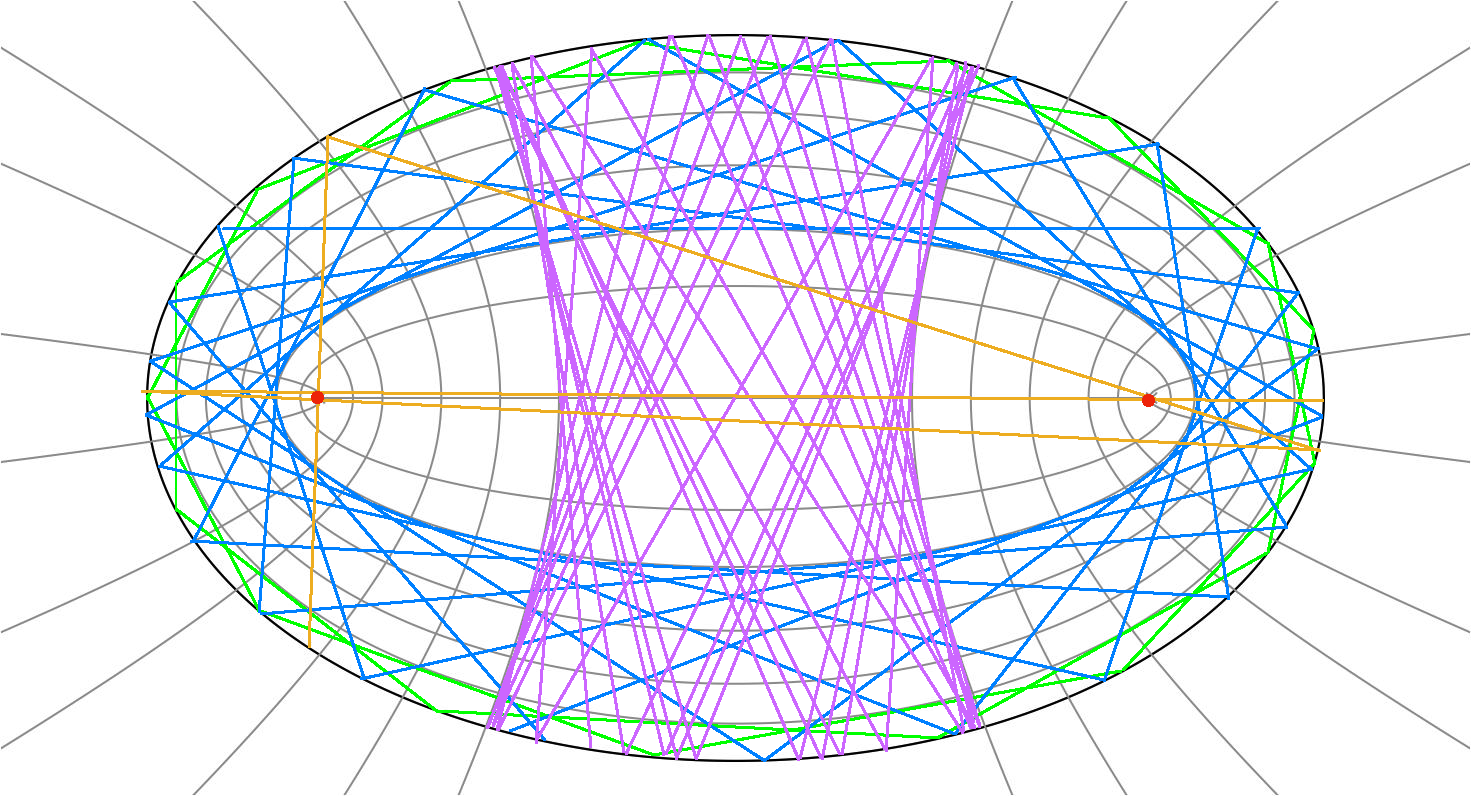}
\caption{Billiard in an ellipse}
\label{ellipse-billiard}
\end{center}
\end{figure}

\noindent{\bf Question (Birkhoff).} {\it Are there other examples 
of {integrable} billiards?}
\medskip

\begin{remark}
Although some vague indications of this question can be found 
in \cite{Birkhoff}, to the best of our  knowledge, its first 
appearance as a conjecture was in a paper by Poritsky 
\cite[Section 9]{Po},\footnote{In \cite[Footnote 1]{Po} Poritsky acknowledged that the results in the paper
were obtained in 1927--29 while he was National Research Fellow in Mathematics at Harvard University, 
presumably under the supervision of Birkhoff.
Although the author does not attribute  this conjecture explicitly to Birkhoff, yet he cites many of his papers on the topic, hence  it is reasonable to surmise Birkhoff's influence behind it.}
which was published several years after Birkhoff's death.
Thereafter, references to this conjecture (either as {\it Birkhoff conjecture} or {\it Birkhoff-Poritsky conjecture}) repeatedly appeared in the literature: 
see, for example, Gutkin \cite[Section 1]{Gutkin}, Moser \cite[Appendix A]{Mo}, Tabachnikov \cite[Section 2.4]{Tabach1}, etc.
\end{remark}

\begin{remark}\label{Matherglancing}
In \cite{Mather82} Mather  proved the non-existence
of caustics (hence, the non-integrability) if the curvature of the boundary vanishes at least at a point.
This observation justifies the restriction of our attention  to strictly convex domains.
\medskip
\end{remark}

\begin{remark}
Interestingly, Treschev in \cite{Tre} gives indication that
there might exist analytic  billiards, different from ellipses, for which the dynamics 
in a neighborhood of the elliptic period-$2$ orbit is conjugate to a rigid rotation. 
These billiards can be seen as an instance of {\it local integrability}; however, 
this regime is somehow complementary to the one conjectured by Birkhoff:
{one has local integrability in a neighborhood of an elliptic periodic orbit of 
period $2$, while Birkhoff  conjecture is related to  integrability in a neighborhood of 
the boundary. This gives an indication that these two notions of integrability might differ.  }
\end{remark}

\medskip

\begin{remark}
The Birkhoff conjecture can be also thought as an analog, in the case of billiards, of the following task: classifying {\it integrable} (Riemannian) geodesic flows on $\T^2$. The complexity of this question, of course, depends on the notion of integrability that one considers.
If one assumes that the whole space space is foliated by invariant Lagrangian graphs ({\it i.e.}, the system is {\it $C^0$-integrable}), then it follows from Hopf conjecture \cite{BI} that the associated metric must be flat.
However, the question becomes more challenging -- and it is still open -- if one considers integrability only on an open and dense set ({\it global integrability}), or assumes the existence of an open set foliated by invariant Lagrangian graphs ({\it local integrability}).
\medskip
 Example of globally integrable (non-flat) geodesic flows on $\T^2$ are those associated to 
{\it Liouville-type metrics}, namely metrics of the form
$$
ds^2  = (f_1(x_1) + f_2(x_2)) \,(dx_1^2 +dx_2^2).
$$
A folklore conjecture states that  these metrics are the only globally (resp. locally) integrable metrics on $\T^2$.
\medskip
A partial answer to this conjecture (global case) is provided in \cite{BFM}, where  the authors prove it  under the assumption that the system admits an integral of motion which is quadratic in the momenta.
\medskip
The question to which we provide an affirmative answer in this article (local Birkhoff conjecture), can be considered as an analog, in the billiard setting, of the above  conjecture (local case). It is interesting to point out, however,  that - contrarily to what happens with billiards --  there is  evidence  that this local conjecture might be false for geodesic flows (see \cite{CK}).\\
\end{remark}

%\medskip

Despite its long history and the amount of attention that Birkhoff conjecture has captured, 
it remains still open. As far as our understanding of integrable billiards is concerned,
the  most important related results are the above--mentioned theorem by Bialy \cite{Bialy} 
(see also \cite{Woi}), a result by Delshams and Ram\'irez-Ros \cite{DRR} in which they study 
entire perturbations of elliptic billiards and prove that any nontrivial symmetric perturbation of 
the elliptic billiard is not integrable, {a result by Innami\footnote{We are grateful to M. Bialy for pointing out this reference.} 
\cite{Innami}, in which he shows that the existence of caustics for all rotation numbers in $(0,1/2)$ implies that 
the billiard must be an ellipse}, and a more recent result by Avila, De Simoi and Kaloshin \cite{ADK} 
in which they show a perturbative version of this conjecture for  ellipses of small eccentricity, 
{assuming the existence of caustics for all rotation numbers in $(0,1/3]$.}
The latter result was generalised to ellipses of any eccentricity by Kaloshin and Sorrentino 
\cite{KS}. 
\medskip

Let us introduce an important notion for this paper. 
\medskip

\begin{definition}
{\rm (i)} We say that $\Gamma$  is an {\rm{integrable} rational caustic} for the billiard map in $\Omega$,
if the corresponding (non-contractible) invariant curve  consists 
of periodic points; in particular, the corresponding rotation number is rational. \newline
\medskip
{\rm (ii)} Let $q_0\geq2$.  If the billiard map associated to $\Omega$ admits integrable 
rational caustics of rotation number $p/q$ for all $0<p/q <1/q_0$, we say that 
$\Omega$ is {\rm $q_0$--rationally integrable. }
\medskip
\end{definition}

\begin{remark}
A simple sufficient condition for rational integrability is the following (see \cite[Lemma 1]{ADK}).
Let ${\mathcal C}_{\Omega}$ denote the union of all smooth convex caustics of the billiard in  
$\Omega$; if the interior of ${\mathcal C}_{\Omega}$ contains caustics of rotation number 
$p/q$ for all $0<p/q <1/q_0$, then $\Om$ is $q_0$-rationally integrable. 
\end{remark}

Let us denote with $\cE_{e,c} \subset \R^2$ an ellipse of eccentricity $e$ and 
semi-focal distance $c$. We state the following local version of Birkhoff conjecture.
\smallskip

\begin{conjecture}\label{conjecture}
For any integer $q_0\ge 3$, there exist $e_0=e_0(q_0)\in (0,1),$\ $m_0=m_0(q_0),\ n_0=n_0(q_0) \in \N$ 
such that the following holds. For each $0< e \leq e_0$ and $c\geq 0$, there exists $\eps=\e(e,c,q_0) > 0$ such that any $q_0$-rationally 
integrable $C^{m_0}$--smooth domain $\Omega$, whose boundary $\partial \Om$ is 
$C^{n_0}$--\,$\varepsilon$-close to an ellipse $\cE_{e,c}$, is itself an ellipse.
\end{conjecture}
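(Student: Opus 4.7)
The plan is to adapt and refine the perturbative approach of \cite{ADK} (generalized to arbitrary eccentricity in \cite{KS}) so as to work under the much weaker hypothesis of $q_0$-rational integrability in place of $3$-rational integrability. The crucial new ingredient, as signalled in the abstract, is a careful Taylor expansion of the action-angle coordinates in the eccentricity parameter $e$, which yields additional higher-order conditions from each preserved rational caustic that are invisible to the leading-order analysis used in the previous works.

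First I would set up action-angle coordinates $(I,\varphi)$ for the ellipse $\E_{e,c}$, expanding both the coordinates and the rotation-number function $\omega(I;e,c)$ as formal power series in $e$ around the circular case $e=0$ (for which every object is explicit). Given a normal perturbation $\mu:\S\to\R$ of $\partial \E_{e,c}$, I would then follow \cite{ADK,KS} to derive the linearized preservation condition $\mathcal{L}^{e,c}_{p/q}[\mu]=0$ that must hold for the rational caustic of rotation number $p/q$ to survive. Expanding
\[
\mathcal{L}^{e,c}_{p/q} \;=\; \sum_{k\geq 0} e^k\, \mathcal{L}^{(k)}_{p/q}
\]
then produces, at each order in $e$, a new linear constraint on $\mu$, in addition to the single constraint used in \cite{ADK}.

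The key step is an \emph{infinitesimal rigidity} statement: the only $\mu$ satisfying $\mathcal{L}^{e,c}_{p/q}[\mu]=0$ for every $p/q\in(0,1/q_0)$ and every order in $e$ is, up to finitely many degrees of freedom, tangent to the family of ellipses (modulo rigid motions). At order $e^0$ the operator $\mathcal{L}^{(0)}_{p/q}$ selects vanishing of the Fourier mode $q$ of $\mu$; as $p/q$ ranges over $(0,1/q_0)$ this only annihilates Fourier modes with $q>q_0$, so the low modes must be eliminated using the higher-order operators $\mathcal{L}^{(k)}_{p/q}$ for $k\geq 1$. The structural point to verify is that, for each $k$, $\mathcal{L}^{(k)}_{p/q}$ couples mode $q$ of $\mu$ only with finitely many (controlled) other modes, so that the resulting band-limited linear system can be inverted once enough values of $p/q$ and enough orders in $e$ are used; the tangent directions to the ellipse family account precisely for the kernel that survives.

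The main obstacle I anticipate is a combinatorial and algebraic one: one must show that the higher-order conditions $\mathcal{L}^{(k)}_{p/q}[\mu]=0$ provide genuinely new information and that no hidden conservation law causes the system to degenerate. This requires identifying the Fourier structure of the elliptic-function coefficients appearing in $\mathcal{L}^{(k)}_{p/q}$ and verifying a non-vanishing/invertibility estimate uniform in $p/q$. In parallel, one must track the loss of regularity incurred at each step, which determines the exponents $m_0(q_0),\,n_0(q_0)$ in the statement. Once the quantitative infinitesimal rigidity is in hand, the full conjecture follows, as in \cite{ADK,KS}, by a deformation argument: interpolate $\partial\Omega$ with a curve of domains starting from $\E_{e,c}$, show that the infinitesimal generator at each time is tangent to the ellipse family, and integrate to conclude that $\Omega$ itself lies in that family, i.e.\ is an ellipse.
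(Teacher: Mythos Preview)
Your plan is essentially the one the paper follows, and you have correctly identified the main obstacle. Two points deserve emphasis.

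First, the statement you are trying to prove is a \emph{conjecture} in the paper: the authors carry out exactly the program you describe, but they only succeed unconditionally for $q_0\le 5$. For $q_0\ge 6$ the proof is conditional on the non-degeneracy of certain explicit matrices (their (7.8) and (7.13)), which they can write down but not verify in general. So the ``non-vanishing/invertibility estimate uniform in $p/q$'' that you flag as the main obstacle is precisely where the paper stops; your proposal does not indicate any mechanism for overcoming it, and neither does theirs. You should be aware that this is not a minor technicality to be filled in later.

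Second, there is a subtlety you do not mention that the paper singles out as the decisive new idea. To recover the low modes $3\le k\le q_0$ one must use caustics with numerator $p>1$, e.g.\ rotation numbers $\tfrac{2}{2q+1}$ or $\tfrac{3}{2q}$. The delicate point is that the preservation conditions for $\tfrac{2}{2q+1}$ and $\tfrac{1}{2q+1}$ are \emph{identical to leading order}; the new information comes only from the higher-order terms in the $e$-expansion. Your expansion $\mathcal{L}^{e,c}_{p/q}=\sum_k e^k\mathcal{L}^{(k)}_{p/q}$ is set up to see this, but you should be explicit that varying $p$ at fixed $q$ is what supplies the missing equations, and that this only works because of the higher-order analysis.

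Finally, your endgame differs from the paper's. You propose to interpolate $\partial\Omega$ to $\mathcal{E}_{e,c}$ and integrate an infinitesimal tangency condition. The paper (following \cite{ADK,KS}) instead takes the ellipse $\mathcal{E}''$ that best approximates $\Omega$ in $C^{n_0}$, and shows via the annihilation estimates that some other ellipse $\bar{\mathcal{E}}$ approximates $\Omega$ strictly better, contradicting minimality unless $\Omega$ is itself an ellipse. Your deformation approach would require the infinitesimal rigidity to hold along the \emph{entire} interpolating family, with uniform constants, which is more than the paper establishes; the best-approximation argument is more robust here and you should consider adopting it.
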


\medskip

In this paper we prove this conjecture in some cases and provide a  proof for the remaining ones 
based on certain non-degeneracy conditions. These non-degeneracy conditions are explicit and computable: in Section \ref{case-general} we provide a  description of how to implement an algorithm to verify 
them by means of symbolic computations.   

More precisely, our main results are the following.\medskip

\begin{theorem} \label{main-thm}
Conjecture \ref{conjecture} holds true for $q_0=2, 3, 4,5$, with $m_0=40q_0$ and $n_0=3q_0$. \medskip
\end{theorem}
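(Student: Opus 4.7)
The plan is to refine the Avila--De Simoi--Kaloshin strategy of \cite{ADK} by working order-by-order not only in the size of the perturbation but also in the eccentricity parameter $e$. Parametrize a candidate domain $\Omega$ as a normal graph $\mu:\S\to\R$ over $\cE_{e,c}$ and translate, following \cite{ADK}, the preservation of an integrable caustic of rotation number $p/q$ into a single linear condition
\[
T_{p/q}(e)[\mu] \;=\; \int_{\S} K_{p/q}(\phi;e)\,\mu(\phi)\,d\phi \;=\; 0,
\]
where the kernel $K_{p/q}(\,\cdot\,;e)$ is a Melnikov-type integral along the $p/q$-periodic orbit of the elliptic billiard, expressible via the action-angle coordinates of $\cE_{e,c}$.

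The next and central step is to Taylor-expand
\[
K_{p/q}(\phi;e) \;=\; \sum_{k\geq 0} e^{k}\,K_{p/q}^{(k)}(\phi)
\]
by expanding the action-angle chart of $\cE_{e,c}$ in powers of $e$ around the circular case $e=0$, so that each rational caustic $p/q\in(0,1/q_0)$ now produces an entire hierarchy of linear constraints $T_{p/q}^{(k)}[\mu]=0$, $k=0,1,2,\dots$. At order $k=0$ the kernel reflects the $q$-fold rotational symmetry of the circular orbit, so it annihilates only those Fourier modes of $\mu$ whose index is a multiple of $q$; this is the version used in \cite{ADK}. Under the stronger hypothesis in that paper ($p/q\leq 1/3$), the zero-order conditions alone are already sufficient, but with the weaker hypothesis $q\geq q_0+1$ of the present theorem, a finite set of low-frequency Fourier modes of $\mu$ escapes the zero-order sieve, and it is precisely these that must be controlled by the higher-order operators $K_{p/q}^{(k)}$ with $k\geq 1$. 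For each $q_0\in\{2,3,4,5\}$ I would therefore identify a finite family of pairs $(p/q,k)$ and verify that the associated $T_{p/q}^{(k)}$ collectively span the orthogonal complement, inside the space of perturbations, of the tangent space at $\cE_{e,c}$ to the finite-dimensional family of nearby ellipses; once this infinitesimal rigidity is established, an implicit-function argument in a $C^{m_0}$-neighborhood of $\cE_{e,c}$, of the same shape as the one used in \cite{ADK}, promotes it to the deformational statement of the theorem. The exponents $m_0=40q_0$ and $n_0=3q_0$ reflect the maximal order of Taylor expansion in $e$ one needs together with the loss of derivatives incurred in iterating the billiard map $q$ times.

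The main obstacle I expect is the non-degeneracy of the finite matrix of higher-order coefficients: unlike the circular-symmetry argument at order zero, there is no intrinsic reason why the operators $T_{p/q}^{(k)}$ should be linearly independent on the surviving low-frequency modes, and the non-vanishing of the relevant determinant has to be verified by explicit symbolic computation. This appears feasible, but increasingly laborious, for $q_0=2,3,4,5$, which is why the theorem is stated only up to $q_0=5$; for larger $q_0$, the paper reduces the problem to checking an analogous explicitly computable non-degeneracy condition, as indicated in Section~\ref{case-general}.
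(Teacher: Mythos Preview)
Your high-level plan matches the paper's: expand the action-angle coordinates of $\cE_{e,c}$ in powers of $e$, translate each preserved caustic into a linear-plus-quadratic-error constraint on the Fourier coefficients of $\mu$, use these to force the low modes $a_k,b_k$ with $3\le k\le q_0$ to be $O(e^2\|\mu\|)$, and then close with an approximation argument against the family of ellipses. But there is a genuine conceptual slip at the heart of your outline.

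A single preserved caustic of rotation number $p/q$ yields \emph{one} constraint on $\mu$, not a hierarchy $T_{p/q}^{(k)}[\mu]=0$ indexed by $k$. The eccentricity $e$ is fixed in this theorem, so Taylor-expanding the kernel $K_{p/q}(\cdot;e)$ in $e$ does not turn one equation into many; it merely exhibits that one equation as a power series. The additional constraints the paper uses to pin down the modes $3\le k\le q_0$ come instead from caustics with \emph{numerator} $p>1$: for $q_0=3$ the system \eqref{linear-system-3} uses the caustics $1/5,\,1/7,\,2/7$; for $q_0=4$ the systems \eqref{linear-system-4} and \eqref{linear-system-5} use $1/5,1/7,1/9,2/9$ for the odd modes and $1/6,1/8,1/10,1/12,1/14,3/14$ for the even ones; for $q_0=5$ one needs $2/11,\,2/13,\,3/16$ and an inductive step (see \eqref{big1-section6}). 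The subtlety---and the reason the $e$-expansion is indispensable---is that to leading order in $e$ the condition coming from the caustic $2/(2q{+}1)$ coincides with that from $1/(2q{+}1)$ (Remark~1.9(v)), so one must carry the expansion far enough before the $p=2$ or $p=3$ caustic contributes anything new. This produces finite linear systems whose coefficient matrices have a hierarchical structure in powers of $e$, and whose invertibility is precisely the explicit non-degeneracy check you correctly anticipate. Finally, the closing step in Section~\ref{main-proof} is not an implicit-function theorem but a minimization over nearby ellipses followed by a contradiction: one shows the $C^{n_0}$-distance from $\Omega$ to the best-approximating ellipse is strictly smaller than itself unless it vanishes.
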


\begin{theorem}\label{main-thm2} For any integer $q_0\geq 6$, 
Conjecture \ref{conjecture} holds true  with $m_0=40q_0$ and $n_0=3q_0$,  
provided that the { $q_0-2$ matrices \eqref{matrixAodd}-\eqref{matrixEven}  are non-degenerate. }
\end{theorem}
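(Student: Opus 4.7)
The plan is to extend the perturbative scheme of \cite{ADK} and \cite{KS} to arbitrarily high orders in the eccentricity $e$. Write $\partial\Omega = \{\gamma_{e,c}(s) + \mu(s)\nu(s)\}$, where $\gamma_{e,c}$ parametrizes $\cE_{e,c}$, $\nu$ is the inward unit normal, and $\|\mu\|_{C^{n_0}} \le \varepsilon$. Rational integrability at rotation number $p/q$ gives a nonlinear persistence equation for the corresponding caustic; after linearizing in $\varepsilon$ one obtains an equation of the form $\mathcal{L}_{p/q}(\mu) = O(\varepsilon^2)$, where $\mathcal{L}_{p/q}$ comes from the variation of the generating function of the unperturbed elliptic billiard, written in the action--angle coordinates $(I,\varphi)$ for $\cE_{e,c}$. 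These coordinates, and the corresponding normal form near $\{\theta=0\}$, are explicit in terms of Jacobi elliptic functions whose modulus depends on $e$.

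The crucial new ingredient is to Taylor-expand $\mathcal{L}_{1/q}$ in the eccentricity:
\[
\mathcal{L}_{1/q} \;=\; \mathcal{L}_{1/q}^{(0)} \;+\; e\,\mathcal{L}_{1/q}^{(1)} \;+\; e^{2}\mathcal{L}_{1/q}^{(2)} \;+\; \cdots
\]
The zeroth-order term $\mathcal{L}_{1/q}^{(0)}$ is precisely the circular operator studied in \cite{ADK}: its kernel is the four-dimensional space of ``trivial'' Fourier modes corresponding to infinitesimal rigid motions and dilations of the disc. Each higher-order operator $\mathcal{L}_{1/q}^{(k)}$ couples Fourier modes of $\mu$ whose indices differ by multiples of $q$, and at each order produces additional finite-dimensional linear constraints on low-frequency Fourier coefficients of $\mu$ that were invisible at order zero. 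Collecting these constraints for $q_0 < q \le 2q_0-2$ at the appropriate orders in $e$ produces the $q_0-2$ block matrices \eqref{matrixAodd} (odd $q$) and \eqref{matrixEven} (even $q$).

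If those matrices are non-degenerate, all Fourier modes of $\mu$ outside the four-dimensional trivial subspace are annihilated, so $\mu$ must lie in the tangent space to the family $\{\cE_{e',c'}\}$ at $\cE_{e,c}$ and therefore corresponds, at leading order in $\varepsilon$, to another ellipse. A quantitative implicit function / iterative scheme, following \cite[\S4--5]{ADK} and \cite{KS}, then upgrades this infinitesimal rigidity to the full nonlinear conclusion, provided sufficiently many derivatives of $\partial\Omega$ are controlled --- this is the source of the exponents $m_0 = 40q_0$ and $n_0 = 3q_0$.

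The main obstacle is the algebraic and analytic bookkeeping of the higher-order operators: the Taylor coefficients in $e$ of the action--angle map and of the generating function involve intricate combinations of Jacobi elliptic functions and their derivatives, and the entries of $\mathcal{L}_{1/q}^{(k)}$ must be extracted precisely enough to identify the matrices \eqref{matrixAodd}-\eqref{matrixEven} and to set up the algorithmic test described in Section \ref{case-general}. For $q_0\le 5$ the non-degeneracy can be verified directly by symbolic computation, giving Theorem \ref{main-thm}; for $q_0\ge 6$ both the combinatorial explosion of the matrix size and the lack of a closed-form description of their entries force us to impose non-degeneracy as a hypothesis, to be verified case-by-case or by a future structural argument.
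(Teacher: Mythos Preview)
Your outline captures the broad strategy---expand the caustic-preservation conditions in $e$, extract finite linear systems, invoke non-degeneracy, then run the approximation/iteration argument---but several concrete claims are wrong in ways that would derail the proof.

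First, the trivial subspace has dimension five, not four: the harmonics $\{1,\cos\varphi,\sin\varphi,\cos 2\varphi,\sin 2\varphi\}$ correspond to homothety, two translations, and two hyperbolic rotations (see Remark~\ref{r_motionDescription} and Appendix~B). More seriously, the claim that the matrices \eqref{matrixAodd}--\eqref{matrixEven} arise from the constraints for $q_0<q\le 2q_0-2$ with rotation numbers $1/q$ is incorrect on both counts. The paper uses caustics of rotation number $1/q$ for $q$ ranging roughly up to $3q_0$ (odd case) and beyond (even case), and---crucially---also caustics of rotation numbers $\tfrac{2}{2k+1}$ and $\tfrac{3}{2k}$. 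Without these non-unit numerators you simply do not get enough independent equations to determine the low Fourier modes $a_3,\dots,a_{q_0}$.

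The subtle point you miss entirely is the one the paper flags as its main novelty: to leading order in $e$, the condition from the $\tfrac{2}{2q+1}$ caustic is \emph{identical} to that from the $\tfrac{1}{2q+1}$ caustic, so a naive count of constraints fails. Independent information is extracted only by pushing the $e$-expansion to high enough order that the two conditions differ; this is what produces the specific structure of the matrices \eqref{matrixAodd}--\eqref{matrixEven} and requires the inductive scheme of Section~\ref{case-general} (parametrized by $m=k_0,k_0-1,\dots,2$), not a single linear system. Relatedly, your statement that $\mathcal{L}_{1/q}^{(k)}$ couples modes differing by multiples of $q$ is wrong: equation~\eqref{fourier1} shows the coupling at order $e^{2n}$ is between modes $a_{q-2l}$ with $|l|\le n$, i.e.\ shifts by even integers up to $2n$, independent of $q$.
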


\smallskip

\begin{remark} 
({\it i}) Case $q_0=2$ was proven in \cite{ADK} {(see also \cite{Innami})}. \medskip

({\it ii}) Notice that $\eps(e,c,q_0)\to 0$ as $e\to 0^+$. 
Non-zero $e$, in fact, produces asymmetry and it is fundamental 
for our argument to work. The less $e$ is, the smaller must be the 
perturbations  that allow one to stay in asymmetric regime.\\
We point out  that $\varepsilon$ does not need to go to $0$ with $e$ for $q_0=2$ (see \cite{ADK}).
\medskip 

({\it iii}) The smoothness exponent  is probably not optimal. 
In the proof of one of  the key lemmata (Lemma \ref{rational-condition}),  we have directly used 
certain $C^1$-estimates from~\cite{ADK}. One may improve the smoothness exponent
by deriving $C^n$~estimates instead. \medskip 

{({\it iv}) Notice that we actually do not need the existence of all caustics of rotation number less than $1/q_0$;  
in fact, we  only use integrable rational 
caustics of rotation 
numbers of the form $j/q<1/q_0$ for $j=1,2,3$. \medskip 

{({\it v}) Analysis of caustics of rotation numbers $\frac{2}{2q+1}$  is fairly 
delicate\,\footnote{The same remark applies to rotation numbers $\frac{3}{2q}$, for $q$  not divisible by $3$.}. 
Either for a domain close to the circle or for an arbitrary sufficiently smooth  domain and large $q$, the condition of preservation 
of caustics of rotation numbers $\frac{2}{2q+1}$ and $\frac{1}{2q+1}$  {are the same} to the leading order! 
Thus, to obtain a new condition from caustics of rotation numbers $\frac{2}{2q+1}$ we need a precise information 
about {\it higher order dependence on the rotation number}. For small eccentricity $e$ this can be extracted from the Taylor
expansion of the action-angle variables with respect to the eccentricity parameter (see Appendix C for details). Without this precise 
information our method would not work! This analysis can be considered as the main novel feature of 
the present paper compared to \cite{ADK} and \cite{KS}.}
\medskip 

({\it vi})  The coefficients of matrices \eqref{matrixAodd}-\eqref{matrixEven} are 
completely determined  by the $e$-expansions of the action-angle parametrization for  
the elliptic billiard map, which, in turn, is explicitly given by  elliptic integrals 
(see  \eqref{action-para} and Appendix \ref{e-expansion-sec}). {In particular, 
the entries of these matrices are either $0$, $1$ or of the form ${\xi}\,{ \cos^{-2j} ( w \pi )  e^{2j}}$, 
where $\xi \in \Q$, $j\in \N$,  $w\in \{\frac{1}{2k+1}, \frac{2}{2k+1}, \frac{1}{2k}, \frac{3}{2k}: \; k>j \}$.
See also Remarks \ref{remarkmatrixAodd} and \ref{remarkmatrixAeven}.
}}
\end{remark}

\noindent{\bf Acknowledgements} GH and  AS would like to thank the FIM-ETH for the hospitality. 
GH acknowledges partial support of the National Natural Science Foundation  of China (Grant no. 11790273). VK acknowledges partial support of the NSF grant DMS-1402164 and 
the hospitality of the ETH Institute for Theoretical Studies and the support of  Dr. Max R\"ossler, 
the Walter Haefner Foundation and the ETH Z\"urich Foundation.
AS acknowledges the partial support of the Italian MIUR research grant: PRIN-2012-74FYK7 
``Variational and perturbative aspects of nonlinear differential problems''. 
VK is grateful to Jacopo De Simoi and Ke Zhang for useful discussions. 
AS would like to thank Pau Martin and Rafael Ram\'irez-Ros 
for useful discussions during his visit to UPC. 
Finally, the authors wish to express their enormous gratitude to the anonymous referee for 
her/his remarkable revision work.
\medskip

\section{The strategy of the proof} \label{secstrategyproof}
Let us consider the ellipse 
$$
{\cE_{e,c}} =\left \{(x,y)\in \R^2:\; \frac{x^2}{a^2} + \frac{y^2}{b^2} = 1\right\},
$$
centered at the origin and with semi-axes of lengths, respectively, $0<b\leq a$; 
in particular $e$ denotes its eccentricity, given by $e=\sqrt{1-\frac{b^2}{a^2}} \in [0,1)$ 
 and $c=\sqrt{a^2-b^2}$ the semi-focal distance.
{Observe that when $e=0$, then $c=0$ and $\cE_{0,0}$ degenerates to a $1$-parameter 
family of circles centered at the origin.} 
%\textb{Since we study ellipses of non-small eccentricity,
%we set $c=1$, which can be acheived by rescaling}. 
%\marginnote{\textr{I don't think we should set $c=1$ 
%here, since $c=a e$, and the cases we study correspond to $e<<1$.}}\medskip

The family of confocal elliptic caustics  in $\cE_{e,c}$ is given 
by (see also Figure \ref{ellipse-billiard}):
\begin{equation}\label{caustic}
C_{\l} = \left\{
(x,y)\in \R^2: \; \frac{x^2}{a^2-\l^2} +  \frac{y^2}{b^2-\l^2}=1
\right\} \qquad 0<\l<b.
\end{equation}
Observe that the boundary corresponds to $\l=0$, 
while the limit case $\l=b$ corresponds to the the two foci 
${\mathcal F}_{\pm}=(\pm \sqrt{a^2-b^2},0)$. Clearly, for 
$e=0$ we  recover the family of concentric circles 
described in Figure~\ref{circle-billiard}.\medskip

%\subsection {Elliptic polar coordinates}  \label{secellipticpolarcoords}
{Denote $\mathbb{T}:=\mathbb{R}/2\pi\mathbb{Z}$}. A more convenient coordinate frame for addressing 
our question is provided by the so-called 
{\it elliptic-polar coordinates} (or, simply, {\it elliptic coordinates})
$(\mu,\f) \in \R_{\geq 0} \times \mathbb{T}$, given by:
$$
\left\{
\begin{array}{l}
x= c \cosh \mu \, \cos\f \\
y= c \sinh \mu \, \sin\f,
\end{array}
\right.
$$
where $c=\sqrt{a^2-b^2}>0$ represents the semi-focal distance 
(in the case $e=0$, this parametrization degenerates to 
the usual polar coordinates). Observe that for each $\mu_*> 0$, 
the equation $\mu\equiv \mu_*$ represents  a confocal ellipse.\medskip

Therefore, in these elliptic polar coordinates  $\cE_{e,c}$ becomes:
$$
\cE_{e,c} =  \left\{({\mu_0},\f),\; \f \in  \mathbb{T} \right\},
$$
where  $\mu_0 =\mu_0(e):= \cosh^{-1}\left({1}/{e}  \right)$.\medskip
Then, any smooth perturbation $\Omega$ of the ellipse 
$\cE_{e,c}$ can be written in this elliptic-coordinate frame as
$$
\partial\Omega=\{(\mu_0+\mu(\varphi),\varphi): \varphi\in\mathbb{T}\},
$$
where $\mu(\varphi)$ is a  small smooth $2\pi$-periodic  function; hereafter, we will adopt this shorthand notation and write
$$
\partial{\Omega}={\cE}_{e,c}+\mu(\varphi).
$$

\bigskip

{Before describing the strategy of our proof, let us first recall 
the scheme in \cite{ADK}, and then describe the needed 
adjustments.
}
%\medskip
%
%{\it Step 0}. We recall the method proposed in [ADK] for the perturbations of the billiard in the circle.
%Action-angle variables for these billiards coincide with the polar coordinates. 
%
%\medskip
%{\it Step 1}.  To modify the technique to elliptic billiards we introduce the action-angle 
%coordinates of the billiard in the ellipse, using elliptic coordinates on the plane, and study 
%the expansions of the transforms of the coordinates with respect to the eccentricity $e$. 
%\medskip
%
%{\it Step 2}. We derive a quantitive necessary condition for preservation of an integrable 
%caustic with rational rotation number.
%\medskip
%
%{\it Step 3}.  We define Deformed Fourier Modes for the ellipse. 
%They will be denote by $\{C_q:\;q\in\mathbb{Z}\}$ and satisfy the following properties:
%\begin{enumerate}
%\item{} {\it Relation with Fourier Modes:} We have
%$$\{C_q: \; |q|\leq q_0\} =\{1,\frac{1}{k^r}\cos (2k\cdot),\; \frac{1}{k^{r}}\sin (2 k\cdot):\: k=1,\dots, q_0\}$$ and there exist $C(e)>0$ with $C(e)\to0$ as $e\to0$ such that 
%$$\|c_q-V_q\|_{r}\leq \frac{C(e)}{|q|},\quad |q|\geq q_0+1,$$
%where $V_q$ is the zero mean value function such that
%\[V_q^{(r)}(\cdot)=\begin{cases}\cos (2q\cdot),&q>q_0\medskip
%-\sin(2q\cdot),&q<-q_0.\end{cases}\]
%\end{enumerate}

\subsection{A preliminary scheme of proving Theorem \ref{main-thm} for $q_0=2$. }

In the case $q_0=2$, Theorem \ref{main-thm} was proven in \cite{ADK} and we now  describe  
the proof therein. {In order to get a clearer idea, let us  start from the simplified case of integrable infinitesimal deformations 
of a circle}.  \medskip

%\medskip 

%{\it A simplified scheme of proof of Theorem \ref{main-thm} for $q_0=2$:} 

\medskip 
Let $\Omega_0$ be a circle centered at the origin.
Let $\Om_\eps$ be a one-parameter family of deformations,
given in  polar coordinates by
$$
\partial\Om_\eps=\{(\mu_0+\eps
\mu(\varphi)+O(\eps^2),\varphi), 
\; \varphi\in \T\}.
$$
Fix a parametrization of the boundary $\varphi:\T \to \T$.
Consider the Fourier expansion of $\mu \circ \varphi$ :
\begin{align*}
  \mu \circ \varphi(\theta)=\mu'_0 + \sum_{k > 0} \mu'_{k,\varphi}\sin (k\theta)+ \mu''_{k,\varphi} \cos (k\theta).
\end{align*}

\begin{theorem}[Ram\'irez-Ros \cite{RR}] \label{thm:RR} 
If, for any sufficiently small $\eps$, $\Om_\eps$ has an integrable rational caustic $\Gm^\eps_{1/q}$ 
of rotation number $1/q$, then 
for a certain parametrization of the boundary\footnote{This parametrization of 
the boundary can be found in Lemma \ref{rational-condition}.} $\varphi_{1/q}(\theta)$, 
we have $\mu'_{q,\varphi_{1/q}} =\mu_{q,\varphi_{1/q}}'' = 0$.\\
\end{theorem}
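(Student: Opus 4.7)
The plan is to apply a Poincar\'e-Melnikov / Morse-Bott-Lyapunov-Schmidt reduction to the perimeter functional on closed $q$-strings. At $\varepsilon=0$, the unperturbed boundary is the circle $\partial\Omega_0$ of some radius $R$, and orbits of rotation number $1/q$ correspond to regular inscribed $q$-gons: the functional
\[
L_q^{\varepsilon}(\varphi_1,\dots,\varphi_q) \;=\; \sum_{j=1}^{q} \ell_\varepsilon(\varphi_j,\varphi_{j+1}),\qquad \varphi_{q+1}:=\varphi_1,
\]
(with $\ell_\varepsilon$ the Euclidean chord length between boundary points) has, at $\varepsilon=0$, an entire $SO(2)$-orbit of critical points, parametrized by the starting angle $\varphi_0$ via $\varphi_j=\varphi_0+2\pi j/q$. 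The assumption that an integrable rational caustic $\Gamma^\varepsilon_{1/q}$ persists for all small $\varepsilon$ means precisely that this critical circle must deform smoothly into a one-parameter family of critical points of $L_q^\varepsilon$.

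Since the circular billiard map is a rigid rotation on each invariant curve, the Hessian of $L_q^{0}$ transverse to its critical $SO(2)$-orbit is nondegenerate (the standard twist computation). By the Morse-Bott form of the implicit function theorem, persistence of a smooth one-parameter family of critical points at first order in $\varepsilon$ is equivalent to the restriction of $L_q^{(1)}:=\partial_\varepsilon L_q^\varepsilon|_{\varepsilon=0}$ to the unperturbed critical circle being constant in $\varphi_0$. First I would write $\partial\Omega_\varepsilon$ in polar coordinates as $r=R+\varepsilon\,\mu(\varphi)+O(\varepsilon^2)$ and expand the chord length
\[
\ell_\varepsilon(\varphi_j,\varphi_{j+1}) \;=\; 2R\sin\!\left(\frac{\varphi_{j+1}-\varphi_j}{2}\right) + \varepsilon \sin\!\left(\frac{\varphi_{j+1}-\varphi_j}{2}\right)\bigl(\mu(\varphi_j)+\mu(\varphi_{j+1})\bigr) + O(\varepsilon^2).
\]
Summing over $j$ at the unperturbed regular orbit $\varphi_j=\varphi_0+2\pi j/q$ yields the clean formula
\[
L_q^{(1)}(\varphi_0) \;=\; 2\sin(\pi/q) \sum_{j=0}^{q-1} \mu\!\left(\varphi_0 + \frac{2\pi j}{q}\right).
\]

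Expanding $\mu(\theta) = \sum_{k\in\Z} a_k e^{ik\theta}$ and using the identity $\sum_{j=0}^{q-1} e^{2\pi i k j/q}=q\cdot\mathbf{1}_{q\mid k}$, the above becomes
\[
L_q^{(1)}(\varphi_0) \;=\; 2q\sin(\pi/q)\sum_{m\in\Z} a_{mq}\, e^{imq\,\varphi_0}.
\]
Constancy in $\varphi_0$ forces $a_{mq}=0$ for every $m\ne0$; the case $m=\pm 1$ is exactly the desired vanishing $\mu'_{q,\varphi_{1/q}}=\mu''_{q,\varphi_{1/q}}=0$ for the parametrization $\varphi_{1/q}$ aligned with the regular $q$-gon (for the circle, any polar-angle parametrization works, the one-parameter freedom being absorbed into the shift $\varphi_0$).

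The main technical point I would expect to dwell on is the rigorous justification of the Morse-Bott persistence step: one needs to verify that the nondegeneracy of the Hessian of $L_q^0$ transverse to the $SO(2)$-orbit allows one to identify, at leading order in $\varepsilon$, the obstruction to persistence of the whole critical circle with the constancy of $L_q^{(1)}$ along that circle. For the circular case this transversality follows at once from the twist property, and the ensuing Fourier argument is essentially algebraic; the scheme generalizes (with substantial modifications coming from elliptic action-angle expansions) to the genuinely perturbative ellipse setting that the rest of the paper addresses.
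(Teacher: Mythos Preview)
Your argument is correct and is essentially the classical Melnikov computation of Ram\'irez-Ros \cite{RR} that the paper is quoting; the paper itself does not give an independent proof of this theorem but cites it as a known result. The same mechanism---expanding the perimeter functional around the unperturbed $q$-gon, using that an integrable rational caustic forces the total length to be constant in the starting vertex, and then reading off the vanishing of the $q$-th Fourier mode via $\sum_{j=0}^{q-1}e^{2\pi ikj/q}=q\,\mathbf 1_{q\mid k}$---is exactly what underlies the paper's Lemma~\ref{rational-condition}, which is the quantitative (finite-$\varepsilon$, elliptic rather than circular) version of the same statement; compare your formula $L_q^{(1)}(\varphi_0)=2\sin(\pi/q)\sum_j\mu(\varphi_0+2\pi j/q)$ with \eqref{key1}--\eqref{const1}. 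One minor remark: your Morse--Bott/Lyapunov--Schmidt framing is sound but slightly heavier than necessary, since ``integrable rational caustic'' already means the perimeter is \emph{exactly} constant along the family of periodic orbits for each $\varepsilon$, so differentiating that identity at $\varepsilon=0$ gives the constancy of $L_q^{(1)}$ on the critical circle directly, without invoking transverse nondegeneracy of the Hessian.
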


Notice that in the case of circular billiards,
$\varphi_{1/q}(\theta)\equiv \theta$ for all $q>2$; 
however, this stops to be true away from the circle (see \eqref{action-para} for the more general 
case of elliptic billiards). \\

This more general framework
allows us to explain our strategy better.\medskip 

Let us now assume that the domains $\Om_\eps$ are $2$-rationally integrable for 
all sufficiently small $\eps$ and ignore for a moment dependence on the parametrization; 
then, the above theorem implies that $\mu'_k = \mu''_k = 0$ for $k > 2$, {\it i.e.},
\begin{align*}
  \mu(\varphi)&=\mu'_0+\mu'_1\cos \varphi+\mu_1''\sin \varphi
  + \mu'_2\cos 2\varphi+\mu_2''\sin 2\varphi\\
          &= \mu'_0+\mu_1^*\cos (\varphi-\varphi_1)+
            \mu_2^*\cos 2(\varphi-\varphi_2), 
\end{align*}
where $\varphi_1$ and $\varphi_2$ are appropriately 
chosen phases.
\begin{remark}\label{r_motionDescription}
  Observe that
  \begin{itemize}
  \item $\mu'_0$ corresponds to an homothety;
  \item {$\mu_1^* \cos (\varphi-\varphi_1)$ corresponds to a translation in the direction 
    forming an angle $\varphi_1$ with the polar semi-axis
    $\{\varphi = 0\}$;}
  \item {$\mu_2^*\cos 2(\varphi-\varphi_2)$ corresponds to a deformation of the disc into an ellipse
    of small eccentricity, whose major axis forms an angle $\varphi_2$ with the  polar semi-axis.}
  \end{itemize}
\end{remark}  This implies that, {infinitesimally} (as
  $\eps\to 0$), $2$-rationally integrable deformations of a
  circle are tangent to the $5$-parameter family of
  ellipses.

Observe that in principle in the above theorem one may
need to take $\eps\to0$ as $q\to\infty$.  {However, note that   
the cases we have to deal with correspond to $\eps > 0$  small, 
but not infinitesimal; hence, one cannot use directly the above
scheme to prove the result and a more elaborate strategy needs 
to be adopted. Let us  describe it more precisely.}

\subsection{The actual scheme of the proof of Theorem \ref{main-thm} for $q_0=2$.} 

\medskip

Let $ \cE_{e,c}$ be an ellipse of small eccentricity $e$ and 
 semi-focal distance $c$. Let $(\mu,\varphi)$ be the associated elliptic-coordinate frame. Any domain $\Om$ whose boundary is close to $\cE_{e,c}$, can be written in
 the elliptic-coordinate frame associated to $\cE_{e,c}$ as
\begin{align*}
  \partial\Om=\{( \mu_0+\mu(\varphi),\varphi):\varphi\in\mathbb{T}\},
\end{align*}
where $\mu(\varphi)$ is a (small) smooth $2\pi$-periodic function. 
The  strategy used in \cite{ADK}  proceeds as follows (keep 
in mind that the ellipse $\cE_{e.c}$ admits all integrable rational 
caustics of rotation number $1/q$ for $q > 2$).\medskip%

\noindent\textbf{Step 1:} Derive a quantitative necessary
condition for the preservation of an integrable rational caustic of a given rotation number
(see \cite[Theorem 3]{ADK} or  Lemma~\ref{rational-condition} below).\medskip%

\noindent\textbf{Step 2:} Define the {\it Deformed
Fourier modes} 
$$\{c_0,c_q,c_{-q}\}_{q>0}$$ 
associated to the ellipse $\cE_{e,c}$. They
satisfy the following
properties:
\begin{itemize}
\item {\it (Relation with Fourier Modes)} There exist (see \cite[Lemma~20]{ADK})  
$C^*(e,c)>0$ with $C^*(e{,c})\to0$ as $e\to0^+$, and a properly chosen 
parametrization of the boundary such that 
  $$
  \|c_0-1\|_{C^0}\leq C^*(e{,c})
  $$ 
  and for any $q\ge 1$
\begin{equation}\label{adk-basis1}
\begin{cases} 
\|c_q-\cos ( q\,\cdot)\|_{C^0}\, \leq \,q^{-1}\,C^*(e{,c})\\
\|c_{-q}-\sin ( q\,\cdot)\|_{C^0} \leq q^{-1}\,C^*(e{,c}).
\end{cases}\end{equation}
%\medskip 
\item (\emph{Transformations preserving integrability}) 
  The first five functions
\begin{align*}
  c_0,\,c_1,\,c_{-1},\,c_2,\,c_{-2}
\end{align*}
{correspond to infinitesimal generators of deformations preserving the class of ellipses: 
namely,  homotheties, translations and hyperbolic rotations about an arbitrary axis.}
\medskip 

\item ({\it Annihilation of inner products})
 Consider the one-\hskip0pt{}parameter 
family of domains $\Omega_{\varepsilon}$, $\varepsilon\in(-\varepsilon_0,\varepsilon_0)$, 
written in the elliptic-coordinate frame associated to the ellipse $\cE_{e,c}$,
\begin{align*}
  \partial\Om_\eps:=\cE_{e,c}+\eps \mu.
\end{align*}
For any $q > 2$,  if $\Om_\eps$ admits an integrable rational caustic
 of rotation number $1/q$ for all
sufficiently small $\eps$, then
\begin{align}
  \label{eq:vanishing}
  \langle \mu,c_q\rangle =0,\quad   \langle \mu,c_{-q}\rangle = 0,
\end{align}
where $\langle\cdot,\cdot\rangle$ is a suitably weighted $L^2$ inner
product. 

Notice that the functions $c_{\pm q}$ can be explicitly
defined using elliptic integrals via action-angle
coordinates.

\medskip 

\item (\emph{Linear independence and Basis property}) For sufficiently small
  eccentricities,  the
  functions $\{c_0,c_q, c_{-q}:\,q>0\}$ form a (non-orthogonal)  basis of $L^2(\mathbb{T})$.
\medskip
\end{itemize}

\noindent\textbf{Step 3} ({\it Approximation}):  Using the annihilation of 
the inner products, for the domain $\partial\Omega=\cE_{e,c}+\mu$ with 
small eccentricity $e$, one can find  another ellipse $\cE'$ such that 
$$\partial\Omega=\cE'+\mu'\quad \ \text{ and } \ 
\quad \|\mu'\|_{C^1}\leq \frac{1}{2}\|\mu\|_{C^1}.
$$ 
Applying this result to the best approximation of $\Omega$ by an ellipse and
then arguing by contradiction,  allow us  to conclude that $\Omega$ itself must be an ellipse.

\medskip 

\subsection{The adjusted scheme for the case $q_0>2$}
{Now we describe how to modify the above strategy to deal 
with the case $q_0>2$}. 

Fix an ellipse $\cE_{e,c}$ of eccentricity 
$e>0$ and  semi-focal distance $c$. In Section 
\ref{subsec:ellipticdynamics} we will introduce the action-angle 
coordinates associated to the billiard problem in $\cE_{e,c}$  
(it turns out that for $e=0$ these action-angle coordinates 
degenerate to the polar coordinates $(\rho,\theta)$).  
\medskip

\noindent\textbf{Step 1$'$:} For small $e>0$, we study the Taylor expansion, with respect to $e$, of 
the action-angle coordinates. Using 
this expansion, we derive the necessary condition for the preservation of 
 integrable rational caustics, in terms of the Fourier coefficients of the function $\mu$, 
up to the precision of order $e^{2N}$, for some positive integer $N=N(q_0)$. 
See Section \ref{NCFC} and  
equality \eqref{fourier1} for more details.
\medskip

\noindent\textbf{Step 2$'$:} {We define the {\it deformed Fourier 
modes} {$\{\cC_0, \;\cC_{q},\:\cC_{-q}\}_{q>0}$}, similarly to what 
described before.}  Fix some  $r\in\mathbb{N}$; these functions satisfy the following 
properties.
\medskip

\begin{enumerate}
\item {\it (Relation with Fourier mode)} {We have $\cC_0=1$,
$$\cC_q(\cdot)=\mathcal{V}_q(\cdot), \quad 
\cC_{-q}(\cdot)=\mathcal{V}_{-q}(\cdot), \quad 0< q\leq q_0,$$
and there exists $C^*(e)>$ with $C^*(e)\to0$ as $e\to0^+$, such that 
\[\begin{cases}\|\,\cC_q(\cdot)-\mathcal{V}_q(\cdot)\,\|_r\ \leq \ C^*(e)/q,\\
\|\cC_{-q}(\cdot)-\mathcal{V}_{-q}(\cdot)\|_r\leq C^*(e)/q,\end{cases} \quad 
q>q_0,\]
where $\|\cdot\|_{r}$ is the norm in the Sobolev space $H^r(\mathbb{T})$, 
and $\mathcal{V}_q$ are the zero average functions on $\mathbb{T}$, such that 
\[\begin{cases}\mathcal{V}^{(r)}_q(\cdot)=\ \ \cos ( q\;\cdot),& q>0\\
\mathcal{V}^{(r)}_q(\cdot)=-\sin ( q \; \cdot),&q<0,\end{cases}
\]
 where $\mathcal{V}_q^{(r)}$ denotes the $r$-th derivative of $\mathcal{V}_q$. The constant $C^*(e)$ here can be chosen as in \eqref{adk-basis1}.}
\medskip

\item {\it (Linear independence and Basis property)} For small eccentricities, the set of functions 
$\{\cC_0, \; \cC_q,\; \cC_{-q},\,q\in\mathbb{N}_+\}$ form a (non-orthogonal) 
basis of the Hilbert space $H^r$ (see Lemma \ref{basis-proof}).
\medskip

\item {\it (Annihilation of inner products)} From the existence of { integrable 
rational} caustics with rotation numbers $1/q$, 
$q>q_0$,  we deduce the following relations:
$$
\langle \mu, \cC_{\pm q }\rangle_r=O(q^7\|\mu\|_{C^1}^{1+\beta}),\quad \beta>0,
$$
where $\langle\cdot,\cdot\rangle_r$ is the inner product of the Hilbert space 
$H^r(\mathbb{T})$ (see Lemma \ref{key-lemma1}).
\end{enumerate}

\medskip 

Observe that since $q_0\geq3$, with respect to 
the previous scheme we have lost finitely many annihilation 
conditions:
\be \label{eq:vanishing} 
 \langle \mu,\cC_q\rangle_r =  \langle \mu,\cC_{-q}\rangle_r = 0,\quad  3\leq q\leq q_0.
\ee
{Hence, we need to find a way to recover them}. 
Our goal becomes then to show: 
\begin{align}
  \label{eq:2nd-vanishing}
 \langle \mu,\cC_q\rangle_r = O(e^2)\quad , \quad \langle \mu,\cC_{-q}\rangle_r = O(e^2),
  \quad3\leq q\leq q_0.
\end{align}

\medskip

{In particular, we manage to prove them in the following way.}\medskip

\begin{itemize}
\item {\bf Case $q_0=3$:}
We lose  a pair of conditions \eqref{eq:vanishing}, corresponding to  $q=3$. In Section \ref{case3} 
we study the necessary conditions for the existence of {integrable rational} caustics 
%$\hat\Gm^\eps_{1/7}$, $\hat \Gm^\eps_{1/5}$, and $\hat \Gm^\eps_{2/7}$ 
of rotation 
numbers $1/5,1/7,2/7$. We use the expansions, with respect to $e$,  of the resulting equalities, up to 
the precision $O(e^6)$, to derive a system of linear equations (see \eqref{linear-system-3}) 
for the $3^{rd}$, $5^{th}$, $7^{th}$ Fourier coefficients. Solving this linear system will provide
 us with \eqref{eq:2nd-vanishing} for $q=3$.\medskip

\item {\bf Case $q_0=4$:}
In this case we lose two pairs of conditions \eqref{eq:vanishing}, corresponding to  {$q=3,4$}. 
In Section \ref{case4} we derive \eqref{eq:2nd-vanishing} for $q=3,4$; this will be achieved in two steps:
{
\begin{itemize}
\item[-] To recover \eqref{eq:2nd-vanishing} for $q=3$, we study the  necessary conditions for the existence of 
{ integrable rational} caustics of  rotation numbers 
 $1/5,\,1/7,\,1/9, \,2/9$, written in terms of the Fourier coefficients of $\mu$, 
and considering their expansions, with respect to $e$, up to order $O(e^{8})$. We then  derive a linear system for 
the $3^{rd}$, $5^{th}$, $7^{th}$, $9^{th}$ Fourier coefficients, whose solution will provide us with  \eqref{eq:2nd-vanishing} for $q=3$.
\item[-]
To recover \eqref{eq:2nd-vanishing} for $q=4$,  we study the necessary  
conditions for the existence of { integrable rational } caustics of 
rotation numbers $1/6,\ 1/8,\ 1/10,\ 1/12,\ 1/14,\ 3/14$, which give rise to 
a system of linear equation for the $4^{th}$, $6^{th}$, $8^{th}$, $10^{th}$, $12^{th}$, $14^{th}$  
Fourier coefficients; similarly as above, the solution of this system will prove  \eqref{eq:2nd-vanishing} for $q=4$.\medskip
\end{itemize}
}

\item {{\bf Case $q_0=5$ and the general case:}}
{Along the same lines described in the previous two items, 
the case $q_0=5$ will be discussed in Section \ref{case5}.
Moreover, in Section \ref{case-general} we will outline 
a general (conditional) procedure to derive \eqref{eq:2nd-vanishing} for  any  $q_0\geq 6$; the implementation of 
this scheme is based on the assumption that  certain explicit 
non-degeneracy conditions for the corresponding linear systems hold (see Remarks \ref{remarkmatrixAodd} and \ref{remarkmatrixAeven}). 
}
\end{itemize}
\smallskip 

\noindent\textbf{Step 3$'$:} {Finally, once the previous steps 
are completed, we adapt the approximation arguments  
from \cite{ADK} and show that $\Omega$ must be an ellipse; see  Section \ref{main-proof} for more details.}\\

\medskip

 \section{Necessary conditions for the existence of a caustic with rational rotation number}\label{NCFC}

\subsection{Elliptic billiard dynamics and caustics} 
\label{subsec:ellipticdynamics}
Now we want to provide a more precise description of 
the billiard dynamics in $\cE_{e,c}$.
We rely on notations of Section \ref{secstrategyproof}. 
{In addition, we need the following notations.

Let $0\leq k<1$, we define  elliptic integrals and 
Jacobi Elliptic functions:
\begin{itemize}
\item Incomplete elliptic integral of the first kind:
\[F(\varphi;k):=\int_{0}^{\varphi}\frac{1}{\sqrt{1-k^2\sin^2\tau}}d\tau.\]
\item Complete elliptic integral of the first kind:
\[K(k):=F\left(\frac{\pi}{2};k\right).\]
\item Jacobi Elliptic functions are obtained by inverting 
incomplete elliptic integrals of the first kind. Precisely, if 
\[
u:=F(\varphi;k)=\int_0^{\varphi}\frac{1}{\sqrt{1-k^2\sin^2\tau}}\,d\tau,
\]
then we define
$$
\varphi:=\text{am}(u;k).
$$ 
%$\varphi=\text{am}(u;k)$, 
The Jacobi elliptic functions are given by:
\[\begin{split}&\sn(u;k):=\sin(\text{am}(u;k))=\sin(\varphi),\\
& \cn(u;k):=\cos(\text{am}(u;k))=\cos(\varphi).\end{split}\] \\
\end{itemize}
}

The following result has been proven in \cite{CF} (see also \cite[Lm. 2.1]{DCR}).\\

\begin{proposition}\label{prop1}
Let  $\l \in (0, b)$ and let
$$
k_\l^2:= \frac{a^2-b^2}{a^2-\l^2} \quad {\rm and} \quad
\d_\l := 2\, F( \arcsin (\l/b); k_\l).
$$
Let us denote, in cartesian coordinates, $q_\l(t) := (a\, \cn (t;k_\l), b\, \sn (t;k_\l))$.  
Then, for every $t\in [0, 4K(k_\l))$  the segment joining $q_\l(t)$ and $q_\l(t+\d_\l)$ 
is tangent to the caustic $C_\l$, defined in \eqref{caustic}.\medskip
\end{proposition}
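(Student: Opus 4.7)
The plan rests on two classical ingredients: the reflection property of confocal conics (which will imply that $C_\l$ is indeed a caustic), and the identification of action-angle coordinates for the elliptic billiard via Jacobi elliptic functions.

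First, I would record the Chasles/Graves reflection property for confocal conics: at any point $p \in \partial\cE_{e,c}$, the tangent line to $\partial\cE_{e,c}$ at $p$ bisects the angle between the two tangent lines from $p$ to the confocal conic $C_\l$. This immediately yields that the standard billiard reflection at $p$ sends any chord tangent to $C_\l$ to another chord tangent to $C_\l$, so that $C_\l$ is an invariant caustic and the induced map on the invariant circle of chords tangent to $C_\l$ is well-defined.

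Next, I would show that the parametrization $q_\l(t) = (a\cn(t;k_\l), b\sn(t;k_\l))$, which parametrizes $\partial\cE_{e,c}$ with period $4K(k_\l)$ since $\cn^2+\sn^2=1$, conjugates this induced map to the rigid rotation $t\mapsto t+\d_\l$. Parametrizing points on $C_\l$ by $(\sqrt{a^2-\l^2}\cos\phi,\sqrt{b^2-\l^2}\sin\phi)$, the tangent line at such a point is
\[
\frac{x\cos\phi}{\sqrt{a^2-\l^2}}+\frac{y\sin\phi}{\sqrt{b^2-\l^2}}=1,
\]
and $q_\l(t)$ lies on it iff
\[
\frac{a\cos\phi}{\sqrt{a^2-\l^2}}\,\cn(t;k_\l)+\frac{b\sin\phi}{\sqrt{b^2-\l^2}}\,\sn(t;k_\l)=1.
\]
Treated as an equation in $t$ for fixed $\phi$, this has two roots $t_1(\phi),t_2(\phi)$, corresponding to the two endpoints of the chord. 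Using the Jacobi addition formulas for $\sn$ and $\cn$ (or, more conceptually, Abel's theorem applied to the elliptic curve underlying the confocal pencil of conics), one verifies that the difference $t_2(\phi)-t_1(\phi)$ is a constant $\d_\l$, independent of $\phi$; this gives the tangency claim for every $t$.

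To pin down the value of $\d_\l$ explicitly, I would evaluate the tangency equation on one convenient chord, for instance the horizontal tangent to $C_\l$ at the top vertex $(0,\sqrt{b^2-\l^2})$, whose two endpoints on $\partial\cE_{e,c}$ are symmetric and have parameters expressible directly through the incomplete elliptic integral $F(\,\cdot\,;k_\l)$ via the definitions $\sn(u)=\sin(\text{am}(u))$ and $\cn(u)=\cos(\text{am}(u))$. Subtracting these parameter values yields the formula $\d_\l = 2F(\arcsin(\l/b);k_\l)$.

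The main obstacle I expect is the linearization step: verifying that $t_2(\phi)-t_1(\phi)$ is independent of $\phi$ is the nontrivial content of the statement. Performed by hand it is a somewhat delicate manipulation of the $\sn$ and $\cn$ addition formulas; performed conceptually it is the translation-invariance coming from the elliptic-curve uniformization of the confocal family, and it is the same phenomenon that underlies Poncelet's closure theorem. A reference to \cite{CF} (or to the analogous computation in \cite{DCR}) would justify this step, so only a concise derivation or citation is required.
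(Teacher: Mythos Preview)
The paper does not actually prove this proposition: immediately before the statement it says ``The following result has been proven in \cite{CF} (see also \cite[Lm.~2.1]{DCR})'' and gives no argument of its own. Your sketch is therefore more than the paper provides; it is a reasonable outline of the argument in those references (Chasles/Graves reflection plus the Jacobi-elliptic linearization that underlies the Poncelet phenomenon), and you correctly isolate the nontrivial step---constancy of $t_2(\phi)-t_1(\phi)$---and point to the same citations the paper uses.

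One small caution on your evaluation step: the horizontal tangent at the top vertex of $C_\l$ meets $\partial\cE_{e,c}$ at points with $\cn(t)=\pm\l/b$, which gives the difference $2K(k_\l)-2F(\arccos(\l/b);k_\l)$ rather than $2F(\arcsin(\l/b);k_\l)$ directly; showing these coincide requires an extra identity. A cleaner special chord is the one emanating from the right vertex $q_\l(0)=(a,0)$ (or an explicit addition-formula computation as in \cite{CF}), but since you intend to cite \cite{CF,DCR} for this step anyway, the point is minor.
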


Observe that:
\begin{itemize} 
\item $k_\l$ is a strictly increasing function of $\l \in (0,b)$; in particular 
$k_\l \rightarrow e$ as $\l \rightarrow 0^+$, while $k_\l \rightarrow1$ as $\l\rightarrow b^-$.
Observe that $k_\l$ represents the eccentricity of the ellipse $C_\l$.
\item
$\delta_\l$ is also a strictly increasing function of $\l \in (0,b)$; in fact, 
$F(\f;k)$ is clearly strictly increasing in both $\f$ and $k \in [0,1)$. Moreover, 
$\d_\l \rightarrow 0$ as $\l\rightarrow 0^+$, and $\d_\l \rightarrow +\infty$ as $\l\rightarrow b^-$.
\end{itemize}

%\begin{remark}
%Using  elliptic polar coordinate, one can  easily check that 
%$\tanh^2 \mu =  1 - \frac{a^2-b^2}{a^2-\l^2}$ and therefore
%\begin{eqnarray*}
%k(\mu) =  \sqrt{1- \tanh^2 \mu}  = \frac{1}{\cosh \mu},
%\end{eqnarray*}
%which is exactly the eccentricity of the confocal ellipse of parameter $\mu$.\medskip
%\end{remark}

\bigskip

Let us now consider the parametrization of the boundary induced by the dynamics corresponding to  the caustic $C_\l$:
\begin{eqnarray*} 
Q_\l : \R/2\pi\Z &\longrightarrow& \R^2 \nonumber \\
\theta &\longmapsto& q_\l \left(\frac{4K(k_\l)}{2\pi} \, \theta \right). \nonumber
\end{eqnarray*}

We define the {\it rotation number} associated to the caustic $C_\l$ to be
\begin{equation}\label{rot-func}
\omega(\l,e):= \frac{\d_\l}{4K(k_\l)} = 
\frac{F(\arcsin (\l/b); k_\l)}{2K(k_\l)}.
\end{equation}
In particular $\omega(\l,e)$ is strictly increasing in $\l$ and $\omega(\l,e) \longrightarrow 0$ as 
$\l\rightarrow 0^+$, while $\omega(\l,e) \rightarrow \frac{1}{2}$ as $\l\rightarrow b^-$.
In addition, for every $\theta\in \T$, 
the orbit starting at $Q_\l(\theta)$ and tangent to $C_\l$, hits the boundary at 
$Q_\l(\theta + 2\pi\, \omega_\l)$. 
\medskip

Denote the inverse of the function $\omega(\lambda,e)$, by 
$\lambda_{\omega}=\lambda(e,\omega).$  Notice  that {in the Taylor expansion of
$\omega(\lambda,e)$ only  even powers of $e$ appear,  hence the same holds for 
$\lambda(e,\omega)$.} Moreover, 
$$
\omega(\lambda,0)=\frac{\arcsin(\lambda/b)}{\pi},
$$  
and { it is straightforward to show that } the following estimate holds.

\begin{lemma} \label{rot-no1}There exists  $C>0$ such that for each $e\in[0,\frac{1}{2}]$ 
and $\omega\in(0,1/2)$, we have
$$|\lambda(e,\omega)-b\sin \omega \pi|\leq Ce^2.$$
\end{lemma}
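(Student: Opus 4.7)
The key observation, already highlighted in the paragraph preceding the statement, is that
$k_\lambda^2 = \frac{a^2 - b^2}{a^2 - \lambda^2} = \frac{a^2 e^2}{a^2 - \lambda^2}$
depends on $e$ only through $e^2$. Since the elliptic integrals $F(\phi; k)$ and $K(k)$ involve $k$ only through the integrand $(1 - k^2 \sin^2 \tau)^{-1/2}$, the function $\omega(\lambda, e)$ defined in \eqref{rot-func} is real-analytic in $e^2$ around $e = 0$ for each fixed $\lambda \in (0, a)$. At $e = 0$ one has $b(0) = a$ and $k_\lambda \equiv 0$, so $\omega(\lambda, 0) = \arcsin(\lambda/a)/\pi$, whose inverse is $\lambda(0, \omega) = a \sin \omega\pi$. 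Because $\partial_\lambda \omega > 0$ throughout $(0, b(e))$, the implicit function theorem gives the expansion $\lambda(e, \omega) = a \sin\omega\pi + e^2 \lambda_1(\omega) + O(e^4)$ at each interior $\omega$; combining this with $b(e) = a \sqrt{1-e^2} = a - \tfrac{a}{2} e^2 + O(e^4)$ already yields $\lambda(e, \omega) - b(e) \sin\omega\pi = O(e^2)$ pointwise in $\omega \in (0, 1/2)$.

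To upgrade this to a uniform estimate, my plan is to substitute $\lambda_* := b(e)\sin\omega\pi$ directly into \eqref{rot-func}. Using $\arcsin(\lambda_*/b) = \omega\pi$ one computes
\[
k_*^2 := k_{\lambda_*}^2 = \frac{e^2}{\cos^2 \omega\pi + e^2 \sin^2 \omega\pi},
\]
and term-by-term expansion of the elliptic integrals in powers of $k_*^2$ gives the leading correction
\[
\omega(\lambda_*, e) - \omega = -\frac{k_*^2 \sin \omega\pi \cos \omega\pi}{4\pi} + (\text{higher order}).
\]
The algebraic identity $k_*^2 \cos^2 \omega\pi = e^2 - e^2 k_*^2 \sin^2 \omega\pi \leq e^2$ is what makes the proof work: although $k_*^2$ itself can approach $1$ near $\omega = 1/2$, every product of the form $k_*^{2n} \cos^{2n}\omega\pi$ is bounded by $e^{2n}$. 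Since $\partial_\lambda \omega$ simultaneously blows up like $1/\cos\omega\pi$ in this regime (as one sees from $\partial_\omega \lambda|_{e=0} = a\pi\cos\omega\pi$), applying the mean value theorem to the monotonic function $\omega(\cdot, e)$ transfers the cancellation to the $\lambda$-variable and yields $|\lambda(e, \omega) - b(e)\sin\omega\pi| \leq C e^2$ uniformly on $(0, 1/2)$. As a sanity check, the difference vanishes identically at both endpoints, since $\lambda(e, 0) = 0 = b\sin 0$ and $\lambda(e, 1/2) = b(e) = b\sin(\pi/2)$.

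The hardest step will be a clean uniform control of the higher-order terms in the series $F(\omega\pi; k_*) - 2\omega K(k_*) = \sum_{n \ge 1} c_n k_*^{2n} [I_n(\omega\pi) - 2\omega I_n(\pi/2)]$ with $I_n(\phi) := \int_0^\phi \sin^{2n}\tau\, d\tau$, because the naive $k^{2n}$-expansion loses absolute convergence as $k_* \to 1$. The workaround is that the bracket $I_n(\omega\pi) - 2\omega I_n(\pi/2)$ vanishes at $\omega = 1/2$, so Taylor expansion around that point extracts an extra factor of $\cos\omega\pi$; iterating the bound $k_*^2 \cos^2\omega\pi \leq e^2$ then gives uniform $O(e^{2n})$ control on each term and makes the series estimate rigorous. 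An alternative is to invoke the Landen transformation, relating $K(k_*)$ near $k_* = 1$ to $K$ at a small modulus, thereby bypassing the singular regime entirely.
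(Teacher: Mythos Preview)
The paper does not actually prove this lemma: the sentence immediately preceding it simply says ``it is straightforward to show that the following estimate holds.'' Your proposal therefore goes well beyond what the paper provides, and your overall strategy---substituting $\lambda_* = b\sin\omega\pi$ into \eqref{rot-func}, estimating $\omega(\lambda_*,e)-\omega$, and converting back via the mean value theorem using $\partial_\omega\lambda = O(\cos\omega\pi)$---is the right one. The identity $k_*^2\cos^2\omega\pi \leq e^2$ is precisely the cancellation that makes the leading term work.

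There is, however, a genuine gap in your treatment of the higher-order terms. The bracket $I_n(\omega\pi) - 2\omega I_n(\pi/2)$ vanishes only to \emph{first} order at $\omega = 1/2$ (its derivative there equals $\pi - 2I_n(\pi/2)\neq 0$), so it contributes a \emph{single} factor of $\cos\omega\pi$, uniformly in $n$; you cannot ``iterate'' $k_*^2\cos^2\omega\pi\le e^2$ to get $O(e^{2n})$ term by term. A correct route is to sum the series in closed form: since $|I_n(\omega\pi) - 2\omega I_n(\pi/2)| \leq \pi\cos\omega\pi$ uniformly in $n$ and $\sum_{n\ge 1}\binom{2n}{n}4^{-n}k_*^{2n} = (1-k_*^2)^{-1/2}-1$, your formula for $1-k_*^2$ gives
\[
|F(\omega\pi;k_*) - 2\omega K(k_*)| \;\le\; \frac{\pi e^2}{(1-e^2)\cos\omega\pi},
\]
hence $|\omega(\lambda_*,e)-\omega|\le C e^2/\cos\omega\pi$. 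The mean-value step then closes the argument provided $\cos\tilde\omega\pi\le C\cos\omega\pi$ at the intermediate point, which holds whenever $\cos\omega\pi\gtrsim e$; the thin boundary layer $|\omega-\tfrac12|\lesssim e$ still needs a separate (elementary) treatment using that both $\lambda(e,\omega)$ and $b\sin\omega\pi$ tend to $b$ there. For the paper's purposes this last refinement is moot: every rotation number to which the lemma is applied is bounded well away from $1/2$.
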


\medskip

We want to write the boundary parametrization induced by the caustic $C_\l$, 
expressed in elliptic coordinates $(\mu, \f)$, namely determine the function 
$S_\l(\theta)=(\mu_\l(\theta),\f_\l(\theta)) = (\mu_0,\f_\l(\theta))$ such that 
the orbit starting at $S_\l(\theta)$ (in elliptic coordinates) and tangent to $C_\l$, 
hits the boundary at $S_\l(\theta + 2\pi\, \omega_\l)$.

It is easy to deduce  from the above expression that
\begin{equation}\label{action-para}
\varphi_{\lambda}(\theta)=\varphi(\theta,\lambda,e) := \am \left( \frac{4K(k_\l)}{2\pi}\,\theta ; k_\l \right).
\end{equation}
Therefore, we have 
$S_\l(\theta)=\left(\mu_0, \am \left( \frac{4K(k_\l)}{2\pi}\,\theta; k_\l \right) \right)$.  
The parametrization given in  \eqref{action-para} is called the {\it action-angle 
parametrization} of the boundary, associated to the caustic $C_{\lambda}$. 

Below, if we need to emphasize the rotation number of the associated caustic,  
we will write $C_{\lambda_\omega}$.
\medskip

Fix a positive integer $m$  and consider a perturbation of 
the ellipse~$\cE_{e,c}$, denoted $\Omega_\e$, {\it i.e.}, in the elliptic coordinates
$$
\partial \Omega_\e=\{(\mu,\varphi):\; \mu={\mu_0+}\mu_{\varepsilon}(\varphi),\;\varphi\in[0,2\pi)\},
$$
where $\mu_{\varepsilon}(\varphi)$ is a $2\pi$-periodic  $C^m$-function with 
$\|\mu_{\varepsilon}\|_{C^m}\leq M$ and $\|\mu_{\varepsilon}\|_{C^1}\le \varepsilon$, 
{for some sufficiently small $\e>0$}.\\

 \begin{lemma}\label{rational-condition}
For any rational number $p/q\in(0,1/2)$ in  lowest terms, 
if the billiard inside $\Omega_\e$ admits 
{an integrable rational caustic $C^{\e}_{\lambda_{p/q}}$}
%\textb{an integrable} convex caustic $C^{\e}_{p/q}$ 
of rotation number $p/q$,  then there exist constants  
$c_{p/q}$ and $C=C(m)$ such that
\begin{equation}\label{con1}
\lambda_{p/q}
\sum_{k=1}^q
\mu_{\varepsilon}(\varphi_{\lambda_{p/q}}
(\theta+\frac{kp}{q}2\pi))=
c_{p/q}+\Upsilon_{p/q}({\theta}), 
\end{equation}
where\footnote{We drop
dependence on $\varepsilon$ in the notations.} 
$\Upsilon_{p/q}\in C^{m-1}(\mathbb{T})$ and
$$
\|\Upsilon_{p/q}\|_{C^{m-1}}\leq Cq \qquad
\|\Upsilon_{p/q}\|_{C^0}\leq Cq^7\|\mu_{\varepsilon}\|_{C^1}^2.
 $$
\end{lemma}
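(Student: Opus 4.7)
\medskip

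\textbf{Proof plan.} The strategy mirrors that of \cite[Theorem 3]{ADK}, with the adjustments needed to work in the elliptic-coordinate framework. The key geometric fact is that the presence of an integrable rational caustic of rotation number $p/q$ forces a whole $\theta$-family of inscribed $q$-periodic polygons to share a common perimeter. Taylor-expanding this perimeter in $\mu_\eps$ then yields the identity \eqref{con1}, the linear part being the left-hand side and the remainder being $\Upsilon_{p/q}$.

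First, for the unperturbed ellipse $\cE_{e,c}$, I would recall that every $q$-periodic orbit tangent to the caustic $C_{\lambda_{p/q}}$ has the same perimeter $L_0$; its vertices are $S_{\lambda_{p/q}}(\theta + \tfrac{kp}{q}2\pi)$ for $k=0,\dots,q-1$, as $\theta$ varies over $\T$. This is a standard consequence of integrability of the elliptic billiard, or equivalently of the characterization of rational caustics as level sets of the Mather $\beta$-function. Now assume $\Om_\eps$ admits an integrable rational caustic $C^\eps_{\lambda_{p/q}}$ of rotation number $p/q$. Then for each $\theta$ there is a $q$-periodic billiard orbit in $\Om_\eps$ whose vertices are $O(\|\mu_\eps\|_{C^1})$-close to the unperturbed ones; since all these orbits lie on the same (necessarily action-constant) invariant curve, their common perimeter $L^\eps(\theta)$ is independent of $\theta$, and I would set $c_{p/q}:=L^\eps(\theta)$.

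The next step is to compute $L^\eps(\theta)$ to first order in $\mu_\eps$. Writing each perturbed vertex as a radial shift $\mu_\eps(\varphi)$ of the unperturbed vertex plus a tangential shift $\delta\varphi_k$ (determined by the perturbed reflection law), and summing the resulting chord-length variations, the tangential contributions cancel: at the unperturbed orbit the reflection law \eqref{genfunctbill} is precisely the critical-point condition of the perimeter with respect to tangential vertex shifts. What survives is a sum of radial-direction terms of the form $\sum_k \rho_\lambda(\varphi_k)\,\mu_\eps(\varphi_k)$. A direct computation in the $(\mu,\f)$-frame, exploiting that every chord is tangent to $C_{\lambda_{p/q}}$ and the explicit form of the metric in elliptic coordinates, identifies $\rho_\lambda$ with $\lambda_{p/q}$ (up to a $k$-independent normalization absorbed into $c_{p/q}$). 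This produces the linear part of \eqref{con1} and isolates $\Upsilon_{p/q}$ as the higher-order Taylor remainder.

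The main obstacle is the quantitative control of $\Upsilon_{p/q}$. The $C^{m-1}$ bound of order $q$ reflects the fact that, upon differentiating the sum of $q$ chord-length contributions, each derivative amplifies the norm at most linearly in $q$, uniformly in $\|\mu_\eps\|_{C^m}\le M$. The more delicate $C^0$ estimate $\|\Upsilon_{p/q}\|_{C^0}\le Cq^7\|\mu_\eps\|_{C^1}^2$ follows by controlling the Hessian of the perimeter functional; this Hessian involves inverse powers of the reflection angles (which scale like $1/q$ near the boundary of rotation numbers) and of $\lambda_{p/q}$, and it is precisely here that the exponent $7$ originates. I would import these estimates directly from the $C^1$-analysis of \cite{ADK} (as highlighted in Remark (iii) following Theorem \ref{main-thm2}), verifying only that the change from the circular-coordinate setup there to the elliptic coordinates $(\mu,\f)$ here does not introduce additional growth in $q$---a check reducing to the fact that the Jacobians relating the two parametrizations are smooth and bounded uniformly in the rational rotation number under consideration.
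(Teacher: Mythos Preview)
Your plan is essentially the paper's own argument: compare the perturbed periodic orbit to the unperturbed one, use constancy of the perimeter on the integrable caustic, Taylor-expand, invoke criticality of the unperturbed orbit to kill first-order tangential terms, and import the $q$-dependent $C^0$ and $C^{m-1}$ bounds from \cite{ADK}. The one place where the paper is more specific than you is the identification of the linear radial term: rather than a direct elliptic-coordinate computation, the paper quotes \cite[Proposition~4.1]{PR}, which gives exactly $\sum_k h_1(\varphi_k,\varphi_{k+1})=2\lambda_{p/q}\sum_k\mu_\eps(\varphi_k)$; and for the vertex-shift control it uses the sharper quadratic estimate $|\varphi_k-\varphi_k^\eps|\le Cq^3\|\mu_\eps\|_{C^1}^2$ from \cite[Lemma~5]{ADK} (comparing two maximal $(p,q)$-gons) rather than a linear bound plus a Hessian argument.
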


 \medskip

\brm 
We need the higher regularity estimates from this lemma to prove Lemma \ref{key-lemma1}. \\
\erm 
 
 \begin{proof}
 Let $\varphi_{k}$, $k=0,\dots, q-1$, be the vertices of 
 the maximal $(p,q)$-gon inscribed in the ellipse $\cE_{e,c}$,
 tangent to the caustic $C^0_{\l_{p/q}}$ of the billiard map in 
 $\cE_{e,c}$, {with $\varphi_0$ being $\theta$-dependent.}  
 Let $\varphi^{\varepsilon}_{k}$, $k=0,\dots,q-1$, be 
 the vertices of the maximal $(p,q)$-gon inscribed in $\Omega_\e$, 
tangent to the caustic $C^{\e}_{\l_p/q}$, with $\varphi^{\varepsilon}_0=\varphi_0$. Then, by Lemma 5 in 
\cite{ADK}, we have that there exists $C>0$, independent 
of $q$, such that
 \begin{equation}\label{bound1}
 |\varphi_k-\varphi_k^{\varepsilon}|\leq Cq^3\|\mu_{\varepsilon}\|^2_{C^1},\quad k=0,\dots, q-1.
 \end{equation}
 For $\|\mu_{\epsilon}\|_{C^1}$ small enough,  the generating function of the billiard dynamics inside $\Omega_{\varepsilon}$ is given by
 $$h_{\varepsilon}(\varphi,\varphi')=h_0(\varphi,\varphi')+h_1(\varphi,\varphi')+h_2(\varphi,\varphi'),
 $$
 where $h_0(\varphi,\varphi')$ is the generating function of the billiard dynamics inside the ellipse 
 $\cE_{e,c}$ and
 $$\|h_1(\varphi,\varphi')\|_{C^1}\leq 2\|\mu_{\varepsilon}\|_{C^1} \qquad
 \|h_2(\varphi,\varphi')\|_{C^0}<C\|\mu_{\varepsilon}\|_{C^1}^2,
 $$
 and
 $$\|h_1\|_{C^m}+\|h_2\|_{C^m}\leq C.$$
 Using \cite[Proposition 4.1]{PR}, we deduce
\begin{equation}\label{key1}
\sum_{k=0}^{q-1}h_1(\varphi_k,\varphi_{k+1})=2\lambda_{p/q}\sum_{k=0}^{q-1}\mu_{\varepsilon}(\varphi_k).
\end{equation}
By the existence of { an integrable rational }
caustic with rotation number $p/q$ for the billiard dynamics inside $\Omega_\eps$, we have
\begin{equation}\label{const1}L(\varphi):=\sum_{k=0}^{q-1}h_{\varepsilon}(\varphi_k^{\varepsilon},\varphi_{k+1}^{\varepsilon})=const., \quad \mbox{{where $\varphi^\varepsilon_0=\varphi$}}.\end{equation}
Since
\[\begin{split}
&\ \ \ \sum_{k=1}h_0(\varphi_k^{\varepsilon},\varphi_{k+1}^{\varepsilon})=\sum_{k=0}^{q-1} \big[h_0(\varphi_k^{\varepsilon},\varphi_{k+1}^{\varepsilon})-h_0(\varphi_k,\varphi_{k+1})+h_0(\varphi_k,\varphi_{k+1})\big] \\
&=\sum_{k=1}^{q-1}\big[(\partial_1h_0(\varphi_{k},\varphi_{k+1})+\partial_2h_0(\varphi_{k-1},\varphi_k))(\varphi_k^{\varepsilon}-\varphi_k)+h_0(\varphi_k,\varphi_{k+1})
\\
& \qquad +\;O(|\varphi_k^{\varepsilon}-\varphi_k|^2)\big] \\
&=\sum_{k=0}^{q-1}h_0(\varphi_k,\varphi_{k+1})+\Upsilon_{p/q}^0,
\end{split}\]
with $$\|\Upsilon_{p/q}^0\|_{C^0}=O(q^7\|\mu_{\varepsilon}\|_{C^1}^2),\quad \|\Upsilon_{p/q}^0\|_{C^m}\leq q\,C_0(m),$$
and
$$
\sum_{k=0}^{q-1}h_1(\varphi_k^{\varepsilon},\varphi_{k+1}^{\varepsilon})=
\sum_{k=0}^{q-1}h_1(\varphi_k,\varphi_{k+1})+\Upsilon_{p/q}^1,$$
with $$\|\Upsilon_{p/q}^1\|_{C^0}=O(q^7\|\mu_{\varepsilon}\|_{C^1}^2),\quad \|\Upsilon_{p/q}^1\|_{C^{m-1}}\leq q\, C_1(m),$$
using  the fact that 
$$\varphi_k =\varphi_{\lambda_{p/q}}\left(\theta+\frac{pk}{q}2\pi\right),\quad k=0,\dots, q-1,$$
 the assertion of the lemma follows from \eqref{key1} and \eqref{const1}.
 \end{proof}
 
 \medskip
 Let us consider the Fourier series of $\mu_{\varepsilon}(\varphi)$,
 \[
 \mu_{\varepsilon}(\varphi)=\mu'_0+
 \sum_{k=1}^{+\infty}a_k\cos (k\varphi) +b_k\sin (k\varphi).
 \]
 Now substitute into $\mu_{\varepsilon}(\varphi)$ the action-angle parametrization 
 $\varphi=\varphi_{\lambda}(\theta)$, and expand it with respect to the eccentricity $e$. {By Lemma \ref{lm:mu-expansion}}, we obtain, 
 for any positive integer $N\in \mathbb{N}$, $N\leq m-1$, that
 \begin{equation}\label{expan1}
 \mu_{\varepsilon}(\varphi(\theta,\lambda,e))=\mu_{\varepsilon}(\theta)+\sum_{n=1}^N P_n(\theta)\frac{a^ne^{2n}}{(a^2-\lambda^2)^n}+O(\|\mu_{\varepsilon}\|_{C^{N+1}}e^{2N+2}),\end{equation}
 where
 $$
 P_n(\theta)=\sum_{k=1}^{+\infty}\sum_{l=-n}^{n}\xi_{n,l}(k)\big(a_k\cos((k+2l)\theta) +b_k\sin((k+2l)\theta)\big),
 $$
and  $\xi_{n,l}(k)$ are polynomials in $k$ (see Appendix 
\ref{e-expansion-sec}). Let us  now recall  the following elementary 
identities {(we leave the proof to the reader)}.
\begin{lemma}
Let  $0<p/q\in\mathbb{Q}$ in  lowest terms. \  If 
 $ n\in\mathbb{N}\setminus q\mathbb{N}$, then
{\small $$
\sum_{m=1}^q\cos \left(n\left(\theta+\frac{pm}{q}2\pi\right)\right)\equiv 0\quad 
 \mbox{and} \quad
 \sum_{m=1}^q\sin\left(n\left(\theta+\frac{pm}{q}2\pi\right)\right)\equiv 0.
$$
}
If 
$n\in q\mathbb{N}$, then
{\small $$\sum_{m=1}^q\cos\left(n\left(\theta+\frac{pm}{q}2\pi\right)\right)\equiv 
q\cos n\theta \ \ 
,
\ \  \sum_{m=1}^q\sin\left(n\left(\theta+\frac{pm}{q}2\pi\right)\right)\equiv 
q\sin n\theta.$$
}
\end{lemma}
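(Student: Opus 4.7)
The plan is to reduce both trigonometric sums to a single complex-exponential geometric series. Setting $\zeta := e^{2\pi i p n / q}$ and applying Euler's identity gives
\[
\sum_{m=1}^q \Bigl[ \cos\bigl(n\theta + \tfrac{2\pi p n m}{q}\bigr) + i\sin\bigl(n\theta + \tfrac{2\pi p n m}{q}\bigr)\Bigr] \;=\; e^{in\theta}\sum_{m=1}^q \zeta^m,
\]
so that extracting real and imaginary parts will yield both claimed identities simultaneously. In particular, it suffices to show that the complex sum equals $q\,e^{in\theta}$ when $q\mid n$ and vanishes otherwise.

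The second step is to evaluate $\sum_{m=1}^q \zeta^m$ by a standard case split on whether $\zeta = 1$. If $\zeta = 1$, the sum is trivially $q$. Otherwise it equals $\zeta\cdot\dfrac{\zeta^q-1}{\zeta-1}$, and since $\zeta^q = e^{2\pi i p n} = 1$ (as $pn\in\mathbb{Z}$), the numerator vanishes and the sum is $0$. Thus the right-hand side above is either $q\,e^{in\theta}$ or $0$, according to whether $\zeta = 1$ or not.

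The third step is to translate the dichotomy $\zeta = 1$ versus $\zeta \neq 1$ into the two cases of the lemma. The condition $e^{2\pi i p n/q} = 1$ is equivalent to $q \mid p n$; since the hypothesis ``$p/q$ in lowest terms'' means $\gcd(p,q) = 1$, Euclid's lemma reduces this to $q \mid n$, i.e. $n \in q\mathbb{N}$. Combining with the previous step and taking real/imaginary parts yields $q\cos n\theta$ and $q\sin n\theta$ when $n \in q\mathbb{N}$, and $0$ in both sums when $n\in\mathbb{N}\setminus q\mathbb{N}$, as claimed.

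There is no genuine obstacle in this argument; it is a one-page calculation that the authors deliberately leave to the reader. The only subtlety worth recording is the use of coprimality in the divisibility step $q\mid pn \iff q\mid n$, which is precisely where the hypothesis that $p/q$ is written in lowest terms enters.
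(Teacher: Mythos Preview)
Your argument is correct and is exactly the standard computation the authors intend when they write ``we leave the proof to the reader''; there is nothing to compare, since the paper gives no proof of its own. The one point worth noting is that you have correctly identified where the lowest-terms hypothesis is used, namely in the step $q\mid pn \Longleftrightarrow q\mid n$.
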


\medskip

If we apply the above equalities to \eqref{con1} and \eqref{expan1}, we obtain
\[\begin{split}&\sum_{j=1}^{+\infty}a_j\cos (jq\,\theta) +b_j \sin (jq\,\theta)\\
&+\sum_{n=1}^N\sum_{l=-n}^{n}\xi_{n,l}(jq-2l)\Big(a_{jq-2l}\cos (jq\,\theta) +b_{jq-2l}\sin (jq\,\theta)\Big)\frac{a^ne^{2n}}{(a^2-\lambda_{p/q}^2)^n}\\
&=O(\|\mu_{\varepsilon}\|_{C^{N+1}}e^{2N+2}+\lambda_{p/q}^{-1}q^7\|\mu_{\varepsilon}\|_{C^1}^2)+\frac{c_{p/q}}{q}-\mu'_0.\end{split}\]

Multiplying both sides by $\cos (q\,\theta)$ and integrating with respect to $\theta$ from $0$ to $2\pi$, we get the following proposition.\medskip

\begin{proposition} {Let $0<p/q \in \Q\cap (0,1)$ and  assume that $\Omega$ admits an integrable rational 
caustic of rotation number $p/q$. }Let  $N\in \N$ such that $q>2N$. Then:
\begin{equation}\label{fourier1}\begin{split}
&a_q+\sum_{n=1}^N\ 
\sum_{|l|\le n}\xi_{n,l}(q-2l) \,a_{q-2l}\,\frac{a^ne^{2n}}{(a^2-\lambda_{p/q}^2)^n} \\
&=O(e^{2N+2}\|\mu_{\varepsilon}\|_{C^{N+1}}+
\lambda_{p/q}^{-1}q^7\|\mu_{\varepsilon}\|_{C^1}^2).\end{split}\end{equation}
\end{proposition}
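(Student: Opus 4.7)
The strategy is to combine the pointwise necessary condition \eqref{con1} of Lemma~\ref{rational-condition} with the eccentricity expansion \eqref{expan1} and the orbit-average identities of the preceding lemma, and then read off the $\cos(q\theta)$ Fourier coefficient. The argument is essentially a bookkeeping exercise once these three ingredients are in place. First I would divide \eqref{con1} by $q\lambda_{p/q}$ to get
\[
\frac{1}{q}\sum_{m=1}^q\mu_{\varepsilon}\!\left(\varphi_{\lambda_{p/q}}\!\left(\theta+\tfrac{2\pi pm}{q}\right)\right)=\frac{c_{p/q}}{q\lambda_{p/q}}+\frac{1}{q\lambda_{p/q}}\Upsilon_{p/q}(\theta),
\]
and for each $m$ apply \eqref{expan1} (with $\theta$ replaced by $\theta+2\pi pm/q$) to write each summand as $\mu_{\varepsilon}(\theta+2\pi pm/q)+\sum_{n=1}^N P_n(\theta+2\pi pm/q)\,\frac{a^n e^{2n}}{(a^2-\lambda_{p/q}^2)^n}+O(e^{2N+2}\|\mu_{\varepsilon}\|_{C^{N+1}})$.

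Next, I would expand $\mu_{\varepsilon}$ in its Fourier series and substitute the explicit form of $P_n$; every resulting harmonic has the shape $\cos((k+2l)(\theta+2\pi pm/q))$ or $\sin((k+2l)(\theta+2\pi pm/q))$ with $k\geq 1$ and $|l|\leq n$. The elementary orbit-average identities just established annihilate any such harmonic unless $k+2l\in q\mathbb{N}$, i.e.\ $k=jq-2l$ for some $j\geq 1$. The hypothesis $q>2N$ is crucial here: for $|l|\leq n\leq N$ one has $|2l|<q$, which ensures both that $q-2l\geq 1$ is a legitimate Fourier index and that aliasing from frequencies $k+2l<0$ is impossible (the latter would require $k<0$). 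After averaging, the left-hand side becomes a double sum over $j\geq 1$ and $n\geq 0$ of coefficients times $\cos(jq\theta)$ or $\sin(jq\theta)$.

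Finally, I would project onto $\cos(q\theta)$ by multiplying by $\cos(q\theta)$ and integrating over $[0,2\pi]$; orthogonality of the trigonometric system kills every $j\geq 2$ contribution and extracts precisely $a_q+\sum_{n=1}^N\frac{a^n e^{2n}}{(a^2-\lambda_{p/q}^2)^n}\sum_{|l|\leq n}\xi_{n,l}(q-2l)\,a_{q-2l}$ on the left. The constant $c_{p/q}/(q\lambda_{p/q})$ contributes nothing to this projection, while the projection of $\Upsilon_{p/q}(\theta)/(q\lambda_{p/q})$ is bounded in modulus by $C\lambda_{p/q}^{-1}q^{-1}\|\Upsilon_{p/q}\|_{C^0}=O(\lambda_{p/q}^{-1}q^{6}\|\mu_{\varepsilon}\|_{C^1}^2)$, which is dominated by the claimed $\lambda_{p/q}^{-1}q^{7}\|\mu_{\varepsilon}\|_{C^1}^2$ bound. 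Combined with the $O(e^{2N+2}\|\mu_{\varepsilon}\|_{C^{N+1}})$ Taylor remainder from \eqref{expan1}, this yields the stated identity. I do not anticipate a substantive obstacle; the only delicate point is the non-aliasing bookkeeping secured by $q>2N$, which makes the formula closed in the Fourier coefficients $\{a_{q-2l}\}_{|l|\leq N}$.
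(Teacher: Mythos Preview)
Your proposal is correct and follows essentially the same route as the paper: substitute the expansion \eqref{expan1} into the averaged identity \eqref{con1}, use the elementary orbit-average lemma to eliminate all harmonics with frequency not in $q\mathbb{N}$, and then project onto $\cos(q\theta)$ by integrating against it over $[0,2\pi]$. Your explicit remark on the role of the hypothesis $q>2N$ in preventing aliasing (so that $q-2l\geq 1$ for all $|l|\leq N$) is a point the paper leaves implicit but which is indeed needed for the formula to make sense.
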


\medskip

Similarly, if we multiply both sides by $\sin q\theta$ and integrate with respect to $\theta$ 
from $0$ to $2\pi$, we obtain the analogous equality for $b_q$.\\

\medskip

\section{The case $q_0=3$}\label{case3}
In this section we consider a $3$-rationally integrable domain $\Omega$, whose boundary is 
${C^3}$--close to  an ellipse $\cE_{e,c}$, {\it i.e.}, for a {$C^3$}--small function $\mu(\varphi)$ we have  
$$\partial \Omega=\cE_{e,c}+\mu(\varphi).$$
Let 
$$\mu(\varphi)=\mu'_0+\sum_{k=1}^{+\infty}a_k\cos (k\varphi) +b_k\sin (k\varphi),$$
and  assume 
\be \label{3:small}
\|\mu\|_{C^1}\leq e^6.
\ee
We will show that {the higher order relations} on the existence of  
 integrable rational caustics of rotation numbers  {$\frac{1}{2k+1}, k \geq 1$, 
and $\frac{2}{7}$} imply that 
\be \label{bnd:q3-a3} 
a_3=O(e^2\|\mu\|_{C^3})\quad ,\quad b_3=O(e^2\|\mu\|_{C^3}).\\
\ee

\brm \label{q0=3}
The proof in this case consists of one step and does not require {any other iteration of the same argument (compare also with 
Remarks \ref{q0=4} and \ref{q0=5})}.\\
\erm 

For simplicity, we assume  that the semi-major axis of $\cE_{e,c}$ equals to $1$, 
{\it i.e.}, $c=e$, {and we denote it simply by $\cE_e$}. 

{Let us start by observing the following lemma, which is a special case of  Lemma 
\ref{fourier41} (and of Lemma \ref{Fourier5} for $k_0=2$).  }
\medskip

\begin{lemma}
\label{fourier2}$a_5=O(e^2\|\mu\|_{C^1})$,  $a_7,\; a_9,\; a_{11}=O(e^4\|\mu\|_{C^2})$.\\
\end{lemma}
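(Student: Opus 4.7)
The plan is to apply the necessary-condition identity \eqref{fourier1} in two layers: first at the trivial order $N=0$ to obtain a uniform preliminary bound of size $O(e^2\|\mu\|_{C^1})$ on a batch of odd Fourier coefficients, then to bootstrap at order $N=1$ in order to upgrade three of these coefficients to the sharper $O(e^4\|\mu\|_{C^2})$ bound.

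As the first step, I would invoke \eqref{fourier1} with $p/q=1/q$ and $N=0$ (which is admissible since $q>2N=0$, and each such $1/q$ lies in $(0,1/3)$ so the hypothesis of $3$-rational integrability supplies the required caustic) for each $q\in\{5,7,9,11,13\}$. With $N=0$ the sum on the left is empty, so
\[
a_q \;=\; O\!\bigl(e^{2}\|\mu\|_{C^{1}}+\lambda_{1/q}^{-1}q^{7}\|\mu\|_{C^{1}}^{2}\bigr).
\]
By Lemma \ref{rot-no1} the quantity $\lambda_{1/q}$ is bounded below by a positive constant depending only on $q$ for $e$ small, and the standing smallness assumption \eqref{3:small} gives $\|\mu\|_{C^{1}}^{2}\le e^{6}\|\mu\|_{C^{1}}$. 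These two facts absorb the quadratic error into the leading term and deliver $a_q=O(e^{2}\|\mu\|_{C^{1}})$ for each $q\in\{5,7,9,11,13\}$. In particular, this is the first stated conclusion, $a_5=O(e^{2}\|\mu\|_{C^{1}})$.

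For the sharper bounds on $a_{7},a_{9},a_{11}$, I would apply \eqref{fourier1} again, now with $N=1$ and $q\in\{7,9,11\}$ (so $q>2N=2$). The resulting identity is
\[
a_{q}+\frac{e^{2}}{1-\lambda_{1/q}^{2}}\sum_{l=-1}^{1}\xi_{1,l}(q-2l)\,a_{q-2l} \;=\; O\!\bigl(e^{4}\|\mu\|_{C^{2}}+\lambda_{1/q}^{-1}q^{7}\|\mu\|_{C^{1}}^{2}\bigr).
\]
The coefficients $\xi_{1,l}(q-2l)$ are polynomial in the index by Appendix \ref{e-expansion-sec} (hence $O(1)$ uniformly over the finite range $q\le 11$), and $(1-\lambda_{1/q}^{2})^{-1}$ is bounded for small $e$ again by Lemma \ref{rot-no1}. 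Plugging in the first-layer estimate $|a_{q-2}|,|a_{q}|,|a_{q+2}|=O(e^{2}\|\mu\|_{C^{1}})$ — which is exactly why we harvested $a_{13}$ as well in the first step — the entire sum contributes $O(e^{4}\|\mu\|_{C^{1}})\subseteq O(e^{4}\|\mu\|_{C^{2}})$, and the quadratic remainder on the right is again absorbed using $\|\mu\|_{C^{1}}\le e^{6}$. Rearranging yields $a_{q}=O(e^{4}\|\mu\|_{C^{2}})$ for $q=7,9,11$.

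Two observations on the structure. First, the same bootstrap cannot be performed for $a_{5}$: the $N=1$ identity at $q=5$ involves $a_{3}$ on the left, and $a_{3}$ is precisely the coefficient we are ultimately trying to estimate in \eqref{bnd:q3-a3}. This is the conceptual reason only the weaker bound appears for $a_{5}$ in the lemma. Second, the only real obstacle is purely book-keeping: checking that the polynomial-in-$q$ constants arising from $\xi_{1,l}$, $\lambda_{1/q}^{-1}$ and the factor $q^{7}$ all combine to stay $O(1)$ over the finite range $q\le 11$, which is straightforward. No ingredient beyond \eqref{fourier1}, Lemma \ref{rot-no1}, and the smallness assumption \eqref{3:small} is needed.
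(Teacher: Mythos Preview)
Your proof is correct and follows essentially the same approach as the paper: the paper does not prove Lemma~\ref{fourier2} directly but refers to it as a special case of Lemma~\ref{Fourier5} (with $k_0=2$), whose proof is precisely the two-layer bootstrap you describe --- applying \eqref{fourier1} with $N=0$ for $q=5,7,9,11,13$ and then with $N=1$ for $q=7,9,11$. The remark following the lemma in the paper confirms that exactly these five rotation numbers $\tfrac{1}{5},\tfrac{1}{7},\tfrac{1}{9},\tfrac{1}{11},\tfrac{1}{13}$ are used.
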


\brm {{Although we do not provide a direct proof of this lemma, let us point out that 
it exploits the}}
%In this lemma is not given, but we observe that we use 
existence of 
 integrable rational caustics of rotation numbers  
$\frac{1}{5}, \,\frac 17,\, \frac 19, \frac 1{11}$, and $\frac{1}{13}$.
\erm 

\medskip

%\begin{proof}
%Due to \eqref{fourier1}, with $m=0$, for the existence of caustics with rotation numbers $\frac{1}{2k+1}$, $k=2,3,4,5,6,$ we have
%$$a_{2k+1}=O(e^2\|\mu\|_{C^1}), k=2,3,4,5,6.$$
%Then we substitute this back to \eqref{fourier1} with $m=1$ for $a_{2k+1}$, $k=3,4,5$,
%we have
%$$a_7, \; a_9,\; a_{11}=O(e^4\|\mu\|_{C^2}).$$
%The assertion of the lemma is proven. \end{proof}

{Let us now show how property  \eqref{bnd:q3-a3} follows 
from this lemma.
\begin{itemize}
\item
From the existence of { an integrable rational}
caustic with rotation number $1/5$, using \eqref{fourier1} {and \eqref{3:small}} 
with $N=1$, we deduce that
\[
a_5+\big[\xi_{1,-1}(7)\,a_7+\xi_{1,1}(3)\,a_3\,\big]\frac{e^2}{1-\lambda_{1/5}^2}+O(e^4\|\mu\|_{C^2})=0,
\]
where $\xi_{1,\pm1}(k)=\pm\frac{k}{16}$ 
(see Appendix \ref{e-expansion-sec}). Hence, it follows from Lemmata \ref{rot-no1} and  \ref{fourier2} that
\begin{equation}\label{5-caustic}
a_5+\frac{3a_3}{16}\frac{e^2{(1+O(e^2))}}{\cos^2\frac{\pi}{5}}=
O(e^4\|\mu\|_{C^2}).
\end{equation}
which implies
\begin{equation}\label{5-caustic}
a_5+\frac{3a_3}{16}\frac{e^2}{\cos^2\frac{\pi}{5}}=
O(e^4\|\mu\|_{C^2}).
\end{equation}

\item From the existence of { an integrable rational } 
caustic with rotation number $1/7$, using \eqref{fourier1} {and \eqref{3:small}} 
with $N=2$, we obtain that
\[
a_7+\sum_{n=1}^2\sum_{l=-n}^{n}\xi_{n,l}(7-2l)
\,a_{7-2l}\,\frac{e^{2n}}{(1-\lambda_{1/7}^2)^n}+
O(e^6\|\mu\|_{C^3})=0,
\]
where  $\xi_{2,2}(k)=\frac{k^2+k}{512}$ (see Appendix \ref{e-expansion-sec}).

Observe that, as  
it follows from Lemma~\ref{rot-no1}, 
\begin{equation}\label{estimate1minuslambda}
\frac{1}{(1-\lambda_{\omega}^2)^n}=\frac{1+O(e^2)}{{\cos^{2n}}(\pi \omega)}.
\end{equation}

{Hence, we obtain:
\[
a_7+\sum_{n=1}^2 \sum_{l=-n}^{n} \xi_{n,l}(7-2l)
\,a_{7-2l}\,\frac{ (1+O(e^2))}{\cos^{2n}(\frac{\pi}{7})} e^{2n}
=
O(e^6\|\mu\|_{C^3}),
\]
which implies, using the estimates in Lemma \ref{fourier2},  that
 \begin{equation}\label{7-caustic1}a_7+
\frac{5a_5}{16}\frac{e^2}{\cos^2(\frac{\pi}{7})}+
\frac{12a_3}{512}\frac{e^4}{\cos^4(\frac{\pi}{7})}=O(e^6\|\mu\|_{C^3}).
\end{equation}
In fact, for $n=2$ the terms $e^{2n}O(e^2) = O(e^6)$. For $n=1$, the same is true, observing that 
$a_5=O(e^2\|\mu\|_{C^1})$,  $a_7=O(e^4\|\mu\|_{C^2})$ and $a_9=O(e^4\|\mu\|_{C^2})$, as it follows from Lemma \ref{fourier2} (see also Sections \ref{subsecodd} and \ref{subseceven} for  more precise computations).
}
\\
%\textb{It follows from Lemma~\ref{rot-no1} that 
%$$\frac{1}{(1-\lambda_{1/7}^2)^n}=\frac{1}{\cos^n\frac{\pi}{7}}+O(e^2).$$
%Combining  with Lemma \ref{fourier2}, we obtain } \begin{equation}\label{7-caustic1}a_7+
%\frac{5a_5}{16}\frac{e^2\textb{(1+O(e^2))}}{\cos^2\frac{\pi}{7}}+
%\frac{12a_3}{512}\frac{e^4}{\cos^4\frac{\pi}{7}}=O(e^6\|\mu\|_{C^3}).
%\end{equation}
%%\textr{Need to comment on $O(e^2)$}
%%I think the referee is correct: See comment (12) page 4. Indeed, by 
%%Lemma \ref{rot-no1} we have $\lb(e,1/7)=b \sin \pi/7+O(e^2)$. 
%%This $O(e^2)$ in the $a_5$-term this will produce $O(e^4)$, which we neglected!}
%\\
\item Similarly, from  the existence of  { an integrable rational} caustic with 
rotation number $2/7$, we get
\begin{equation}\label{7-caustic2}
a_7+\frac{5a_5}{16}\frac{e^2}{\cos^2 \left(\frac{2\pi}{7}\right)}+
\frac{12a_3}{512}\frac{e^4}{\cos^4 \left(\frac{2\pi}{7}\right)}=O(e^6\|\mu\|_{C^3}).
\end{equation}
\medskip
\item 
Combining \eqref{5-caustic}, \eqref{7-caustic1} and \eqref{7-caustic2}, we obtain the linear system
\begin{equation}\label{linear-system-3}
\left(\begin{array}{ccc}\frac{3e^2}{16\cos^2 \left(\frac{\pi}{5}\right)} & 1 & 0 \\
\frac{12e^4}{512\cos^4\left(\frac{\pi}{7}\right)} & \frac{5e^2}{16\cos^2\left(\frac{\pi}{7}\right)} 
& 1 \\ \frac{12e^4}{512\cos^4\left(\frac{2\pi}{7}\right)} & \frac{5e^2}{16\cos^2\left(\frac{2\pi}{7}\right)} 
& 1\end{array}\right)\left(\begin{array}{c}a_3 \\
a_5 \\ a_7\end{array}\right)=
\left(\begin{array}{c}O(e^4\|\mu\|_{C^2}) \\ 
O(e^6\|\mu\|_{C^3})\\ O(e^6\|\mu\|_{C^3})\end{array}\right).
\end{equation}
 {Observe that coefficient matrix is invertible\footnote{By means of Mathematica, one can compute that 
its determinant is $-4.182\times 10^{-4}e^4.$}; moreover, using Theorem \ref{thm:inv-matrix} in Appendix \ref{appendix-matrix}
we can compute the first row of its inverse, which has the form 
\begin{equation}\label{hierstruc1}
\left(\begin{array}{ccc}O(e^{-2}) & O(e^{-4}) & O(e^{-4})\end{array}\right).
\end{equation}}
This allows us to conclude that
$a_3=O(e^2\|\mu\|_{C^3}).$\\
\end{itemize}
\medskip
Similarly,
one can prove that $b_3=O(e^2\|\mu\|_{C^3}).$\\
 \medskip
}

\section{The case $q_0=4$}\label{case4}
In this section we consider a $4$-rationally integrable domain $\Omega$, whose boundary is 
$C^6$--close to  an ellipse {$\cE_{e}$} (also here we assume $c=e$),
{\it i.e.} 
$$\partial \Omega=\cE_{e}+\mu(\varphi),$$
where $\mu$ is  a $C^6$--small function.
Let 
$$\mu(\varphi)=\mu'_0+\sum_{k=1}^{+\infty}a_k\cos (k\varphi) +b_k\sin (k\varphi),$$
and  assume 
%$$\|\mu\|_{C^1}\leq e^{12}.$$
\begin{equation}\label{3:smallbis} 
%\marginnote{\textr{Double-check all these  smallness conditions.} 
%\textb{We need $\|\mu\|_{C^1}^2=O(e^{12}\|\mu\|_{C^6})$, so I think this smallness assumption is fine.}}
\|\mu\|_{C^1}\leq e^{12}.
\end{equation}
We will show that the {higher order conditions} on the  existence of 
integrable rational caustics of {rotation numbers 
$\frac{1}{2k+1},\ \frac{1}{2k+2},\ k\geq 2,$ along with $\frac{2}{9}$ and $\frac{3}{14}$, }
%\begin{itemize}
%\item[({i})] $\frac{1}{5}, \,\frac{1}{7},\,\frac{1}{9},$ and  $\frac{2}{9}$, 
%%\textr{$\frac{1}{11},\,\frac{1}{13},$}
%\item[({ii})] $\frac{1}{6},\, \frac{1}{8},\, \frac{1}{10},\,\frac{1}{12},\frac{1}{14},$ and  $\frac{3}{14}$,
%\textb{
%\item[({iii})] $\frac{1}{11},\, \frac{1}{13},\, \frac{1}{15},$
%}
%\end{itemize}
imply that 
\be \label{bnd:q4-a34} 
a_k=O(e^2\|\mu\|_{C^6}) \quad ,\quad 
b_k=O(e^2\|\mu\|_{C^6}),\quad k=3,4.
\ee

{
\brm \label{q0=4}
The proof in this case consists of two steps, {related to the} odd and even cases, and 
does not require {any iteration (compare also with Remarks \ref{q0=3} and \ref{q0=5})}.
We give a detailed account of the odd case below.
\erm }

{Let us start by stating the following lemma, which is also a special case of Lemma \ref{Fourier5}
(with $k_0=2$)}.\\

%We  also assume the semi-major axis of $\cE_{e,e}$ equal to $1$. 
%For simplicity we denote it by $\cE_e$. 

%\begin{lemma}\label{fourier41}
%$$a_5=O(e^2\|\mu\|_{C^1}),\;a_7=O(e^4\|\mu\|_{C^2}),\; 
%a_9=O(e^6\|\mu\|_{C^3}), a_{11}=O(e^8\|\mu\|_{C^4}).$$
%\end{lemma}
\begin{lemma}\label{fourier41} 
%\marginnote{\textr{I think we also need $a_{13}$. 
%Moreover, the previous stament did not match lemma 7.1.}\textb{right.} \textg{Sorry, 
%for being picky, but it seems that $a_{13}=O(e^4\|\mu\|_{C^3}\|)$ suffices. }\textb{I think we should put $a_{13}=O(e^6)$ to make it look consistent.}}
$$a_5=O(e^2{\|\mu\|_{C^3}}),\;a_7
=O(e^4 {\|\mu\|_{C^3}}),\; 
a_9,\; a_{11},\;a_{13}=O(e^6 {\|\mu\|_{C^3}}).$$
\end{lemma}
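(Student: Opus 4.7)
The plan is a three-step bootstrap applied to the necessary condition \eqref{fourier1}, in the spirit of the $q_{0}=3$ argument that led to \eqref{linear-system-3}, but with one additional level of refinement. The inputs are the integrable rational caustics of rotation numbers $\tfrac{1}{5},\tfrac{1}{7},\tfrac{1}{9},\tfrac{1}{11},\tfrac{1}{13}$, all of which are guaranteed by $4$-rational integrability. Thanks to the hypothesis $\|\mu\|_{C^{1}}\leq e^{12}$, combined with the estimate $\lambda_{1/q}^{-1}\leq Cq$ from Lemma~\ref{rot-no1}, the nonlinear remainder $\lambda_{1/q}^{-1}q^{7}\|\mu\|_{C^{1}}^{2}$ in \eqref{fourier1} is of order $q^{8}e^{24}$ and is therefore absorbed into the target error $O(e^{6}\|\mu\|_{C^{3}})$ for every $q\leq 13$; only the linear contributions to $a_{q}$ need to be tracked.

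The first round uses \eqref{fourier1} with $N=0$ for every caustic $1/q$, $q\geq 5$, to extract the crude estimate $a_{q}=O(e^{2}\|\mu\|_{C^{2}})$. This also handles the tail indices $a_{k}$, $k\geq 15$, which re-enter the third round through the terms $l=\pm n$. The second round then applies \eqref{fourier1} with $N=1$ at each $q\geq 7$. The coefficient of $a_{q}$ on the left-hand side is $1+O(e^{2})$, hence invertible for small $e$, while the couplings are to $a_{q\pm 2}$ with weight $e^{2}/\cos^{2}(\pi/q)$. Substituting $a_{q\pm 2}=O(e^{2}\|\mu\|_{C^{2}})$ from the first round sharpens the bound to $a_{q}=O(e^{4}\|\mu\|_{C^{3}})$ for every $q\geq 7$; at $q=5$ the same procedure only yields $a_{5}=O(e^{2}\|\mu\|_{C^{3}})$, because the coupling to $a_{3}$ is controlled only by the trivial bound $|a_{3}|\leq \|\mu\|_{C^{3}}$.

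The third round applies \eqref{fourier1} with $N=2$ to each caustic $1/q$, $q\geq 7$. The right-hand side error becomes $O(e^{6}\|\mu\|_{C^{3}})$ and the couplings are $a_{q\pm 2}\cdot O(e^{2})$ together with $a_{q\pm 4}\cdot O(e^{4})$. For $q\geq 9$ the constraint $|l|\leq n=2$ forbids $a_{1}$ and $a_{3}$ from entering, so inserting the second-round estimates makes every coupling of order at most $e^{6}\|\mu\|_{C^{3}}$, proving $a_{q}=O(e^{6}\|\mu\|_{C^{3}})$ for $q\in\{9,11,13\}$. At $q=7$ the index $a_{3}$ does appear at order $e^{4}$ and contributes $O(e^{4}\|\mu\|_{C^{3}})$, which matches the stated bound $a_{7}=O(e^{4}\|\mu\|_{C^{3}})$, while $a_{5}=O(e^{2}\|\mu\|_{C^{3}})$ is unchanged from the second round.

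The main obstacle is the careful bookkeeping of this recursive cascade: one must verify at each round that the polynomial coefficients $\xi_{n,l}(k)$ from Appendix~\ref{e-expansion-sec} grow only polynomially in $k$, and that the high-index tails $a_{k}$, $k\geq 15$, which re-enter the third round through the end terms $l=\pm n$, are absorbed by the first-round estimate after multiplication by the appropriate power of $e$. The corresponding estimates for $b_{5},b_{7},b_{9},b_{11},b_{13}$ follow by the identical argument, projecting \eqref{con1} onto $\sin(q\theta)$ instead of $\cos(q\theta)$.
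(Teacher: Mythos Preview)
Your three-round bootstrap via \eqref{fourier1} with $N=0,1,2$ is precisely the argument the paper uses (it defers Lemma~\ref{fourier41} to the general Lemma~\ref{Fourier5} with $k_0=2$, whose proof is this same iteration). One correction to your opening sentence: the five caustics $\tfrac{1}{5},\dots,\tfrac{1}{13}$ are not sufficient, because at $q=13$ in the third round the couplings to $a_{15}$ (weight $e^{2}$) and $a_{17}$ (weight $e^{4}$) force you to have $a_{15}=O(e^{4})$ and $a_{17}=O(e^{2})$, which in turn require the caustics $\tfrac{1}{15}$ and $\tfrac{1}{17}$---exactly the seven caustics listed in the paper's remark, and consistent with your own ``every $q\ge 5$'' formulation in the body.
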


\medskip

{
\brm 
{Although an independent proof of} this lemma is not given, {let us} observe that 
{it exploits the} existence of  integrable rational caustics of rotation numbers  
{$\frac{1}{5}, \,\frac 17,\, \frac 19,\, \frac 1{11},\,\frac 1{13},\, \frac{1}{15}$, 
and $\frac{1}{17}$.}
\erm }

\medskip

Let us now describe how to  prove \eqref{bnd:q4-a34}.\\

Let us first show that 
$
a_3=O(e^2\|\mu\|_{C^4}).
$

{
\begin{itemize}
\item
From the existence of an {integrable rational} caustic with rotation number $1/5$, 
using equality \eqref{fourier1} (with $N=1$) together with 
Lemmata \ref{rot-no1} and \ref{fourier41}, we deduce that
{(see also the analogous discussion for \eqref{5-caustic})}
$$
a_5+a_3\,\xi_{1,1}(3)\,\frac{e^2}{\cos^2(\frac{\pi}{5})}(1+O(e^2))=O(e^4\|\mu\|_{C^2}).
$$
{Since $|a_3|\leq \|\mu\|_{C^2}$, the term $a_3\xi_{1,1}(3)e^2O(e^2)$ 
could be put into the error term $O(e^4\|\mu\|_{C^2})$ on the right-hand side.}\\

\item {Similarly to what done in \eqref{7-caustic1},  from  the existence of  { integrable rational} 
caustic with rotation numbers $1/7$, we get 
$$
a_7+a_5\,\frac{\xi_{1,1}(5)e^2}{\cos^2(\frac{\pi}{7})}+
a_3\,\frac{\xi_{2,2}(3)e^4}{\cos^4(\frac{\pi}{7})}=O(e^6\|\mu\|_{C^3}).
$$
}

%\item Similarly, from  the existence of  { integrable rational} 
%caustics with rotation numbers $1/7$, $1/9$ and $2/9$, we get respectively
%$$
%a_7+a_5\,\frac{\xi_{1,1}(5)e^2}{\cos^2(\frac{\pi}{7})}+
%a_3\,\frac{\xi_{2,2}(3)e^4}{\cos^4(\frac{\pi}{7})}=O(e^6\|\mu\|_{C^3}),
%$$
\item {From  the existence of  {integrable rational} 
caustic with rotation number $1/9$, using \eqref{fourier1} (with $N=3$), 
\eqref{estimate1minuslambda} and \eqref{3:smallbis}, we obtain that
\[
a_9+\sum_{n=1}^3\sum_{l=-n}^{n}\xi_{n,l}(9-2l)
\,a_{9-2l}\,\frac{(1+O(e^2))}{\cos^{2n}(\frac{\pi}{9})} e^{2n}
+
O(e^8\|\mu\|_{C^4})=0,
\]
which implies, using the estimates in Lemma \ref{fourier41},  that
$$
a_9+a_7\,\frac{\xi_{1,1}(7)e^2}{\cos^2(\frac{\pi}{9})}+
a_5\,\frac{\xi_{2,2}(5)e^4}{\cos^4(\frac{\pi}{9})}+
a_3\,\frac{\xi_{3,3}(3)e^6}{\cos^6(\frac{\pi}{9})}=O(e^8\|\mu\|_{C^4}).
$$
In fact, for $n=3$ the terms $e^{2n}O(e^2) = O(e^8)$. For $n=1,2$, the same is true, observing that 
$a_5=O(e^2\|\mu\|_{C^3})$,  $a_7=O(e^4\|\mu\|_{C^3})$,  $a_9=a_{11}=a_{13}=O(e^6\|\mu\|_{C^3})$, 
as it follows from Lemma \ref{fourier41}
(see also Sections \ref{subsecodd} and \ref{subseceven} for  more precise computations).\\
}

\item {Similarly, from  the existence of  { an integrable rational} caustic with 
rotation number $2/9$, we get
$$
a_9+a_7\,\frac{\xi_{1,1}(7)e^2}{\cos^2(\frac{2\pi}{9})}+
a_5\,\frac{\xi_{2,2}(5)e^4}{\cos^4(\frac{2\pi}{9})}+
a_3\,\frac{\xi_{3,3}(3)e^6}{\cos^6(\frac{2\pi}{9})}=O(e^8\|\mu\|_{C^4}).
$$
}

%$$
%a_9+a_7\,\frac{\xi_{1,1}(7)e^2}{\cos^2(\frac{\pi}{9})}+
%a_5\,\frac{\xi_{2,2}(5)e^4}{\cos^4(\frac{\pi}{9})}+
%a_3\,\frac{\xi_{3,3}(3)e^6}{\cos^6(\frac{\pi}{9})}=O(e^8\|\mu\|_{C^4}),
%$$
%and
%$$
%a_9+a_7\,\frac{\xi_{1,1}(7)e^2}{\cos^2(\frac{2\pi}{9})}+
%a_5\,\frac{\xi_{2,2}(5)e^4}{\cos^4(\frac{2\pi}{9})}+
%a_3\,\frac{\xi_{3,3}(3)e^6}{\cos^6(\frac{2\pi}{9})}=O(e^8\|\mu\|_{C^4}).
%$$
\item 
Therefore, we obtain the following system of linear equations in the variables $a_3,\dots, a_9$:
{\small
\be \label{linear-system-4}
\left(\begin{array}{cccc}\dfrac{\xi_{1,1}(3)e^2}{\cos^2(\frac{\pi}{5})} & 1 & 0 & 0
\\
\dfrac{\xi_{2,2}(3)e^4}{\cos^4(\frac{\pi}{7})} & \dfrac{\xi_{1,1}(5)e^2}{\cos^2(\frac{\pi}{7})}& 1 & 0 
\\
\dfrac{\xi_{3,3}(3)e^6}{\cos^6(\frac{\pi}{9})} &\dfrac{\xi_{2,2}(5)e^4}{\cos^4(\frac{\pi}{9})} & 
\dfrac{\xi_{1,1}(7)e^2}{\cos^2(\frac{\pi}{9})} & 1 
\\
\dfrac{\xi_{3,3}(3)e^6}{\cos^6(\frac{2\pi}{9})} &\dfrac{\xi_{2,2}(5)e^4}{\cos^4(\frac{2\pi}{9})} & 
\dfrac{\xi_{1,1}(7)e^2}{\cos^2(\frac{2\pi}{9})} & 1 \end{array}\right)\left(\begin{array}{c}a_3 
\\
a_5 \\ a_7 \\ a_9\end{array}\right)=
\left(\begin{array}{c}O(e^4\|\mu\|_{C^2}) \\ O(e^6\|\mu\|_{C^3}) \\ O(e^8\|\mu\|_{C^4}) 
\\ O(e^8\|\mu\|_{C^4})\end{array}\right).\ \ 
\ee
}
{Observe that the coefficient matrix of this linear system is invertible\footnote{By means of Mathematica, one can compute that its determinant is $-4.02\times10^{-6}e^6$}; moreover, using Theorem \ref{thm:inv-matrix} in Appendix \ref{appendix-matrix}
we can compute the first row of its inverse, which has the form
\begin{equation}\label{hierstruc2}
\left(
\begin{array}{cccc}O(e^{-2}) & O(e^{-4}) & O(e^{-6}) & O(e^{-6})\end{array}
\right).
\end{equation}
}
All of this is enough to conclude that 
$$a_3=O(e^2\|\mu\|_{C^4}).$$
\end{itemize}
}

Let us now show that 
$
a_4=(e^2\|\mu\|_{C^6}).
$
In the same way as before, from the existence of  integrable rational
caustics with rotation numbers $1/6,\ 1/8,\ 1/10,\ 1/12,\ 1/14$ and $3/14$, 
we obtain a linear system of equations in the variables $a_4$, $a_6,\dots, a_{14}$.
\be \label{linear-system-5}
\begin{split}&\left(\begin{array}{cccccc}
\dfrac{\xi_{1,1}(4)e^2}{\cos^2(\frac{\pi}{6})} & 1 & 0 & 0 & 0 & 0 
\\
\dfrac{\xi_{2,2}(4)e^4}{\cos^4(\frac{\pi}{8})} & \dfrac{\xi_{1,1}(6)e^2}{\cos^2(\frac{\pi}{8})} & 1 & 0 & 0 & 0 
\\
\dfrac{\xi_{3,3}(4)e^6}{\cos^6(\frac{\pi}{10})} & \dfrac{\xi_{2,2}(6)e^4}{\cos^4(\frac{\pi}{10})} & \dfrac{\xi_{1,1}(8)e^2}{\cos^2(\frac{\pi}{10})} & 1 & 0 & 0 
\\
\dfrac{\xi_{4,4}(4)e^8}{\cos^8(\frac{\pi}{12})} & \dfrac{\xi_{3,3}(6)e^6}{\cos^6(\frac{\pi}{12})} & \dfrac{\xi_{2,2}(8)e^4}{\cos^4(\frac{\pi}{12})} & \dfrac{\xi_{1,1}(10)e^2}{\cos^2(\frac{\pi}{12})} & 1 & 0 
\\
\dfrac{\xi_{5,5}(4)e^{10}}{\cos^{10}(\frac{\pi}{14})} & \dfrac{\xi_{4,4}(6)e^8}{\cos^8(\frac{\pi}{14})} & \dfrac{\xi_{3,3}(8)e^6}{\cos^6(\frac{\pi}{14})} & \dfrac{\xi_{2,2}(10)e^4}{\cos^4(\frac{\pi}{14})} & \dfrac{\xi_{1,1}(12)e^2}{\cos^2(\frac{\pi}{14})} & 1 
\\ 
\dfrac{\xi_{5,5}(4)e^{10}}{\cos^{10}(\frac{3\pi}{14})} & \dfrac{\xi_{4,4}(6)e^8}{\cos^8(\frac{3\pi}{14})} & \dfrac{\xi_{3,3}(8)e^6}{\cos^6(\frac{3\pi}{14})} & \dfrac{\xi_{2,2}(10)e^4}{\cos^4(\frac{3\pi}{14})} & \dfrac{\xi_{1,1}(12)e^2}{\cos^2(\frac{3\pi}{14})} & 1\end{array}\right)
\\
&\times\left(\begin{array}{c}a_4 \\ a_6 \\ a_8 \\ 
a_{10} \\ a_{12} \\ a_{14}\end{array}\right)=\left(\begin{array}{c}O(e^4\|\mu\|_{C^2}) 
\\ O(e^6\|\mu\|_{C^3}) \\ O(e^8\|\mu\|_{C^4}) \\ O(e^{10}\|\mu\|_{C^5}) 
\\ O(e^{12}\|\mu\|_{C^6}) \\ O(e^{12}\|\mu\|_{C^6})\end{array}\right).
\end{split}
\ee
{Also in this case the coefficient matrix is non-degenerate\footnote{By means of Mathematica, one can compute that 
its determinant is $7.1437\times 10^{-5}e^{10}$}; moreover, using Theorem \ref{thm:inv-matrix} in Appendix \ref{appendix-matrix}
we can compute the first row of its inverse, which has  the form
\begin{equation}\label{hierstruc3}
\left(\begin{array}{cccccc}
O(e^{-2}) & O(e^{-4}) & O(e^{-6}) & 
O(e^{-8}) & O(e^{-10}) & O(e^{-10})\end{array}\right).
\end{equation}}
Hence, we can conclude that
$$a_4=O(e^2\|\mu\|_{C^6}).$$

\medskip

Repeating the same arguments as before, one can show the analogous equalities for $b_k$'s, namely
$$b_3=O(e^2\|\mu\|_{C^4}),\quad b_{4}=O(e^2\|\mu\|_{C^6}).
$$
\medskip

\section{The case $q_0=5$  }\label{case5}
In this section we consider a $5$-rationally integrable 
domain $\Omega$, whose boundary is $C^7$--close to  
an ellipse $\cE_e$ {(we continue to assume that $c=e$)}, 
{\it i.e.}, for a $C^7$--small function $\mu(\varphi)$ we have  
$$
\partial \Omega=\cE_e+\mu(\varphi).
$$
Let 
$$
\mu(\varphi)=\mu'_0+\sum_{k=1}^{+\infty}a_k\cos (k\varphi) +b_k\sin (k\varphi),
$$
and  assume \begin{equation}\label{3:smalltris}\|\mu\|_{C^1}\leq e^{14}.\end{equation}

\medskip

We will show that the {higher order conditions} 
on the  existence of integrable rational caustics of {rotation numbers 
$\frac{1}{2k+1},\ \frac{1}{2k+2},\ k\geq 2,$ along with 
$\frac{2}{11},\ \frac{2}{13}$ and 
$\frac{3}{16}$  }
%\begin{itemize}
%\item[(i)] $\frac{1}{7},\dots, \frac{1}{11}$, $\frac{2}{11}$,  $\frac{1}{13}$, and $\frac{2}{13}$,
%%\textr{$\frac{1}{15},\ \frac{1}{17}$}
%\item[(ii)] $\frac{1}{6},\dots, \frac{1}{16}$, and $\frac{3}{16}$, 
%\end{itemize}
imply that 
 $$
 a_k, b_k=O(e^2\|\mu\|_{C^7}),\quad k=3,4,5.\\
 $$

\medskip

{
\brm \label{q0=5}
The proof in this case consists of {three} steps: we start by analyzing 
the odd and even cases and in the {odd} case we  {need to iterate the argument once} (inductive step).
In the general case $q_0>5$, we will need the number of inductive 
steps {to be} $[q_0/2]-1$; see the beginning of Section \ref{case-general}.  \erm }
\smallskip

{Let us start by stating the following lemma, which is also a special case of Lemma \ref{Fourier5} (for $k_0=3$).}

\begin{lemma}\label{fourier3}   
%\marginnote{\textr{As before, we cannot apply directly Lemma 7.1.  Maybe here we need also the estimates for $a_{15}, a_{17}.$}\textb{$a_{17}$ should come from caustic $1/17$, right?}\textr{right.}}
$$a_7=O(e^2\|\mu\|_{C^4}),\ \ a_9=O(e^4\|\mu\|_{C^4}),\ \ 
    a_{11},a_{13},a_{15}=O(e^6\|\mu\|_{C^4}).$$
\end{lemma}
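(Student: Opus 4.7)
The approach follows the same scheme already employed for Lemma \ref{fourier2} (the case $q_0=3$) and Lemma \ref{fourier41} (the case $q_0=4$), since Lemma \ref{fourier3} is the concrete instance $k_0=3$ of the general Lemma \ref{Fourier5}. The plan is to extract, from the existence of sufficiently many integrable rational caustics $\frac{1}{2k+1}$, a system of linear equations for the target Fourier coefficients $a_7,a_9,a_{11},a_{13},a_{15}$, and then solve it using the matrix inversion result from Appendix \ref{appendix-matrix}.

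First, I would apply the necessary condition \eqref{fourier1} to each integrable rational caustic of rotation number $\frac{1}{2k+1}$ for $k=3,4,5,6,7$ (together with possibly a few additional caustics of the same form, in analogy with the seven caustics used for Lemma \ref{fourier41}), choosing the Taylor expansion order $N_k$ in each equation so that the remainder $O(e^{2N_k+2}\|\mu\|_{C^{N_k+1}}+q^7\|\mu\|_{C^1}^2)$ does not exceed the target precision $e^{2(k-1)}\|\mu\|_{C^4}$. Under the hypothesis \eqref{3:smalltris} that $\|\mu\|_{C^1}\leq e^{14}$, the ``external'' low-frequency contributions $a_1,a_3,a_5$ appearing in these relations satisfy $|a_j|\leq e^{14}$ and are absorbed into the right-hand sides, while the ``external'' high-frequency tail $a_{17},a_{19},\ldots$ is controlled by the Fourier decay bound $|a_j|\leq \|\mu\|_{C^4}/j^4$ and likewise contributes negligibly.

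Next, the resulting relations assemble into a (possibly slightly overdetermined) linear system for $(a_7,a_9,a_{11},a_{13},a_{15})$ with coefficient matrix of exactly the same hierarchical form as in \eqref{linear-system-4} and \eqref{linear-system-5}: the superdiagonal is $1$, the main diagonal is of order $e^2$ with entries $\xi_{1,1}(\cdot)/\cos^2(\pi/(2k+1))$, and the $n$-th subdiagonal is of order $e^{2(n+1)}$ with entries $\xi_{n+1,n+1}(\cdot)/\cos^{2(n+1)}(\pi/(2k+1))$, while the right-hand sides are of sizes $O(e^4\|\mu\|_{C^2})$, $O(e^6\|\mu\|_{C^3})$, $O(e^8\|\mu\|_{C^4})$, $O(e^8\|\mu\|_{C^4})$, $O(e^8\|\mu\|_{C^4})$ respectively. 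Theorem \ref{thm:inv-matrix} then yields the structure of the rows of the inverse — each shifted by one power of $e^{-2}$ relative to the previous — so that multiplying the inverse by the right-hand side produces precisely the bounds $a_7=O(e^2\|\mu\|_{C^4})$, $a_9=O(e^4\|\mu\|_{C^4})$, and $a_{11},a_{13},a_{15}=O(e^6\|\mu\|_{C^4})$. The completely symmetric argument with $\sin$ in place of $\cos$ treats the companion coefficients $b_7,\ldots,b_{15}$.

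The principal technical obstacle is verifying that this coefficient matrix is non-degenerate with the expected leading-order determinant in $e$, which is the input required by Theorem \ref{thm:inv-matrix}. For the present small value $k_0=3$, this should still be accessible by direct symbolic computation of the determinant, as was done for the matrices in \eqref{linear-system-3}, \eqref{linear-system-4}, and \eqref{linear-system-5} (whose determinants are computed in the respective footnotes). For general $k_0$ this non-degeneracy is precisely the content of the non-degeneracy hypothesis on the matrices \eqref{matrixAodd}-\eqref{matrixEven} underlying Theorem \ref{main-thm2}, and it is the main place where a case-by-case or assumption-based analysis becomes unavoidable.
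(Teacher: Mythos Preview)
Your approach diverges from the paper's and contains a genuine gap.

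The paper does not prove Lemma \ref{fourier3} by matrix inversion. It is deduced as the case $k_0=3$ of Lemma \ref{Fourier5}, whose proof is a pure iterative bootstrap: apply \eqref{fourier1} with $N=0$ for caustics $\tfrac{1}{2k+1}$, $k=k_0,\dots,4k_0$, to get $a_{2k+1}=O(e^{2}\|\mu\|_{C^{1}})$; then re-apply \eqref{fourier1} with $N=1$ for $k=k_0+1,\dots,4k_0-1$, using the previous step to absorb the neighbouring terms, obtaining $a_{2k+1}=O(e^{4}\|\mu\|_{C^{2}})$; and so on up to $N=k_0$. No linear system is solved and no non-degeneracy condition arises.

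Your matrix argument breaks at the treatment of the high-index ``external'' terms. For instance, in the equation from caustic $1/13$ with $N=3$, the coefficient $a_{17}$ enters with a factor of order $e^{4}$ (from $n=2$, $l=-2$), and in the equation from caustic $1/15$ it enters with a factor of order $e^{2}$. The Fourier-decay bound $|a_{17}|\le \|\mu\|_{C^{4}}/17^{4}$ you invoke gives only $O(\|\mu\|_{C^{4}})$, so these contributions are $O(e^{4}\|\mu\|_{C^{4}})$ and $O(e^{2}\|\mu\|_{C^{4}})$ respectively --- far too large for your claimed right-hand sides $O(e^{8}\|\mu\|_{C^{4}})$, and they would yield only $a_{13},a_{15}=O(e^{2}\|\mu\|_{C^{4}})$ rather than $O(e^{6})$. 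To repair this you must first establish $a_{17},a_{19},a_{21}=O(e^{2}\|\mu\|_{C^{1}})$ from the caustics $1/17,1/19,1/21$ --- but that is exactly the first step of the paper's bootstrap, and once you concede it, iterating is both simpler and sharper than assembling a matrix.

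You have also conflated this lemma with the role of the matrices \eqref{linear-system-4}, \eqref{linear-system-5}, \eqref{matrixAodd}. Those matrices are used \emph{after} Lemma \ref{fourier3} (or its analogues) is established, precisely to recover the coefficients $a_k$ with $3\le k\le q_0$; their lower-triangular-plus-shifted-unit structure is obtained by invoking the lemma to discard the above-diagonal contributions. The system one gets directly from \eqref{fourier1} without that input has $1$'s on the main diagonal and nonzero entries on both sides, so it is trivially invertible for small $e$; the genuine non-degeneracy obstacle you mention belongs to the later step, not to this one.
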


\medskip

\brm 
{Although an independent proof of} this lemma is not given, {let us} observe that 
{it exploits the} existence of integrable rational caustics of rotation numbers  
{$\frac{1}{5}, \,\frac 17,\, \frac 19, \frac 1{11}, 
\frac 1{13}, \frac{1}{15},\, \frac{1}{17}$, and $\frac{1}{19}$.}\\
\erm 
%\begin{lemma}\label{fourier3}
%$$a_7=O(e^2\|\mu\|_{C^1}),\ \ a_9=O(e^4\|\mu\|_{C^2}),\ \ 
%    a_{11},a_{13}=O(e^6\|\mu\|_{C^3}).$$
%\end{lemma}

\medskip

Let us now show how property  \eqref{hainan} follows 
from this lemma.

\begin{itemize}

\item {From the existence of an {integrable rational} caustic with rotation number $1/7$, using 
 equality  \eqref{fourier1} with $N=1$, we have
\[
a_7+\Big(\xi_{1,1}(5)a_5 +\xi_{1,-1}(9)a_9 \Big) \frac{e^2}{1-\lambda^2_{1/7}}+O(e^4\|\mu\|_{C^2})=0.
\]
By Lemmata \ref{rot-no1} and  \ref{fourier3}, it follows that
\[
a_7+\xi_{1,1}(5)a_5\frac{e^2}{\cos^2(\pi/7)}=O(e^4\|\mu\|_{C^2}).
\]
}

\item {From the existence of  an integrable rational  
caustic with rotation number $1/9$, using \eqref{fourier1} (with $N=2$) {and \eqref{3:smalltris}}, we obtain that
\[
a_9+\sum_{n=1}^2\sum_{l=-n}^{n}\xi_{n,l}(9-2l)
\,a_{9-2l}\,\frac{e^{2n}}{(1-\lambda_{1/9}^2)^n}+
O(e^6\|\mu\|_{C^3})=0.
\]
Using \eqref{estimate1minuslambda}, we obtain
\[
a_9+\sum_{n=1}^2 \sum_{l=-n}^{n} \xi_{n,l}(9-2l)
\,a_{9-2l}\,\frac{ (1+O(e^2))}{\cos^{2n}(\frac{\pi}{9})} e^{2n}
=
O(e^6\|\mu\|_{C^3}),
\]
which implies, using the estimates in Lemma \ref{fourier3},  that
\[
a_9+a_7\frac{\xi_{1,1}(7)e^2}{\cos^2(\pi/9)}+a_5\frac{\xi_{2,2}(5)e^4}{\cos^4(\pi/9)}=O(e^6\|\mu\|_{C^3}).
\]
In fact, clearly for $n=2$ the terms $e^{2n}O(e^2) = O(e^6)$. For $n=1$, the same is true, observing that 
$a_7=O(e^2\|\mu\|_{C^1})$,  $a_9=O(e^4\|\mu\|_{C^2})$, and  $a_{11}, a_{13}=O(e^6\|\mu\|_{C^3})$, as it follows from Lemma \ref{fourier3} (see also Sections \ref{subsecodd} and \ref{subseceven} for  more precise computations).\\
}

\item {From  the existence of  {integrable rational} 
caustic with rotation number 
$1/11$, using \eqref{fourier1} (with $N=3$), \eqref{3:smalltris} and \eqref{estimate1minuslambda}, we obtain that
\[
a_{11}+\sum_{n=1}^3\sum_{l=-n}^{n}\xi_{n,l}(11-2l)
\,a_{11-2l}\,\frac{(1+O(e^2))}{\cos^{2n}(\frac{\pi}{11})} e^{2n}
+
O(e^8\|\mu\|_{C^4})=0,
\]
which implies, again using the estimates in Lemma \ref{fourier3},  that  
%\marginnote{\textr{Here we need $a_{15}=O(e^4)$ and $a_{17}=O(e^2)$ that follows from existence of caustic with rot. number $1/17$}}
\[a_{11}+a_9\frac{\xi_{1,1}(9)e^2}{\cos^2(\pi/11)}+
a_7\frac{\xi_{2,2}(7)e^4}{\cos^4(\pi/11)}+
a_5\frac{\xi_{3,3}(5)e^6}{\cos^6(\pi/11)}=O(e^8\|\mu\|_{C^4});
\]
see also Sections \ref{subsecodd} and \ref{subseceven} for  more precise computations.\\
}

\item {Similarly, from the existence of an { integrable rational} 
caustic with rotation number  $2/11$, we obtain:
\[a_{11}+a_9\frac{\xi_{1,1}(9)e^2}{\cos^2(2\pi/11)}+
a_7\frac{\xi_{2,2}(7)e^4}{\cos^4(2\pi/11)}+
a_5\frac{\xi_{3,3}(5)e^6}{\cos^6(2\pi/11)}=O(e^8\|\mu\|_{C^4}).
\]}

%From the existence of an { integrable rational} caustic with rotation number $1/7$, using 
% equality  \eqref{fourier1} with $N=1$, we have
%\[
%a_7+(\xi_{1,1}(5)a_5 +\xi_{1,-1}(9)a_9 ) \frac{e^2}{1-\lambda^2_{1/7}}+O(e^4\|\mu\|_{C^2})=0.
%\]
%By Lemmata \ref{rot-no1} and  \ref{fourier3}, it follows that
%$$
%\lambda_{1/7}=\sin \frac{\pi}{7}+O(e^2),\quad\text{ and}\quad  a_9=O(e^4\|\mu\|_{C^2}).
%$$
%\textb{Therefore, $$\frac{1}{1-\lambda^2_{1/7}}=\frac{1}{\cos^2{\pi/7}}+O(e^2).$$}
%Hence, we have
%\[
%a_7+\xi_{1,1}(5)a_5\frac{e^2}{\cos^2(\pi/7)}=O(e^4\|\mu\|_{C^2}).
%\]
%Similarly, from the existence of an { integrable rational} 
%caustic with rotation number $1/9$, $1/11$ and $2/11$, we obtain respectively:
%\[
%a_9+a_7\frac{\xi_{1,1}(7)e^2}{\cos^2(\pi/9)}+a_5\frac{\xi_{2,2}(5)e^4}{\cos^4(\pi/9)}=O(e^6\|\mu\|_{C^3})
%\]
%%From the existence of { integrable rational} caustics with rotation numbers $1/11$ and $2/11$,
%%we have
%\[a_{11}+a_9\frac{\xi_{1,1}(9)e^2}{\cos^2(\pi/11)}+
%a_7\frac{\xi_{2,2}(7)e^4}{\cos^4(\pi/11)}+
%a_5\frac{\xi_{3,3}(5)e^6}{\cos^6(\pi/11)}=O(e^8\|\mu\|_{C^4}),
%\]
%and
%\[a_{11}+a_9\frac{\xi_{1,1}(9)e^2}{\cos^2(2\pi/11)}+
%a_7\frac{\xi_{2,2}(7)e^4}{\cos^4(2\pi/11)}+
%a_5\frac{\xi_{3,3}(5)e^6}{\cos^6(2\pi/11)}=O(e^8\|\mu\|_{C^4}).
%\]

\item
{Putting all of this information together, we obtain a system 
of linear equations with unknowns $a_5, a_7, a_9, a_{11}$ and the following 
coefficient matrix:}
\[\left(\begin{array}{cccc}\dfrac{\xi_{1,1}(5)e^2}{\cos^2(\pi/7)} & 1 & 0 & 0 
\\
\dfrac{\xi_{2,2}(5)e^4}{\cos^4(\pi/9)} & \dfrac{\xi_{1,1}(7)e^2}{\cos^{2}(\pi/9)} & 1 & 0 
\\
\dfrac{\xi_{3,3}(5)e^6}{\cos^6(\pi/11)} & \dfrac{\xi_{2,2}(7)e^4}{\cos^4(\pi/11)} 
& \dfrac{\xi_{1,1}(9)e^2}{\cos^2(\pi/11)} & 1 
\\
\dfrac{\xi_{3,3}(5)e^6}{\cos^6(2\pi/11)} & \dfrac{\xi_{2,2}(7)e^4}{\cos^4(2\pi/11)} 
& \dfrac{\xi_{1,1}(9)e^2}{\cos^2(2\pi/11)} & 1\end{array}\right).\]
This matrix is invertible\footnote{By means of Mathematica, one can compute that its determinant is 
$1.4\times 10^{-5}e^{6} $.} and, using Theorem \ref{thm:inv-matrix} in Appendix \ref{appendix-matrix}, we can compute 
the first row of its inverse, which has the form
\begin{equation}\label{hierstruc4}
\big(O(e^{-2}),\; O(e^{-4}),\; O(e^{-6}),\; O(e^{-6})\big).
\end{equation}
Hence, we conclude that 
\begin{equation}\label{hainan}
a_5=O(e^2\|\mu\|_{C^4}).
\end{equation}
\end{itemize}
\medskip

Now, using this new estimate,  we obtain the following improvement of Lemma~\ref{fourier3}. 
{This is the second inductive step aforementioned in Remark \ref{q0=5}. 
This Lemma follows from \eqref{assumption-i} for $k_0=3$ and $m=2$.}

\begin{lemma}\label{fourier4}
$$a_7=O(e^4\|\mu\|_{C^4}),\;a_{9}=O(e^6\|\mu\|_{C^4}),\; a_{11}=O(e^8\|\mu\|_{C^4}),\; a_{13}=O(e^{10}\|\mu\|_{C^5})$$
and $$a_{15},a_{17},a_{19},a_{21}=O(e^{10}\|\mu\|_{C^5}).$$
\end{lemma}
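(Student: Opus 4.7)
The plan is to bootstrap the estimates in Lemma \ref{fourier3} using the refined bound $a_5=O(e^2\|\mu\|_{C^4})$ obtained in \eqref{hainan}, together with the smallness assumption \eqref{3:smalltris}. The scheme proceeds index-by-index, applying the necessary condition \eqref{fourier1} for preservation of an integrable rational caustic of rotation number $1/q$ with $q=7,9,11,\ldots,23$, in each step at the appropriate precision $N$. Each round gains an additional factor of $e^2$ in the estimate for the next $a_{2k+1}$, because the leading-order nontrivial coefficient on the right-hand side of \eqref{fourier1} is $a_{q\pm 2}\cdot O(e^2)/\cos^{2}(\pi/q)$, and the improved bounds on the neighboring coefficients propagate downward by one order in $e^2$.

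Concretely, for $a_7$ I would apply \eqref{fourier1} with $N=1$ to the $1/7$ caustic: the left-hand side is $a_7 + \xi_{1,1}(5)\,a_5\,e^2/\cos^2(\pi/7) + \xi_{1,-1}(9)\,a_9\,e^2/\cos^2(\pi/7) + O(e^4)$, and substituting the refined $a_5=O(e^2\|\mu\|_{C^4})$ together with the already-known $a_9=O(e^4\|\mu\|_{C^4})$ yields $a_7=O(e^4\|\mu\|_{C^4})$. For $a_9$ I would use $N=2$ and the $1/9$ caustic, whose equation involves $a_7$ and $a_{11}$ at order $e^2$, $a_5$ and $a_{13}$ at order $e^4$; plugging in the refined bounds (including the new one on $a_7$) gives $a_9=O(e^6\|\mu\|_{C^4})$. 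Analogously, $a_{11}$ is recovered via \eqref{fourier1} with $N=3$ and the $1/11$ caustic, producing $a_{11}=O(e^8\|\mu\|_{C^4})$, and $a_{13}$ via $N=4$ and the $1/13$ caustic, yielding $a_{13}=O(e^{10}\|\mu\|_{C^5})$ (the loss to $C^5$ arises from the $O(e^{2N+2}\|\mu\|_{C^{N+1}})$ remainder in \eqref{fourier1}). For the tail $a_{15},a_{17},a_{19},a_{21}$, I would apply the same reasoning with caustics of rotation numbers $1/(2k+1)$ for $k=7,\ldots,11$, using the newly improved bounds on lower-index coefficients to descend the $e$-order to $O(e^{10}\|\mu\|_{C^5})$.

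The main obstacle, and the only genuine technical content, is careful bookkeeping of error terms: one must verify that the multiplicative $(1+O(e^2))$ factors from \eqref{estimate1minuslambda}, combined with the $O(e^{2N+2}\|\mu\|_{C^{N+1}}+\lambda_{p/q}^{-1}q^7\|\mu\|_{C^1}^2)$ remainder from \eqref{fourier1} and the smallness \eqref{3:smalltris}, are all strictly below the target order at each stage, so that the inductive conclusion on $a_{2k+1}$ is not spoiled by residual $O(e^{2n})\cdot a_{2k+1-2l}$ terms. This verification is the content of the general packaging result Lemma \ref{Fourier5} (here invoked with $k_0=3,\ m=2$), which formalizes exactly this propagation of estimates and whose hypotheses are satisfied thanks to \eqref{hainan}. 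Once the bookkeeping is in place, no inversion of a linear system is needed: the argument is a direct triangular cascade of one-variable estimates.
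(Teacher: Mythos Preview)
Your approach is correct and coincides with the paper's: the lemma is obtained by re-running the triangular cascade underlying Lemma~\ref{Fourier5}, now with the improved input $a_5=O(e^2\|\mu\|_{C^4})$ from \eqref{hainan}, and the paper records this exactly as ``follows from \eqref{assumption-i} for $k_0=3$ and $m=2$''. One small correction: the general packaging result you want is the inductive hypothesis \eqref{assumption-i}, not Lemma~\ref{Fourier5} itself (which has no parameter $m$ and corresponds to the base case $m=k_0$); otherwise your bookkeeping of orders in $e$ and the source of the $C^5$ norm from the $N=4$ remainder is accurate.
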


\medskip

Proceeding as before, using Lemmata \ref{rot-no1} and \ref{fourier4} and the equality \eqref{fourier1}, 
from {the higher order relations} on the existence of { integrable rational} caustics of rotation numbers $1/7$, $1/9$, 
$1/11$, $2/11$, $1/13$, and $2/13,$ we obtain the following linear system
{(see  Sections \ref{subsecodd} and \ref{subseceven} for  more precise computations)}:

\begin{equation}\label{big1-section6}
\begin{split}&\left(
\begin{array}{cccccc}\frac{\xi_{2,2}(3)e^4}{\cos^4(\pi/7)}&\frac{\xi_{1,1}(5)e^2}{\cos^2(\pi/7)} & 1 & 0 & 0&0 \\ 
\frac{\xi_{3,3}(3)e^6}{\cos^6(\pi/9)}&\frac{\xi_{2,2}(5)e^4}{\cos^4(\pi/9)} & \frac{\xi_{1,1}(7)e^2}{\cos^{2}(\pi/9)} & 1 & 0&0 \\ 
\frac{\xi_{4,4}(3)e^8}{\cos^8(\pi/11)}&\frac{\xi_{3,3}(5)e^6}{\cos^6(\pi/11)} & \frac{\xi_{2,2}(7)e^4}{\cos^4(\pi/11)} & \frac{\xi_{1,1}(9)e^2}{\cos^2(\pi/11)} & 1 &0 \\ 
\frac{\xi_{4,4}(3)e^8}{\cos^8(2\pi/11)}&\frac{\xi_{3,3}(5)e^6}{\cos^6(2\pi/11)} & \frac{\xi_{2,2}(7)e^4}{\cos^4(2\pi/11)} & \frac{\xi_{1,1}(9)e^2}{\cos^2(2\pi/11)} & 1&0 \\
\frac{\xi_{5,5}(3)e^{10}}{\cos^{10}(\pi/13)}&\frac{\xi_{4,4}(5)e^8}{\cos^8(\pi/13)}&\frac{\xi_{3,3}(7)e^6}{\cos^6(\pi/13)}&\frac{\xi_{2,2}(9)e^4}{\cos^4(\pi/13)}&\frac{\xi_{1,1}(11)e^2}{\cos^2(\pi/13)}&1 \\
\frac{\xi_{5,5}(3)e^{10}}{\cos^{10}(2\pi/13)}&\frac{\xi_{4,4}(5)e^8}{\cos^8(2\pi/13)}&\frac{\xi_{3,3}(7)e^6}{\cos^6(2\pi/13)}&\frac{\xi_{2,2}(9)e^4}{\cos^4(2\pi/13)}&\frac{\xi_{1,1}(11)e^2}{\cos^2(2\pi/13)}&1
\end{array}\right)\\ 
&\times\left(\begin{array}{c}a_3 \\ a_5 \\ a_7 \\ a_9 \\ a_{11} \\ a_{13}\end{array}\right)=
\left(\begin{array}{c}O(e^6\|\mu\|_{C^4}) \\ O(e^8\|\mu\|_{C^4}) 
\\ O(e^{10}\|\mu\|_{C^5}) \\ O(e^{10}\|\mu\|_{C^5}) 
\\ O(e^{12}\|\mu\|_{C^6}) \\ O(e^{12}\|\mu\|_{C^6})
\end{array}
\right).
\end{split}
\end{equation}
{This matrix of coefficients is invertible\footnote{By means of Mathematica, one can compute that its determinant is 
$6.86498\times10^{-15}e^{16} $.} and, using Theorem \ref{thm:inv-matrix} in Appendix \ref{appendix-matrix}, we can compute 
the first two rows of its inverse, which  have the form
\begin{equation}\label{hierstruc5}
\left(\begin{array}{cccccc}O(e^{-4}) & O(e^{-6}) & O(e^{-8}) & O(e^{-8}) & O(e^{-10}) & O(e^{-10}) 
\\ O(e^{-2})&O(e^{-4}) & O(e^{-6}) & O(e^{-6}) & O(e^{-8}) & O(e^{-8})\end{array}\right).
\end{equation}}
Therefore, we conclude that
$$a_3=O(e^2\|\mu\|_{C^6}),\quad a_5=O(e^4\|\mu\|_{C^6}).$$

Then, we  want to show that $$a_4=O(e^2\|\mu\|_{C^7}).$$
For this,  we exploit relations \eqref{fourier1}, obtained from {the higher order conditions on} the existence  caustics with rotation numbers 
$1/6,\ 1/8,\ 1/10$, $1/12,\ 1/14,\ 1/16$ and $3/16$. 

In the same spirit as before {(see  Sections \ref{subsecodd} and \ref{subseceven} for  more precise computations)}, we get the following  linear system:
\[\begin{split}&\left(\begin{array}{ccccccc}\frac{\xi_{1,1}(4)e^2}{\cos^2(\pi/6)} & 1 & 0 & 0 & 0 & 0 &0 \\  \frac{\xi_{2,2}(4)e^4}{\cos^4(\pi/8)} & \frac{\xi_{1,1}(6)e^2}{\cos^2(\pi/8)} & 1 & 0 & 0 & 0 & 0 \\ \frac{\xi_{3,3}(4)e^6}{\cos^6(\pi/10)} & \frac{\xi_{2,2}(6)e^4}{\cos^4(\pi/10)} & \frac{\xi_{1,1}(8)e^2}{\cos^2(\pi/10)} & 1 & 0 & 0 & 0 \\ \frac{\xi_{4,4}(4)e^8}{\cos^8(\pi/12)} & \frac{\xi_{3,3}(6)e^6}{\cos^6(\pi/12)} & \frac{\xi_{2,2}(8)e^4}{\cos^4(\pi/10)} & \frac{\xi_{1,1}(10)e^2}{\cos^2(\pi/12)} & 1 & 0 & 0 \\ \frac{\xi_{5,5}(4)e^{10}}{\cos^{10}(\pi/14)} & \frac{\xi_{4,4}(6)e^8}{\cos^8(\pi/14)} & \frac{\xi_{3,3}(8)e^6}{\cos^6(\pi/14)} & \frac{\xi_{2,2}(10)e^4}{\cos^4(\pi/14)} & \frac{\xi_{1,1}(12)e^2}{\cos^2(\pi/14)} & 1 & 0 \\ \frac{\xi_{6,6}(4)e^{12}}{\cos^{12}(\pi/16)} & \frac{\xi_{5,5}(6)e^{10}}{\cos^{10}(\pi/16)} & \frac{\xi_{4,4}(8)e^8}{\cos^8(\pi/16)} & \frac{\xi_{3,3}(10)e^6}{\cos^6(\pi/16)} & \frac{\xi_{2,2}(12)e^4}{\cos^4(\pi/16)} & \frac{\xi_{1,1}(14)e^2}{\cos^2(\pi/16)} & 1 \\ 
\frac{\xi_{6,6}(4)e^{12}}{\cos^{12}(3\pi/16)} & \frac{\xi_{5,5}(6)e^{10}}{\cos^{10}(3\pi/16)} & \frac{\xi_{4,4}(8)e^8}{\cos^8(3\pi/16)} & \frac{\xi_{3,3}(10)e^6}{\cos^6(3\pi/16)} & \frac{\xi_{2,2}(12)e^4}{\cos^4(3\pi/16)} & \frac{\xi_{1,1}(14)e^2}{\cos^2(3\pi/16)} & 1\end{array}\right)\\ 
&\times\left(\begin{array}{c}a_4 \\ a_6 \\ a_8 \\ a_{10} \\ a_{12} \\ a_{14} \\ a_{16}\end{array}\right)=\left(\begin{array}{c}O(e^4\|\mu\|_{C^2}) \\ O(e^6\|\mu\|_{C^3}) \\ O(e^8\|\mu\|_{C^4}) \\ O(e^{10}\|\mu\|_{C^5}) \\ O(e^{12}\|\mu\|_{C^6}) \\ O(e^{14}\|\mu\|_{C^7}) \\ O(e^{14}\|\mu\|_{C^7})\end{array}\right).
\end{split}\]
{This matrix is invertible\footnote{By means of Mathematica, one can compute that its determinant is 
$-2.5\times 10^{-6}e^{12} $.} and, using Theorem \ref{thm:inv-matrix} in Appendix \ref{appendix-matrix}, we can compute 
the first  row of its inverse, which has the form
\begin{equation}\label{hierstruc6}
{\small \left(\begin{array}{ccccccc}O(e^{-2}) & O(e^{-4}) & O(e^{-6}) & O(e^{-8}) & O(e^{-10}) & O(e^{-12}) & O(e^{-12})\end{array}\right).}
\end{equation}}
Hence, we obtain 
$$a_4=O(e^2\|\mu\|_{C^7}).\medskip$$

Similarly {(see again Sections \ref{subsecodd} and \ref{subseceven} for  more precise computations)}, one can prove that
$$
b_k=O(e^2\|\mu\|_{C^7}),\quad k=3,4,5.
$$\medskip

\section{The general cases}\label{case-general}
{In the previous sections, we have described how to recover 
the missing relations in the  cases $q_0=3,4,5$. Clearly, 
the same set of ideas can be implemented  for any $q_0 \geq 6$. 
In this section we aim to outline the procedure for proving 
these results in the general case.}

Let $q_0\geq 6$ and let us consider  a $q_0$-rationally integrable domain $\Omega$, 
whose boundary is close to an ellipse $\cE_e$ (we use the normalization $c=e$)
$$
\partial \Omega=\cE_e+\mu(\varphi).
$$
Let
$$\mu(\varphi)=\mu'_0+\sum_{k=1}^{+\infty}a_k\cos (k\varphi) +b_k\sin (k\varphi),$$
and assume 
$$
\|\mu\|_{C^1}\leq e^{6q_0}.\medskip
$$

 Without loss of generality, we assume that $q_0$ is an even integer, {\it i.e.}, {$$q_0=2k_0,\quad \text{with}\quad k_0\geq 3.$$}

 Let us  outline {this inductive procedure} to show that
$$a_k, b_k=O(e^2\|\mu\|_{C^{5k_0}}), \quad k=3, \dots, q_0=2k_0.$$

{The proof of this claim will be detailed in the following subsections (see Proposition \ref{basic-estimate} for a more precise summarizing statement). }\medskip

\medskip

Let us start with the following Lemma.

\begin{lemma}\label{Fourier5} 
% \marginnote{\textr{Should we remove the third lines in the parentheses? We don't really need them, I wrote them for completeness.}\textb{I think  we should}}
$$
a_{2k+1}=\left\{
\begin{array}{ll}
O(e^{2(k-k_0)+2}\|\mu\|_{C^{{k_0+1}}}) & \mbox{if}\;k=k_0,\dots, 2k_0-1,\\ 
O(e^{2k_0+2}\|\mu\|_{C^{{k_0+1}}}) & \mbox{if }\; k={2k_0},\dots {3k_0},\\
{O(e^{2(4k_0-k)+2}\|\mu\|_{C^{k_0+1}})} & {\mbox{if}\;k=3k_0+1,\dots, 4k_0}.
\end{array}
\right.
$$
and   %\textb{Here, we need to take $k$ up to $5k_0+1$.}}}
$$
a_{2k}=\left\{
\begin{array}{ll}
O(e^{{2(k-k_0)}}\|\mu\|_{C^{{2k_0+1}}}) & \mbox{if}\;k=k_0+1,\dots, 3k_0+1,\\
O(e^{4k_0+2}\|\mu\|_{C^{{2k_0+1}}}) & \mbox{if }\; k=3k_0+2,\dots, 6k_0+1,\\
{O(e^{2(8k_0-k)+4} \|\mu\|_{C^{{2k_0+1}}})} & {\mbox{if}\; k=6k_0+2,\dots, 8k_0+1}.
\end{array}
\right.$$
\end{lemma}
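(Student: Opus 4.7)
The plan is to extend the argument of Lemmata \ref{fourier2}, \ref{fourier41}, and \ref{fourier3} to arbitrary $k_0\ge 3$, using only integrable rational caustics of rotation number $1/q$ with $q>q_0=2k_0$; the more delicate rotation numbers of the form $2/(2q+1)$ or $3/(2q)$ are reserved for the later sections that recover the lost annihilation conditions. For every such caustic, the identity \eqref{fourier1} with $p=1$, combined with Lemma \ref{rot-no1} and the hypothesis $\|\mu\|_{C^1}\le e^{6q_0}$ (which renders the quadratic error $\lambda_{1/q}^{-1}q^7\|\mu\|_{C^1}^2$ negligible), yields for every $N<q/2$
\[
a_q \;+\; \sum_{n=1}^{N}\sum_{|l|\le n}\frac{\xi_{n,l}(q-2l)\,e^{2n}}{\cos^{2n}(\pi/q)}\bigl(1+O(e^{2})\bigr)\,a_{q-2l}
\;=\;O\!\left(e^{2N+2}\|\mu\|_{C^{N+1}}\right),
\]
together with the analogous identity for the Fourier coefficients $b_q$.

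First I would treat the odd-index case. I would collect these identities for every odd $q$ in the range $[2k_0+1,\,8k_0+1]$, that is, $3k_0+1$ caustics, choosing in each the largest admissible $N$, and view the result as a linear system in the $3k_0+1$ unknowns $\{a_{2k+1}:\,k_0\le k\le 4k_0\}$. Coefficients $a_j$ with $j>8k_0+1$ appearing in off-diagonal positions are absorbed into the error either by applying the same identity once more at a caustic with still larger $q$, or by invoking the direct Fourier estimate $|a_j|\le \|\mu\|_{C^{k_0+1}}/j^{\,k_0+1}$. The coefficient matrix has $1$'s on the diagonal and entries of size $\xi_{|l|,l}(q-2l)\,e^{2|l|}/\cos^{2|l|}(\pi/q)$ at distance $|l|$ from the diagonal, exactly the ``hierarchical-in-$e$'' structure of the matrices \eqref{linear-system-3}--\eqref{linear-system-5} and \eqref{big1-section6}. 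Inverting via Theorem \ref{thm:inv-matrix} of Appendix \ref{appendix-matrix}, one reads off the rows of the inverse, each with an explicit $e^{-2m}$-profile of the type \eqref{hierstruc1}--\eqref{hierstruc6}. Combining this with the $e^{2N(q)+2}$ size of the right-hand side errors reproduces the three-regime pattern stated in the Lemma: the plateau value $e^{2k_0+2}$ in the middle corresponds to the maximal admissible $N$, while the boundary decay on either side reflects the fact that for $q$ near $2k_0+1$ or near $8k_0+1$, fewer off-diagonal amplifications survive the matrix inversion.

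The even-index bounds are obtained by the exact same procedure, now collecting identities \eqref{fourier1} for even $q$ in the range $[2k_0+2,\,16k_0+2]$; this produces a system of $7k_0+1$ equations in the unknowns $\{a_{2k}:\,k_0+1\le k\le 8k_0+1\}$, of the same hierarchical shape, whose inversion yields the second half of the Lemma. The bounds for $b_q$ are derived identically by reading off the $\sin$-components rather than the $\cos$-components of \eqref{con1}.

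The main obstacle, and the only genuinely delicate step, is verifying for all $k_0\ge 3$ the invertibility and the claimed $e$-asymptotics of the inverse of the coefficient matrix. After rescaling rows and columns by appropriate powers of $e$, this reduces to showing that a concrete matrix, whose entries are rational functions of $\cos(\pi j/q)$ together with the combinatorial coefficients $\xi_{n,l}$ from Appendix \ref{e-expansion-sec}, is non-degenerate with controlled minors. For $q_0=4,5$ this was verified by direct computation (see Sections \ref{case4} and \ref{case5}); in the general case it requires a size-independent extension of Theorem \ref{thm:inv-matrix} together with an inductive combinatorial argument on the diagonal blocks --- a strictly easier analogue of the non-degeneracy conditions \eqref{matrixAodd}--\eqref{matrixEven} which are used later in the paper for the more complicated caustics of rotation numbers $2/(2q+1)$ and $3/(2q)$.
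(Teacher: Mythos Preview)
Your proposal sets up a large square linear system and tries to invert it, treating the non-degeneracy of the rescaled coefficient matrix as ``the main obstacle.'' This misses the key structural point and makes the argument harder than it is. The paper's proof is a direct bootstrapping iteration, with no matrix inversion at all: first apply \eqref{fourier1} with $N=0$ to every caustic $1/(2k+1)$, $k_0\le k\le 4k_0$, to get $a_{2k+1}=O(e^2\|\mu\|_{C^1})$; then apply \eqref{fourier1} with $N=1$ on the slightly smaller range $k_0+1\le k\le 4k_0-1$, using the step-$0$ bounds to absorb every term in the sum into the $O(e^4)$ remainder; then $N=2$ on $k_0+2\le k\le 4k_0-2$; and so on up to $N=k_0$. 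At each stage the range of $k$ shrinks by one from each end, and the three-regime pattern in the statement is exactly the record of how far each index $k$ survives in this shrinking process. The even case is identical with $2k_0$ iterations.

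The reason this works without any non-degeneracy hypothesis is that, unlike the matrices \eqref{linear-system-3}--\eqref{big1-section6} and \eqref{matrixAodd}--\eqref{matrixEven}, here every unknown $a_{2k+1}$ is the leading term of its own equation (the caustic $1/(2k+1)$ exists since $2k+1>q_0$), so the coefficient matrix is literally $I+O(e^2)$ --- in particular invertible for small $e$ with inverse $I+O(e^2)$, \emph{not} with the $e^{-2m}$-profile you describe. The genuine non-degeneracy issues in the paper arise only later, for the systems built from caustics $2/(2q+1)$ and $3/(2q)$, precisely because there the unknowns $a_3,\dots,a_{q_0}$ have no ``diagonal'' equation of their own and the $1$'s in the matrix are displaced off the diagonal. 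Your analogy with \eqref{linear-system-3}--\eqref{linear-system-5} therefore points in the wrong direction: Lemma~\ref{Fourier5} is the easy preliminary step, and the iteration makes this transparent.
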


\medskip

{
\brm
Notice that only the first two items in each bracket are really used for our proof. However, we report also the others for the sake of completeness.
\erm}

\medskip

%
%
%$$
%a_{2k+1}=O(e^{2(k-k_0)+2}\|\mu\|_{C^{4k_0}}), \quad k=k_0,\dots, 2k_0,
%$$
%and
%$$
%a_{2k+1}=O(e^{2k_0+2}\|\mu\|_{C^{4k_0}}),\quad k=2k_0+1,\dots 3k_0,
%$$
%and
%$$
%a_{2k}=O(e^{2(k-k_0)}\|\mu\|_{C^{5k_0}}), \quad k=k_0+1,\dots, 3k_0+1,
%$$
%$$
%a_{2k}=O(e^{4k_0+2}\|\mu\|_{C^{5k_0}}),\quad k=3k_0+2,\dots, 6k_0.
%$$

\begin{proof}
{Let us start by proving the estimates for the Fourier coefficients of odd order. 
The proof consists in an iterative application of equality \eqref{fourier1}.}

From the existence of { integrable rational} 
caustics with  rotation numbers $\frac{1}{2k+1}$, with $k=k_0,\dots, 4k_0$ 
{(observe that this choice ensures that $\frac{1}{2k+1}<\frac{1}{q_0}$)},
using equality \eqref{fourier1} with $N=0$, we easily get:
\begin{equation}\label{iteration1_lemma7.1}
a_{2k+1}=O(e^2\|\mu\|_{C^1}),\quad k=k_0,\dots, 4k_0.\\
\end{equation}

\smallskip

{Let us now consider \eqref{fourier1} with $N=1$, for  rotation numbers $\frac{1}{2k+1}$, where $k=k_0+1,\ldots, 4k_0-1$:}
{\begin{eqnarray*}
a_{2k+1} &=& - \sum_{l= - 1}^1\xi_{1,l}(2(k-l)+1) \,a_{2(k-l)+1}\,\frac{e^{2}}{1-\lambda_{1/{2k+1}}^2} +O(e^{4}\|\mu\|_{C^2})   \\
&=& - \sum_{l= - 1}^1\xi_{1,l}(2(k-l)+1) \,a_{2(k-l)+1}\,\frac{e^{2}}{\cos^2\left({\frac{\pi}{2k+1}}\right)} +O(e^{4}\|\mu\|_{C^2}), 
\end{eqnarray*}
where in the last equality we have used Lemma \ref{rot-no1} 
which implies
\begin{equation}\label{expansionlambda}
1-\lambda_{1/{2k+1}}^2 = \cos^2{\frac{\pi}{2k+1}} + O(e^2).
\end{equation}
Observe now that, since $|l|\leq 1$, then $ k_0\leq k-l \leq 4k_0$, hence we can use estimates \eqref{iteration1_lemma7.1} and obtain:
$$
a_{2k+1}=O(e^4\|\mu\|_{C^2}),\quad k=k_0+1,\dots, 4k_0-1.\\
$$
}

{In order to prove the claim, we need to iterate the same argument until $N= k_0$.}\\

{ Let us describe how the inductive procedure works.
Suppose that we have already iterated the same argument for $N=1,\ldots,N_0 < k_0$; then, we have obtained:
\begin{equation}\label{induction1_lemma7.1}
a_{2k+1}=O(e^{2N_0+2}\|\mu\|_{C^{N_0+1}}),\quad k=k_0+N_0,\dots, 4k_0-N_0.
\end{equation}
Observe that  if $k_0\leq k<k_0+N_0$, then the index $k$ has been involved until the iteration corresponding to $N=k-k_0$; hence
\begin{equation}\label{induction2_lemma7.1}
a_{2k+1}=O(e^{2(k-k_0)+2}\|\mu\|_{C^{(k-k_0)+1}}),\quad k_0 \leq k < k_0+N_0.
\end{equation}
Similary, if $4k_0-N_0< k \leq 4k_0$, then the index $k$ has been involved until the iteration corresponding to $N=4k_0-k$; hence
\begin{equation}\label{induction3_lemma7.1}
a_{2k+1}=O(e^{2(4k_0-k)+2}\|\mu\|_{C^{(4k_0-k)+1}}),\quad 4k_0-N_0< k \leq 4k_0.\\
\end{equation}
}

{Apply now \eqref{fourier1} with $N=N_0+1$ and  rotation numbers 
$\frac{1}{2k+1}$, with $k=k_0+N_0+1,\ldots, 4k_0-N_0-1$. Then:}
{\begin{eqnarray}
a_{2k+1} &=& - \sum_{n=1}^{N_0+1} \sum_{|l|\leq n}\xi_{n,l}(2(k-l)+1) \,a_{2(k-l)+1}\,\frac{e^{2n}}{(1-\lambda_{1/{2k+1}}^2)^n} \nonumber\\
&& \quad +O(e^{2(N_0+1)+2}\|\mu_{\varepsilon}\|_{C^{N_0+2}}).   \label{induction4_lemma7.1}
\end{eqnarray}
We want to show that all terms in this sum can be included in the remainder. Let us distinguish several cases:
\begin{itemize}
\item $l=0$ appears for all $n\geq 1$ and, using \eqref{induction1_lemma7.1}, we conclude:
$$
a_{2k+1}\,\frac{e^{2n}}{(1-\lambda_{1/{2k+1}}^2)^n} = O(e^{2(N_0+1)+2}\|\mu\|_{C^{N_0+1}}).
$$
\item $0<l\leq N_0+1$ appears for all $n\geq l$; 
using \eqref{induction1_lemma7.1}, \eqref{induction2_lemma7.1} and \eqref{expansionlambda}, 
we can conclude:
\begin{eqnarray*}
&& a_{2(k-l)+1}\,\frac{e^{2n}}{(1-\lambda_{1/{2k+1}}^2)^n} \\
&&\quad = \; O(e^{2(k-l-k_0)+2}\|\mu\|_{C^{N_0+1}}) \cdot O(e^{2l}) \cdot \big(1+O(e^2)\big) \\
&&\quad = \; O(e^{2(k-k_0)+2}\|\mu\|_{C^{N_0+1}})\cdot \big(1+O(e^2)\big) \\
&&\quad = \; O(e^{2(N_0+1)+2}\|\mu\|_{C^{N_0+1}}) \cdot \big(1+O(e^2)\big)\\
&&\quad = \; O(e^{2(N_0+1)+2}\|\mu\|_{C^{N_0+1}}),
\end{eqnarray*}
where, in the second-last equality, we have used that $k\geq k_0+N_0+1$.
\item $0<-l\leq N_0+1$ appears for all $n\geq -l$; using \eqref{induction1_lemma7.1}, \eqref{induction3_lemma7.1} and \eqref{expansionlambda}, we can conclude:
\begin{eqnarray*}
&& a_{2(k+l)+1}\,\frac{e^{2n}}{(1-\lambda_{1/{2k+1}}^2)^n} \\
&&\quad = \; O(e^{2(4k_0-k+l)+2}\|\mu\|_{C^{N_0+1}}) \cdot O(e^{-2l}) \cdot \big(1+O(e^2)\big) \\
&&\quad = \; O(e^{2(4k_0-k)+2}\|\mu\|_{C^{N_0+1}})\cdot \big(1+O(e^2)\big) \\
&&\quad = \; O(e^{2(N_0+1)+2}\|\mu\|_{C^{N_0+1}}) \cdot \big(1+O(e^2)\big)\\
&&\quad = \; O(e^{2(N_0+1)+2}\|\mu\|_{C^{N_0+1}}),
\end{eqnarray*}
where, in the second-last equality, we have used that $k\leq 4k_0-N_0-1$.%
\end{itemize}
It follows from these estimates and \eqref{induction4_lemma7.1}  that
$$ a_{2k+1} = O(e^{2(N_0+1)+2}\|\mu_{\varepsilon}\|_{C^{N_0+2}}) \qquad {\rm for}\; k=k_0+N_0+1,\ldots, 4k_0-N_0-1.\\$$
}

{The claim of the theorem then follows by taking $N_0=k_0$ in \eqref{induction1_lemma7.1}, \eqref{induction2_lemma7.1} and \eqref{induction3_lemma7.1}.}\\

Similarly, one proves the relations corresponding to Fourier coefficients of  even order. 
%\marginnote{\textg{The outcomes for odd and even differ %noticably. I don;t think it is fair to leave it like that! }}
{More specifically, one considers
{ integrable rational}  caustics with  rotation numbers $\frac{1}{2k}$, with $k=k_0+1,\dots, 8k_0+1$. As in the previous part, the proof consists in an iterative application of \eqref{fourier1}; in particular, in this case the number of needed iterations equals $2k_0$ (from which the appearance of the $C^{2k_0+1}$-norm). }\\
\end{proof}

\medskip

%\textb{(TO BE REMOVED) For this lemma, $C^{k_0+1}$-norm and $C^{2k_0+1}$ norm are enough. However, for the iteration, $C^{3k_0}$ and $C^{5k_0}$ are needed. I think it is better to choose higher regularity norm once for all, to avoid too many indexes.} \marginnote{\textr{I would leave lemma 7.1. as it is, and then in section 7.1 consider higher norms, see remark 7.2 }}
%\medskip

{Now, we want to describe how to recover the missing relations. 
We distinguish between Fourier coefficients corresponding to 
Fourier modes of, respectively, odd and even order.}\medskip

\subsection{Fourier coefficients of odd order Fourier modes} \label{subsecodd}
Let us prove that for every integer ${1} \leq m\leq k_0$, we have  that    
\begin{equation}\label{assumption-i}a_{2k+1}=
\begin{cases}O(e^{2(k-m)+2}\|\mu\|_{C^{{3k_0}}}), & k \in[m, 3k_0-m),\\
O(e^{2k_0+4(k_0-m)+2}\|\mu\|_{C^{{3k_0}}}),& k\in[3k_0-m, {6k_0-3m}]. \\
\end{cases}\end{equation}

\medskip

{\begin{remark}
The above estimates hold with sharper choices of the norms $\|\cdot\|_{C^k}$ (see \eqref{fourier1} and  Lemma \ref{Fourier5}). However, for the sake of simplicity we have opted for a common choice that is suitable for all steps involved in the  algorithm (see Remark \ref{choicenorm}).\\
\end{remark}}

{The argument below consists in a finite (backward) induction:
the first step corresponds to $m=k_0$, while the final one to $m=2$.}
{Observe that  Lemma \ref{Fourier5} implies  \eqref{assumption-i} for $m=k_0$. 
 Let us assume that \eqref{assumption-i} holds for a given $ 2\leq m \leq k_0$ (inductive hypothesis) and let us prove it for $m-1$.\\
}

{We denote $N(k):=k-m+1$.}\\

%Notice that $$2k+1-2N(k)=2m-1.$$
{ Let us fix $k\in \{k_0,\dots, 3k_0-m\}$; observe that for such a choice of $k$, there exists 
 an integrable rational caustic with rotation number  $\frac{1}{2k+1}$. Let us now apply  \eqref{fourier1} with $N=N(k)$:
\begin{equation}\label{eq1sec7.1}
a_{2k+1}+ \sum_{n=1}^{N(k)} \sum_{|l|\leq n} \frac{\xi_{n,l}(2(k-l)+1)\, e^{2n}}{(1-\lambda_{\frac{1}{2k+1}}^2)^n} a_{2(k-l)+1}=O(e^{2N(k)+2}\|\mu\|_{C^{N(k)+1}}).
\end{equation}}

{\begin{remark}\label{choicenorm}
Notice that all estimates involve $\|\mu\|_{C^{k+1}}$ and $\|\mu\|_{C^{N(k)+1}}$, for some $k\leq3k_0-m$ and $m\geq 1$; in particular,  
$N(k)\leq 3k_0-m+1 < 3k_0$.
Hence, we can choose to bound all terms with respect to $\|\mu\|_{C^{3k_0}}$. Hereafter, in order to simplify the notation, we will neglect this term and concentrate on
the part involving powers of the eccentricity $e$.
\end{remark}}

{Now we want to show that in \eqref{eq1sec7.1} the only terms in the sum that are not of the same order as the remainder are the ones corresponding to $l=n$.}\\

{
\begin{itemize}
\item Observe that if $0\leq l \leq n-1$, then 
$$
k-l \geq k-(N(k)-1) = m
$$
and, since $k_0\leq k \leq 3k_0-m$, we also have
$$
k-l \leq 3k_0-m.
$$
Using the inductive hypothesis, the fact that $0\leq l\leq n-1$ and \eqref{expansionlambda}, we 
get:
\begin{eqnarray*}
\frac{ a_{2(k-l)+1}}{(1-\lambda_{\frac{1}{2k+1}}^2)^n}  e^{2n} &=& O(e^{2(k-l-m)+2}) \cdot O(e^{2l+2}) \cdot (1+ O(e^2))\\
&=& O(e^{2(k-m+1)+2}) \cdot (1+ O(e^2)) \\
&=& O(e^{2N(k)+2}).
\end{eqnarray*}
\item Let us now consider negative $l$. \\
\medskip
First observe that if $l=-N(k)$, then clearly  
\begin{eqnarray*}
\frac{ a_{2(k+N(k))+1}}{(1-\lambda_{\frac{1}{2k+1}}^2)^{N(k)}}  e^{2N(k)} = O(e^{2N(k)+2}),
\end{eqnarray*}
where we have used that 
$a_{2(k+N(k))+1}=O(e^2)$, as it follows applying \eqref{fourier1} with $N=0$ (in fact, since $k+N(k)\geq k_0$, there exists by assumption an integrable rational caustic of rotation number $\frac{1}{2(k+N(k))+1}$).\\ 
\medskip
Let us now assume that $-N(k)+1\leq l <0$.\\
If $k+m-3k_0 \leq  l <0$, then $ k_0< k-l \leq 3k_0-m$; hence, using the inductive hypothesis we get:
\begin{eqnarray*}
\frac{ a_{2(k-l)+1}}{(1-\lambda_{\frac{1}{2k+1}}^2)^n}  e^{2n} &=& O(e^{2(k-l-m)+2}) \cdot O(e^{2}) \cdot (1+ O(e^2))\\
&=& O(e^{2(k-m+1)+2}) \cdot (1+ O(e^2)) \\
&=& O(e^{2N(k)+2}).
\end{eqnarray*}
On the other hand, if $ -n\leq l < k+m-3k_0$, then 
$$   k-l> 3k_0-m \quad {\rm and} \quad k-l \leq k+N(k)-1 \leq 6k_0-3m.$$  
Therefore, using the inductive hypothesis we get (we use that $n\geq -l >3k_0-m-k$):
\begin{eqnarray*}
\frac{ a_{2(k-l)+1}}{(1-\lambda_{\frac{1}{2k+1}}^2)^n}  e^{2n} &=& O(e^{2k_0+4(k_0-m)+2}) \cdot O(e^{2n}) \cdot (1+ O(e^2))\\
&=&  O(e^{2k_0+4(k_0-m)+2}) \cdot O(e^{6k_0-2m-2k}) \cdot (1+ O(e^2))\\
&=& O(e^{12k_0-6m-2k+2}) \cdot (1+ O(e^2))\\
&=& O(e^{2N(k)+2}) \cdot (1+ O(e^2))
\end{eqnarray*}
where in the last equality we have used that $m\leq k_0$, $k< 3k_0-m$ and therefore
\begin{eqnarray*}
12k_0-6m-2k+2 &=&  2(3k_0 -m) + 2(3k_0-m-k)  - 2m +2 \\
& \geq &  2 (k-m+1) + 2\\
&=& 2N(k)+2.\\
\end{eqnarray*}
\end{itemize}
} 
 
{Using these estimates, we see that \eqref{eq1sec7.1} becomes: 
\begin{eqnarray*}
a_{2k+1}+\sum_{j=1}^{N(k)}\frac{\xi_{j,j}(2k+1-2j)e^{2j}}{(1-\lambda_{\frac{1}{2k+1}}^2)^j}a_{2(k-j)+1}=O(e^{2N(k)+2}\|\mu\|_{C^{3k_0}}).
\end{eqnarray*}
Using Lemma \ref{rot-no1} and the inductive hypothesis, we see that for $j<N(k)$ (which implies $  m\leq k-j < 3k_0-m$), we have:
\begin{eqnarray*}
\frac{a_{2(k-j)+1}\, e^{2j}}{(1-\lambda_{\frac{1}{2k+1}}^2)^j}   &=& \frac{a_{2(k-j)+1}\, e^{2j}}{\cos^{2j}(\frac{\pi}{2k+1})} (1+O(e^2))\\
&=& \frac{a_{2(k-j)+1}\, e^{2j}}{\cos^{2j}(\frac{\pi}{2k+1})} + e^{2j+2}\, O(e^{2(k-j-m)+2})  \\
&=& \frac{a_{2(k-j)+1}\, e^{2j}}{\cos^{2j}(\frac{\pi}{2k+1})} + O(e^{2N(k)+2}).
\end{eqnarray*}
Clearly, for $j=N(k)$
\begin{eqnarray*}
\frac{a_{2(k-N(k))+1}\, e^{2N(k)}}{(1-\lambda_{\frac{1}{2k+1}}^2)^{N(k)}}   &=& 
 \frac{a_{2(k-N(k))+1}\, e^{2N(k)}}{\cos^{2N(k)}(\frac{\pi}{2k+1})} + O(e^{2N(k)+2}).
\end{eqnarray*}
Hence,  \eqref{eq1sec7.1} reduces to
\begin{equation}\label{eqodd1}
a_{2k+1}+\sum_{j=1}^{N(k)}\frac{\xi_{j,j}(2k+1-2j)e^{2j}}{\cos^{2j}(\frac{\pi}{2k+1})}a_{2k+1-2j}=O(e^{2N(k)+2}\|\mu\|_{C^{3k_0}})\\
\end{equation}
for $k=k_0,\ldots, 3k_0-m$.
}

\vspace{10 pt}
 
{If $k\geq 2k_0$,  then $\frac{2}{2k+1}<\frac{1}{2k_0}=\frac{1}{q_0}$.  
Since $\Omega$ is $q_0$-integrable, then we have also the existence of  integrable rational caustics with  rotation numbers 
$\frac{2}{2k+1}$. Hence, proceeding as above, for $k=2k_0,\dots, 3k_0-m$, we get 
%\begin{equation}\label{eqodd2}
%a_{2k+1}+\sum_{j=1}^{N(k)}
%\frac{\xi_{j,j}(2k+1-2j)e^{2j}}{\cos^{2j}(\frac{\pi}{2k+1})}a_{2k+1-2j}=O(e^{2N(k)+2}\|\mu\|_{C^{4k_0}})
%\end{equation}
%and
\begin{equation}\label{eqodd3}
a_{2k+1}+\sum_{j=1}^{N(k)}
\frac{\xi_{j,j}(2k+1-2j)e^{2j}}{\cos^{2j}(\frac{2\pi}{2k+1})}a_{2(k-j)+1}=O(e^{2N(k)+2}\|\mu\|_{C^{3k_0}})
\end{equation}
for $k=2k_0,\ldots, 3k_0-m$.\\}

\vspace{10 pt}

\begin{remark}\label{remarkmatrixAodd}
{We obtain $3k_0-2m+2$ linear equations with $3k_0-2m+2$ unknown variables: $a_{2{m}-1},\; a_{2m+1},\dots,a_{2(3k_0-m)+1}.$
Let us consider the  system of linear equations consisting of: 
\begin{itemize}
\item the $k_0$ equations corresponding to \eqref{eqodd1} for $k=k_0,\ldots, 2k_0-1$;
\item the $(k_0-m+1)$ couples of equations corresponding to \eqref{eqodd1} and \eqref{eqodd3} for $k=2k_0,\ldots, 3k_0-m.$
\end{itemize}
Recall that $q_0=2k_0$. Denote by $\cA_{q_0,m}^{\tiny(odd)}\in {\cM}_{3k_0-2m+2}(\R)$ 
the square matrix of the coefficients associated to this  system.
In particular,
the matrix $\cA_{q_0,m}^{\tiny(odd)}$ has the following structure 
\begin{equation}\label{matrixAodd}
\cA_{q_0,m}^{\tiny (odd)} = 
\left(
\begin{array}{c|c|c}
* & {\mathcal L} & {\mathcal O}  \\
\hline
* & * & {\mathcal K} 
\end{array}
\right)
\end{equation}
where
\begin{itemize}
\item ${\mathcal L}$ is a lower triangular $k_0\times k_0$ matrix with $1$'s on the diagonal;
\item ${\mathcal K}$ is a $(k_0-m+1)\times 2(k_0-m+1)$ matrix of the form
{\small
$$
{\mathcal K}= 
\left(
\begin{array}{ccccc}
1 & 0 & 0 & \ldots & 0 \\
1 & 0 & 0 & \ldots & 0 \\
* &1 & 0 & \ldots & 0 \\
* &1 & 0 & \ldots & 0 \\
\vdots & \vdots &  \vdots & \vdots &\vdots  \\
%&&& \ldots\medskip
%
* &* & * & \ldots &  1\\
* &* & * & \ldots &  1
\end{array}
\right);
$$}
\item ${\mathcal O}$ is a block of zeros of size $k_0 \times (k_0-m+1)$;
\item observe that each row has a unit on it and the $*$ entries are a multiple of $e^{2j}$, where $j \in \N$ represents the  ``distance'' from the unit within the row;
in particular all $*$ entries are of the form ${\xi}\,{ \cos^{-2j} ( w \pi )  e^{2j}}$, where $\xi \in \Q$,  $w\in{ \{\frac{1}{2k+1}, \frac{2}{2k+1}: \; k>j \}}$.
\item 
{Notice that this hierarchical structure of the powers of $e$ in a given row/column,  implies a similar hierarchical 
structure for the rows of its inverse, as we have already pointed out in  
\eqref{hierstruc1},  \eqref{hierstruc2}, \eqref{hierstruc3}, \eqref{hierstruc4}, \eqref{hierstruc5} and \eqref{hierstruc6}.}
\end{itemize}
}
\end{remark}

If $\cA_{q_0,m}^{\tiny (odd)}$ is {\it non-degenerate},  then solving this system of linear equations, 
we  obtain 
$$a_{2k+1}=O(e^{2(k-(m-1))+2}\|\mu\|_{C^{{3k_0}}}),\quad k=m-1,m,\dots, k_0.$$  
With this new relation, using the arguments in Lemma \ref{Fourier5}, we show that 
 assumption \eqref{assumption-i} still holds by replacing $m$ with  $m-1$. 
{This completes the proof of the inductive step}. \\

Iterating  the procedure until {$m=1$},  we conclude that
$$a_{2k+1}=O(e^2\|\mu\|_{C^{{3k_0}}}),\quad k=1,\dots, k_0-1.$$
In the same way, we may show that $$b_{2k+1}=O(e^2\|\mu\|_{C^{{3k_0}}}),\quad k=1,\dots, k_0-1.$$\\

\medskip

%%%%%%%%%%%%%

\subsection{Fourier coefficients of even order Fourier modes} \label{subseceven}
Let $1\leq m\leq k_0$. Denote $N_m= 3k_0+ 3\left\lfloor \frac{k_0-m}{2} \right\rfloor + \nu_{k_0,m}$, where
\[
\nu_{k_0,m}:=\begin{cases}1&\mbox{if $k_0-m$ is even}\\
2 &\mbox{if $k_0-m$ is odd}\end{cases}
\]
{and $\lfloor\cdot \rfloor$ denotes the floor function. This choice of $N_m$ will be clarified in Remark \ref{whyNm}.}\\

Assume  that for  some ${1}\leq m\leq k_0$
we have 
\begin{equation}\label{assumption-ii}
a_{2k}=\left\{\begin{array}{ll}
O(e^{2(k-m)}\|\mu\|_{C^{5k_0}}),& k=m+1,\dots, {N_{m}},\\
O(e^{2(N_m-m+1)}\|\mu\|_{C^{5k_0}}),& k= {N_m+1},\dots, {2{N_m}-m}.
\end{array}
\right.
\end{equation}%\marginnote{\textb{$2(2N_m-m)=2( N_m+N'(N_m))$, which is the highest Fourier involves in this process.}}

\medskip

{\begin{remark}
The above estimates hold with sharper choices of the norms $\|\cdot\|_{C^k}$ (see \eqref{fourier1} and  Lemma \ref{Fourier5}). However, for the sake of simplicity we have opted for a common choice that is suitable for all steps involved in the  algorithm (see Remark \ref{choicenorm2}).\\
\end{remark}}

Observe that Lemma \ref{Fourier5} implies  the assumption above  for $m=k_0$.\\

{We denote $N'(k):=k-m$.}\\

{ Let us fix $k\in \{k_0+1,\dots, N_m\}$; observe that for such a choice of $k$, there exists 
 an integrable rational caustic with rotation number  $\frac{1}{2k}$. Let us now apply  \eqref{fourier1} with $N=N'(k)$:
\begin{equation}\label{eq1sec7.2}
a_{2k}+ \sum_{n=1}^{N'(k)} \sum_{|l|\leq n} \frac{\xi_{n,l}(2(k-l))\, e^{2n}}{(1-\lambda_{\frac{1}{2k}}^2)^n} a_{2(k-l)}=O(e^{2N'(k)+2}\|\mu\|_{C^{N'(k)+1}}).
\end{equation}}

{\begin{remark}\label{choicenorm2}
Notice that all estimates involve $\|\mu\|_{C^{k+1}}$ and $\|\mu\|_{C^{N'(k)+1}}$, for some $k\leq N_m$ and $m\geq 1$; in particular,  
$N'(k)+1\leq N_m+1 \leq 5k_0$ (as one can easily verify, by choosing $m=1$ and estimating the corresponding expression  both for $k_0 \geq 2$ even or odd).
Hence, we can choose to bound all terms with respect to $\|\mu\|_{C^{5k_0}}$. Hereafter, in order to simplify the notation, we will neglect this term and concentrate on
the part involving powers of the eccentricity $e$.\\
\end{remark}}

%\begin{equation}\label{eqeven1}
%a_{2k}+\sum_{j=1}^{N'(k)}\frac{\xi_{j,j}(2k-2j)e^{2j}}{\cos^{2j}(\frac{\pi}{2k})}a_{2k-2j}=O(e^{2N'(k)+2}\|\mu\|_{C^{5k_0}}).
%\end{equation}

{Similarly to what we have done in the odd-order case, we want to show that in \eqref{eq1sec7.2} the only terms in the sum that are not of the same order as the remainder are the ones corresponding to $l=n$.}\\

{
\begin{itemize}
\item Observe that if $0\leq l \leq n-1$, then 
$$
k-l \geq k-(N'(k)-1) = m+1
$$
and clearly $k-l \leq N_m$.
Using the inductive hypothesis, the fact that $0\leq l\leq n-1$ and \eqref{expansionlambda}, we 
get:
\begin{eqnarray*}
\frac{ a_{2(k-l)}}{(1-\lambda_{\frac{1}{2k}}^2)^n}  e^{2n} &=& O(e^{2(k-l-m)}) \cdot O(e^{2l+2}) \cdot (1+ O(e^2))\\
&=& O(e^{2(k-m)+2}) \cdot (1+ O(e^2)) \\
&=& O(e^{2N'(k)+2}).
\end{eqnarray*}
\item Let us now consider negative $l$. \\
\medskip
First observe that if $l=-N'(k)$, then clearly  
\begin{eqnarray*}
\frac{ a_{2(k+N(k))}}{(1-\lambda_{\frac{1}{2k}}^2)^{N'(k)}}  e^{2N'(k)} = O(e^{2N'(k)+2}),
\end{eqnarray*}
where we have used that 
$a_{2(k+N'(k))}=O(e^2)$, as it follows applying \eqref{fourier1} with $N=0$ (in fact, since $k+N'(k)\geq k_0$, there exists by assumption an integrable rational caustic of rotation number $\frac{1}{2(k+N'(k))}$).\\ 
\medskip
Let us now assume that $k-N_m \leq l <0$, hence $ m+1\leq k-l \leq N_m$. Using the inductive hypothesis we get:
\begin{eqnarray*}
\frac{ a_{2(k-l)}}{(1-\lambda_{\frac{1}{2k}}^2)^n}  e^{2n} &=& O(e^{2(k-l-m)}) \cdot O(e^{2}) \cdot (1+ O(e^2))\\
&=& O(e^{2(k-m)+2}) \cdot (1+ O(e^2)) \\
&=& O(e^{2N'(k)+2}).
\end{eqnarray*}
On the other hand, if $ -n\leq l < k-N_m$, then 
$$  k-l\geq N_m+1 \quad {\rm and} \quad k-l \leq k+N'(k) \leq 2N_m-m.$$  
Therefore, using the inductive hypothesis we get (we use that $n\geq -l \geq N_m-k \geq 0$):
\begin{eqnarray*}
\frac{ a_{2(k-l)}}{(1-\lambda_{\frac{1}{2k}}^2)^n}  e^{2n} &=& O(e^{2(N_m-m+1)}) \cdot O(e^{2n}) \cdot (1+ O(e^2))\\
&=&  O(e^{2(N_m-m+1)}) \cdot O(e^{2(N_m-k)}) \cdot (1+ O(e^2))\\  
&=& O(e^{2(2N_m-m-k)+2}) \cdot (1+ O(e^2))\\
&=& O(e^{2N'(k)+2}). 
\end{eqnarray*}
\end{itemize}
}

{Using these estimates, we see that \eqref{eq1sec7.2} becomes: 
\begin{eqnarray*}
a_{2k}+\sum_{j=1}^{N'(k)}\frac{\xi_{j,j}(2k-2j)e^{2j}}{(1-\lambda_{\frac{1}{2k}}^2)^j}a_{2(k-j)}=O(e^{2N'(k)+2}\|\mu\|_{C^{5k_0}}).
\end{eqnarray*}
Using Lemma \ref{rot-no1} and the inductive hypothesis, we see that for $j<N'(k)$ (which implies $  m+1\leq k-j < N_m$), we have:
\begin{eqnarray*}
\frac{a_{2(k-j)}\, e^{2j}}{(1-\lambda_{\frac{1}{2k}}^2)^j}   &=& \frac{a_{2(k-j)}\, e^{2j}}{\cos^{2j}(\frac{\pi}{2k})} (1+O(e^2))\\
&=& \frac{a_{2(k-j)}\, e^{2j}}{\cos^{2j}(\frac{\pi}{2k})} + e^{2j+2}\, O(e^{2(k-j-m)})  \\
&=& \frac{a_{2(k-j)+1}\, e^{2j}}{\cos^{2j}(\frac{\pi}{2k})} + O(e^{2N'(k)+2}).
\end{eqnarray*}
Clearly, for $j=N'(k)$
\begin{eqnarray*}
\frac{a_{2(k-N'(k))}\, e^{2N'(k)}}{(1-\lambda_{\frac{1}{2k}}^2)^{N'(k)}}   &=& 
 \frac{a_{2(k-N'(k))}\, e^{2N'(k)}}{\cos^{2N'(k)}(\frac{\pi}{2k})} + O(e^{2N'(k)+2}).
\end{eqnarray*}
Hence,  \eqref{eq1sec7.2} reduces to
\begin{equation}\label{eqeven1}
a_{2k}+\sum_{j=1}^{N'(k)}\frac{\xi_{j,j}(2k-2j)e^{2j}}{\cos^{2j}(\frac{\pi}{2k})}a_{2(k-j)}=O(e^{2N'(k)+2}\|\mu\|_{C^{5k_0}}),
\end{equation}
for $k=k_0+1,\ldots, N_m$.\\
}

{If $k\geq 3k_0+1$, since $\Omega$ is $q_0$-integrable (recall that $q_0=2k_0)$, then we also have the existence of  integrable rational caustics with rotation numbers 
$\frac{3}{2k}$. \\
In particular, let $2k\not\equiv 0\;(\text{mod.}\; 3)$, with { $k=3k_0+1,\dots, N_{m}$}. Proceeding as above, using the existence of a caustic with rotation number 
$\frac{3}{2k}$, we can conclude that:
\begin{equation}\label{eqeven2}
a_{2k}+\sum_{j=1}^{N'(k)}\frac{\xi_{j,j}(2k-2j)e^{2j}}{\cos^{2j}(\frac{3\pi}{2k})}a_{2k-2j}=O(e^{2N'(k)+2}\|\mu\|_{C^{5k_0}}).\\
\end{equation}}

{\begin{remark}\label{whyNm}
We obtain {$N_m-m+1$} linear equations in  {$N_m-m+1$} variables: $a_{2k}$ with
$k=m,\dots, N_m$.\\  
Observe, in fact, that 
the number $N_m$ was chosen in such a way  that  the number of equations is the same as the number of unknowns. Indeed:
\begin{itemize}
\item For $k= k_0+1, \ldots, 3k_0$, we obtain $2k_0$ equations.
\item For $k= 3k_0+1, \ldots, N_m$, we have {$N_m-3k_0$} values of $k$ which contribute with $2$ equations when $k$ is not a multiple of $3$, and with  only one equation otherwise.
Hence, each group $\{3j+1, 3j+2, 3(j+1)\}$ produces $5$ equations and in our case $j=k_0, \ldots, \lfloor{N_m/3}\rfloor$. 
Let us define $\alpha_m \in\{0,1, 2\}$ such that $N_m = 3\lfloor{N_m/3}\rfloor + \alpha_m$, namely $\alpha_m \equiv N_m$ (mod. $3$).
\item Hence, the number of total equations is
$$
\underbrace{2k_0}_{\tiny \mbox{$1^{\rm st}$ block}} + \underbrace{5\left(\lfloor{N_m/3}\rfloor - k_0 \right) + 2\alpha_m }_{\tiny \mbox{$2^{\rm nd}$ block}}.
$$
\item The number of unknowns that we get is $N_m-m+1$.
\end{itemize}
In conclusion, we want to choose $N_m$ such that
\begin{eqnarray}
&& \quad  2k_0  + 5\left(\lfloor{N_m/3}\rfloor - k_0 \right) + 2\alpha_m  = N_m-m+1 \nonumber\\
&\Longleftrightarrow& \quad
5 \lfloor{N_m/3}\rfloor - 3k_0 + 2\alpha_m = 3 \lfloor{N_m/3}\rfloor + \alpha_m - m +1\nonumber\\
&\Longleftrightarrow& \quad
2 \lfloor{N_m/3}\rfloor   =  3k_0 - m +1-  \alpha_m. \label{eqforNm}
%\medskip
\end{eqnarray}
We want to solve this equation. We  distinguish two cases according to the parity of $3k_0 - m$ (or, equivalently, of $k_0-m$):
\begin{itemize}
\item {If $k_0 - m$ is even,  \eqref{eqforNm} can have an integral solution only if $\alpha_m=1$;} in this case:
$$
\left\lfloor{\frac{N_m}{3}} \right\rfloor = \frac{3k_0-m}{2}
$$
and
$$
N_m = 3 \left\lfloor \frac{3k_0-m}{2} \right\rfloor + 1 = 3k_0 + \left\lfloor \frac{k_0-m}{2} \right\rfloor + 1.
$$
\item Similarly, if $k_0 - m$ is odd, {\eqref{eqforNm} can have an integral solution only if $\alpha_m=0$ or $2$;} 
in case $\alpha_m=2$:
%$$
%\left\lfloor{\frac{N_m}{3}} \right\rfloor = \frac{3k_0-m-1}{2} = \left\lfloor \frac{3k_0-m}{2} \right\rfloor -1
%$$
{\[\left\lfloor{\frac{N_m}{3}} \right\rfloor = \frac{3k_0-m-1}{2} = \left\lfloor \frac{3k_0-m}{2} \right\rfloor \]}
and
%$$
%N_m = 3 \left(\left\lfloor \frac{3k_0-m}{2} \right\rfloor - 1\right) + 2 = 3k_0+3\left\lfloor \frac{k_0-m}{2} \right\rfloor - 1.\medskip
%$$
{\[N_m = 3 \left(\left\lfloor \frac{3k_0-m}{2} \right\rfloor \right) + 2 = 3k_0+3\left\lfloor \frac{k_0-m}{2} \right\rfloor +2.\]}
{Observe that if we choose  $\alpha_m=0$, then we could get  a larger $N_m$, namely
$
3k_0+3 \left\lfloor \frac{k_0-m}{2} \right\rfloor  + 3
$.}
\medskip
\end{itemize}

Summarizing, we choose
%\begin{eqnarray*}
%N_m &:=& \left\{
%\begin{array}{ll}
%3k_0+3 \left\lfloor \frac{k_0-m}{2} \right\rfloor + 1 & \mbox{if $k_0 - m$ is even}\medskip
%3k_0+ 3\left\lfloor \frac{k_0-m}{2} \right\rfloor - 1 & \mbox{if $k_0 - m$ is odd}
%\end{array}
%\right.\medskip
%&=& 3k_0+ 3\left\lfloor \frac{k_0-m}{2} \right\rfloor + (-1)^{k_0-m}.
%\end{eqnarray*}
{\[N_m:=3k_0+3 \left\lfloor \frac{k_0-m}{2} \right\rfloor +\nu_{k_0,m},\]
where 
\[\nu_{k_0,m}=\begin{cases}1 &\mbox{if $k_0-m$ is even}\\
2 &\mbox{if $k_0-m$ is odd.}\end{cases}\]}
{Observe that for $m=k_0$, we have exactly $N_{k_0} = 3k_0+1$, as needed to recover \eqref{assumption-ii} from Lemma 7.1.}
\end{remark}}

\medskip

{\begin{remark}\label{remarkmatrixAeven}
{Recall that $q_0=2k_0$. 
We denote by $\cA_{q_0,m}^{\tiny(even)}\in {\cM}_{N_m-m+1}(\R)$ the square matrix of the coefficients associated to the linear system of equations, consisting of
\begin{itemize}
\item the first $2k_0$ equations correspond to \eqref{eqeven1} for $k=k_0+1,\ldots, 3k_0$,
\item the other $N_m-2k_0$ rows correspond to the  equations \eqref{eqeven1}--\eqref{eqeven2} for $k=3k_0+1, \ldots, N_m$.
\end{itemize}
In particular,
the matrix $\cA_{q_0,m}^{\tiny(even)}$ has the following structure 
\begin{equation}\label{matrixEven}
\cA_{q_0,m}^{\tiny (even)} = 
\left(
\begin{array}{c|c|c}
* & {\mathcal L'} & {\mathcal O'}  \\
\hline
* & * & {\mathcal K'} 
\end{array}
\right),
\end{equation}
where
\begin{itemize}
\item ${\mathcal L'}$ a lower triangular $2k_0\times2k_0$ matrix with $1$'s on the diagonal;
\item ${\mathcal K'}$ is a $(N_m-2k_0)\times (N_m-3k_0+m-1)$ matrix of the form
$$
{\mathcal K'}= 
\left(
\begin{array}{ccccc}
1 & 0 & 0 & \ldots & 0 \\
1 & 0 & 0 & \ldots & 0 \\
* &1 & 0 & \ldots & 0 \\
* &1 & 0 & \ldots & 0 \\
* &* & 1 & \ldots & 0\\
\vdots & \vdots &  \vdots & \vdots &\vdots  \\
%&&& \ldots\medskip
%
* &* & * & \ldots &  1\\
* &* & * & \ldots &  1
\end{array}
\right);$$
\noindent actually, the above matrix is just an example of ${\mathcal K}'$, for some choice of $k_0$ and $m$: 
 the actual form and size of this block, in fact, may vary according to the arithmetic properties  of $k_0$ and $m$;
\item ${\mathcal O'}$ is a block of zeros of size $2k_0 \times (N_m-3k_0+m-1)$;
\item observe that each row has a unit on it and the $*$ entries are a multiple of $e^{2j}$, where $j \in \N$ represents the  ``distance'' from the unit within the row;
in particular all $*$ entries are of the form ${\xi}\,{ \cos^{-2j} ( w \pi )  e^{2j}}$, where $\xi \in \Q$,  $w\in \{\frac{1}{2k}, \frac{3}{2k}: \; k>j \}$.
\item 
{Notice that this hierarchical structure of the powers of $e$ within a given row/column,  implies a similar hierarchical structure for the rows of its inverse, as we have already pointed out
before, for example in  \eqref{hierstruc1},  \eqref{hierstruc2}, \eqref{hierstruc3}, \eqref{hierstruc4}, \eqref{hierstruc5} and \eqref{hierstruc6}.}
\end{itemize}
}
\end{remark}}

If $\cA_{q_0,m}^{\tiny (even)}$ is  {\it non-degenerate}, then  solving the linear system, we get that
{$$a_{2k}=O(e^{2k-2m+2}\|\mu\|_{C^{5k_0}}),\quad k=m,\dots, k_0.$$}
Then one can show that replacing $m$ by $m-1$,  assumption \eqref{assumption-ii} continues to hold.
Therefore, iterating the procedure until {$m=1$,} we conclude that
$$a_{2k}=O(e^2\|\mu\|_{C^{5k_0}}),\quad k=2,\dots, k_0.$$

\smallskip

Similarly, one can show that 
$$
b_{2k}=O(e^2\|\mu\|_{C^{5k_0}}),\quad k=2, \dots, k_0.
$$

\bigskip
{To summarize,  the discussion in Subsections \ref{subsecodd} and \ref{subseceven} leads to the following statement.\medskip}

{
\begin{proposition}\label{basic-estimate} If all the $q_0-2$ matrices in \eqref{matrixAodd} and \eqref{matrixEven} are 
non-degenerate, then there exists $C_{q_0}>0$ depending only on $q_0$  such that 
$$|a_k|, |b_k|\leq C_{q_0}e^2\|\mu\|_{C^{{3q_0}}}, \quad k=3,\dots,q_0.$$
\end{proposition}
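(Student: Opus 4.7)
The plan is to follow the two-part iterative scheme laid out in Subsections \ref{subsecodd} and \ref{subseceven}, and to package the estimates obtained there into the uniform bound asserted in the statement. The input is twofold: the necessary condition \eqref{fourier1}, which relates the Fourier coefficients of $\mu$ along a rational caustic of rotation number $p/q$, and the base estimates from Lemma \ref{Fourier5}, which provide coarse bounds on $a_{2k+1}$ and $a_{2k}$ for $k$ large enough. Together, these allow a finite backward induction on a parameter $m$ running from $k_0$ down to $1$.

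First, I would treat the odd-indexed coefficients. I would fix the inductive statement \eqref{assumption-i}: namely, that for a given $1\leq m\leq k_0$, every $a_{2k+1}$ with $k$ in the relevant range obeys the stated bound with exponent $2(k-m)+2$. Lemma \ref{Fourier5} initializes the induction at $m=k_0$. For the inductive step, one uses the existence of integrable rational caustics with rotation numbers $\frac{1}{2k+1}$ for $k=k_0,\dots,3k_0-m$ and, additionally, $\frac{2}{2k+1}$ for $k=2k_0,\dots,3k_0-m$ (available because $\Omega$ is $q_0$-rationally integrable and $\frac{2}{2k+1}<\frac{1}{q_0}$ for such $k$). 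Applying \eqref{fourier1} at a suitable truncation order $N=N(k)$, and invoking Lemma \ref{rot-no1} together with the inductive hypothesis to absorb all cross terms $a_{2(k-l)+1}$ with $l\neq n$ into the error, one is left with the triangular expression \eqref{eqodd1}--\eqref{eqodd3}. These form a square linear system in the $3k_0-2m+2$ unknowns $a_{2m-1},\dots,a_{2(3k_0-m)+1}$, with coefficient matrix exactly $\cA_{q_0,m}^{(odd)}$ as in \eqref{matrixAodd}. Non-degeneracy of $\cA_{q_0,m}^{(odd)}$, together with the hierarchical structure of its entries (powers of $e^{2j}$ keyed to distance from the unit on each row), gives via Theorem \ref{thm:inv-matrix} the row estimate $\bigl(O(e^{-2}),O(e^{-4}),\dots\bigr)$ for its inverse, which converts the right-hand side error $O(e^{2N(k)+2}\|\mu\|_{C^{3k_0}})$ into the improved bound for $a_{2m-1}$. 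This closes the induction with $m$ replaced by $m-1$. Iterating until $m=1$ yields $a_{2k+1}=O(e^{2}\|\mu\|_{C^{3k_0}})$ for $k=1,\dots,k_0-1$.

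The even-order case runs in parallel, with the inductive statement \eqref{assumption-ii}, base step again from Lemma \ref{Fourier5} at $m=k_0$, and the inductive step using caustics of rotation number $\frac{1}{2k}$ (for $k=k_0+1,\dots,N_m$) plus $\frac{3}{2k}$ when $2k\not\equiv 0 \pmod 3$ (for $k=3k_0+1,\dots,N_m$). The integer $N_m$ is chosen precisely by the book-keeping in Remark \ref{whyNm} so that the number of equations matches the number of unknowns $a_{2k}$, $k=m,\dots,N_m$. The resulting square linear system has coefficient matrix $\cA_{q_0,m}^{(even)}$ as in \eqref{matrixEven}; its assumed non-degeneracy together with the same hierarchical block structure permits inversion with the required row estimates, producing the improved bound on $a_{2m}$ and allowing the induction to descend from $m$ to $m-1$. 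At $m=1$ we obtain $a_{2k}=O(e^2\|\mu\|_{C^{5k_0}})$ for $k=2,\dots,k_0$; the same argument with $\sin$ in place of $\cos$ in the Fourier expansion gives identical bounds for the $b_k$. Replacing $5k_0$ by $3q_0=6k_0$ in the exponent of the Sobolev norm gives a single uniform bound covering both the odd and even cases, and absorbing the combinatorial constants (which depend only on $q_0$) into $C_{q_0}$ yields the stated conclusion.

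The real obstacle, and indeed the place where the hypothesis of the proposition enters, is step (3) of each inductive cycle: one must know that the coefficient matrices $\cA_{q_0,m}^{(odd)}$ and $\cA_{q_0,m}^{(even)}$ are invertible and, more delicately, that their inverses preserve the hierarchy of powers of $e$ well enough for the error terms on the right-hand side to translate into the desired $O(e^2)$ bound on $a_{2m\pm 1}$ and $a_{2m}$. The invertibility is exactly the standing hypothesis; the quantitative control on the inverse follows from the lower-triangular-with-$1$'s structure of the blocks $\mathcal L,\mathcal L'$ and the block of repeated rows $\mathcal K,\mathcal K'$, via Theorem \ref{thm:inv-matrix}. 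Everything else is a careful accounting of powers of $e$, of the Sobolev norm exponent needed at each truncation order $N$, and of the combinatorial range of $k$ at each inductive step, as already carried out in the preceding two subsections.
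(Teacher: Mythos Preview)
Your proposal is correct and follows essentially the same approach as the paper: the proposition is explicitly presented there as a summary of the two backward inductions carried out in Subsections \ref{subsecodd} and \ref{subseceven}, and you have faithfully reproduced that scheme, including the base case via Lemma \ref{Fourier5}, the choice of caustics $\tfrac{1}{2k+1},\tfrac{2}{2k+1}$ (odd) and $\tfrac{1}{2k},\tfrac{3}{2k}$ (even), the reduction via \eqref{fourier1} to the square systems with matrices \eqref{matrixAodd}--\eqref{matrixEven}, and the use of Theorem \ref{thm:inv-matrix} to extract the hierarchical row estimates on the inverse. Your final bookkeeping, passing from $\|\mu\|_{C^{3k_0}}$ and $\|\mu\|_{C^{5k_0}}$ to the common $\|\mu\|_{C^{3q_0}}=\|\mu\|_{C^{6k_0}}$, is exactly how the uniform norm in the statement arises.
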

}

\bigskip

\begin{remark}
{Notice that the  algorithm that we have described,  can be easily implemented on a computer, 
hence  all of the above non-degeneracy conditions can be explicitely verified, via symbolic computations, 
for arbitrary $q_0$; see Sections \ref{case3}--\ref{case5} for the  cases corresponding to 
$q_0=3,4,5$.\medskip}
\end{remark}

\section{Deformed Fourier modes}\label{deformed-fourier}
Let $\Omega$ be a strictly convex domain and let $s$ denote the arc-length 
parametrization of $\partial\Omega$ {and denote by $|\partial \Omega|$ its length}. 
Let $\rho(s)$ be its radius of curvature at $s$. Observe that if $\Omega$ is $C^r$, 
then $\rho$ is $C^{r-2}$. The Lazutkin parametrization of the boundary, first introduced 
in \cite{Lazutkin}, is defined as 
$$
x(s)=C_{\Omega}\int_0^s\rho(\sigma)^{-2/3}d\sigma,\quad\text{where}
\quad C_{\Omega}:=2\pi\Big[\int_{0}^{|{\partial\Omega}|}\rho(\sigma)^{-2/3}d\sigma\Big]^{-1}.
$$
Observe that if $\partial\Omega=\cE_e$ is an ellipse, $\rho$ is analytic, thus, 
the Lazutkin parametrization is itself an analytic parametrization of $\cE_e$. 
Let $(\mu,\varphi)$ be the elliptic coordinates associated to the ellipse $\cE_e$, 
$$\cE_e=\{(\mu_0,\varphi): \; \varphi\in[0,2\pi)\}.$$
Let $\varphi_L(x)$ denote the change of parametrization from $x$ to $\varphi$. 
Then  we have the following lemma. 
\begin{lemma}\label{lazutkin-change} For each $r\in\mathbb{N}$, 
there exists $C_r$ such that 
$$\|\varphi_L(x)-x\|_{C^r}\leq C_re^2.$$
\end{lemma}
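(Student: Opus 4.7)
The plan is to derive an explicit formula for the Lazutkin parameter as a function of the elliptic angle $\varphi$ on $\partial\cE_e$, to observe that this formula depends analytically on $e^2$, and then to compare with the base case $e=0$ where the two parameters coincide.

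Fix the normalization $c=e$ in force throughout the section, so that $\partial\cE_e=\{(\cos\varphi,\sqrt{1-e^2}\sin\varphi):\varphi\in\mathbb T\}$. A direct computation gives the speed $\frac{ds}{d\varphi}=\sqrt{1-e^2\cos^2\varphi}$ and the radius of curvature $\rho(\varphi)=(1-e^2\cos^2\varphi)^{3/2}/\sqrt{1-e^2}$; consequently
\[
\rho(\varphi)^{-2/3}\,\frac{ds}{d\varphi}\;=\;\frac{(1-e^2)^{1/3}}{\sqrt{1-e^2\cos^2\varphi}}.
\]
Changing variable $\sigma\mapsto\varphi$ in the definition of $x(s)$, the factor $(1-e^2)^{1/3}$ cancels between numerator and normalizing constant, leaving
\[
x(\varphi;e)\;=\;\frac{2\pi\,\Phi(\varphi;e)}{\Phi(2\pi;e)},\qquad \Phi(\varphi;e):=\int_0^\varphi\!\frac{d\tau}{\sqrt{1-e^2\cos^2\tau}}.
\]
In particular $x(\varphi;0)\equiv\varphi$.

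Next, the integrand is jointly real-analytic on $\mathbb T\times(-1,1)$ and depends only on $e^2$, hence the same is true of $\Phi$ and of $x(\varphi;e)$. Since $\partial_\varphi x(\varphi;0)\equiv 1$, a uniform inverse function theorem on a small neighborhood $\{|e|\le e_0\}$ shows that $\varphi\mapsto x(\varphi;e)$ is a smooth diffeomorphism of $\mathbb T$ whose inverse $\varphi_L(\cdot;e)$ is smooth in $x$ and analytic in $e^2$, with $\varphi_L(x;0)\equiv x$.

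Finally, one writes $\varphi_L(x;e)-x=e^2\,G(x;e^2)$; the residual $G$ extends smoothly across $e=0$ by the previous step, and hence its $C^r(\mathbb T)$-norm is uniformly bounded for $|e|\le e_0$, which yields the constant $C_r$. The only mildly delicate point is the uniform applicability of the inverse function theorem, which is handled by the lower bound $\partial_\varphi x(\varphi;e)\ge 1/2$ on all of $\mathbb T$, valid for $e_0$ small enough by continuity of $\partial_\varphi x$ in $e$ and the fact that $\partial_\varphi x(\varphi;0)\equiv 1$; no other step presents any obstruction.
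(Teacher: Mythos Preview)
Your proof is correct and is precisely the kind of direct computation the paper has in mind when it calls the lemma ``straightforward'' and defers to \cite[Appendix A]{KS}: you write the Lazutkin parameter explicitly as $x(\varphi;e)=2\pi\,\Phi(\varphi;e)/\Phi(2\pi;e)$, observe its analytic dependence on $e^2$ with $x(\varphi;0)=\varphi$, invert, and read off the $O(e^2)$ bound in every $C^r$ norm. The only minor remark is that the restriction to small $e_0$ for invertibility is unnecessary, since $\partial_\varphi x(\varphi;e)=\tfrac{2\pi}{\Phi(2\pi;e)}(1-e^2\cos^2\varphi)^{-1/2}>0$ for all $|e|<1$; otherwise the argument is complete.
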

{The proof of this lemma is straightforward. The reader is kindly referred to  \cite[Appendix A]{KS} for some details.}

Now let us introduce the change of variables from the action-angle parametrization $\theta$ of 
{$ \cE_e$}, derived  from the smooth convex caustic with rotation number $1/q$,  
to the Lazutkin parametrization $x$, {\it i.e}, 
$$x=X_q(\theta):=\varphi_L^{-1}\Big(\varphi_{\lambda_{1/q}}(\theta)\Big).$$
The following lemma is proven in \cite[Lemma 11]{ADK}.
\begin{lemma}\label{lemma-q-lazutkin}There exists $C(e)$, with $C(e)\to0$ as $e\to 0^+$, such that
$$\|X_q(\cdot)-\mathbb{Id}(\cdot)\|_{C^1}\leq \frac{C(e)}{q^2}.$$
where $\mathbb{Id}$ stands for the identity map.
\end{lemma}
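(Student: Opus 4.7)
My plan is to decompose $X_q-\mathbb{Id}$ into two pieces, estimate each via the explicit formulae for the action-angle parametrization \eqref{action-para} and for the Lazutkin change of coordinates provided by Lemma \ref{lazutkin-change}, and then extract the $1/q^2$ decay from a cancellation built into Lazutkin's construction. Concretely, I write
$$X_q(\theta)-\theta \;=\; \bigl[\varphi_L^{-1}\bigl(\varphi_{\lambda_{1/q}}(\theta)\bigr) - \varphi_{\lambda_{1/q}}(\theta)\bigr] \;+\; \bigl[\varphi_{\lambda_{1/q}}(\theta)-\theta\bigr]$$
and handle the two brackets separately.

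For the first bracket, Lemma \ref{lazutkin-change} gives $\varphi_L^{-1}(y) = y + e^2\,R(y,e)$ with $R$ smooth and $2\pi$-periodic, so the bracket is automatically $O(e^2)$ in $C^r$, and this bound persists under composition with $\varphi_{\lambda_{1/q}}$. For the second bracket, I use the classical Fourier expansion of the Jacobi amplitude,
$$\am(u;k) = \frac{\pi u}{2K(k)} + 2\sum_{n\geq 1}\frac{p^{\,n}}{n(1+p^{\,2n})}\sin\!\Bigl(\tfrac{n\pi u}{K(k)}\Bigr), \qquad p=e^{-\pi K'(k)/K(k)},$$
which, specialized to $u=2K(k_{\lambda_{1/q}})\theta/\pi$, yields
$$\varphi_{\lambda_{1/q}}(\theta)-\theta = 2\sum_{n\geq 1}\frac{p_q^{\,n}}{n(1+p_q^{\,2n})}\sin(2n\theta).$$
Lemma \ref{rot-no1} supplies $\lambda_{1/q} = b\sin(\pi/q) + O(e^2)$, hence $k_{\lambda_{1/q}}^2 = e^2/\cos^2(\pi/q) + O(e^4)$, and the nome $p_q$ is controlled accordingly. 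Differentiating this series term by term, together with the smoothness of $\lambda_{1/q}$ in $e^2$, provides the $C^1$ control of the second bracket.

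The main obstacle is to upgrade the resulting bound to the announced $C(e)/q^2$ decay: a naive combination of the ingredients above only delivers $O(e^2)$, uniformly in $q$. This refinement is the heart of Lazutkin's original construction: the exponent $2/3$ in $\rho^{-2/3}$ is chosen precisely so that, in Lazutkin coordinates $(x,y)$ near the boundary, the billiard map assumes the normal form $(x,y)\mapsto(x+y+O(y^3),\, y+O(y^4))$. As a consequence, for a convex caustic of rotation number $\omega=1/q$ the induced boundary parametrization agrees with the Lazutkin parametrization modulo $O(\omega^2)=O(1/q^2)$, with an $e$-dependent proportionality constant that vanishes as $e\to 0^+$. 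Making this cancellation quantitative---by matching the $e$-Taylor expansion of $\varphi_L$ against the joint $(e,1/q)$-expansion of $\varphi_{\lambda_{1/q}}$ and using that both parametrizations coincide at the four half-period points $\theta\in\{0,\pi/2,\pi,3\pi/2\}$---is the technical core, carried out in detail in \cite[Lemma~11]{ADK}, whose computation we follow.
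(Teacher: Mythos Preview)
The paper gives no proof of this lemma at all: it simply states that the result is \cite[Lemma~11]{ADK}. Your proposal ultimately lands in the same place, so in that sense it matches the paper.

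That said, the decomposition you begin with is misleading rather than helpful. Neither bracket by itself carries any $1/q^2$ decay: as $q\to\infty$ one has $\lambda_{1/q}\to 0$, hence $k_{\lambda_{1/q}}\to e$, so the nome $p_q$ tends to a \emph{nonzero} limit $p(e)$ and the second bracket $\varphi_{\lambda_{1/q}}(\theta)-\theta$ converges to a fixed nonzero function of size $O(e^2)$; the first bracket likewise converges to $\varphi_L^{-1}(\theta)-\theta$, again nonzero of size $O(e^2)$. The $1/q^2$ therefore comes entirely from the cancellation \emph{between} the two brackets, which your additive split obscures. You recognize this yourself in the last paragraph and then defer to \cite[Lemma~11]{ADK} for the actual mechanism; but at that point the earlier Fourier-series manipulations contribute nothing to the argument. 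A cleaner write-up would skip the decomposition and go straight to the point you make at the end: Lazutkin's exponent $2/3$ forces the action-angle parametrization of the caustic $C_{\lambda_{1/q}}$ to agree with the Lazutkin parametrization to order $\omega^2=1/q^2$, with a constant that vanishes as $e\to 0^+$ because the whole discrepancy is $O(e^2)$.
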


Let us denote $L^2(\mathbb{T})$ the $L^2$-space of $2\pi$-periodic functions, with
 trigonometric basis $\{v_k\}_{k\in\mathbb{Z}}$, where
$$v_0=1, \; v_k(x)=\frac{1}{\sqrt{\pi}}\cos k x,\quad v_{-k}=\frac{1}{\sqrt{\pi}}\sin kx ,\quad k=1,2,\dots.$$
Consider another set of functions $\{c_k\}_{k\in\mathbb{Z}}$, where
$$c_0(x)=v_0,\quad c_{k}(x)=v_{k}(x),\quad c_{-k}(x)=v_{-k}(x)\quad k=1,\dots, q_0,$$
and for $k> q_0$,
$$
c_k(x)=\frac{\cos \left(k X^{-1}_k(x)\right)}{\sqrt{\pi}\,X'_k\big(X_k^{-1}(x)\big)}\quad ,\quad 
c_{-k}(x)=\frac{\sin \left(k X^{-1}_k(x)\right)}{\sqrt{\pi}\,X'_k\big(X_k^{-1}(x)\big)}.
$$
Note here that the functions $c_{\pm k}$ have zero average.
From  Lemma \ref{lemma-q-lazutkin}, for each $k\ge 1$ we have 
\begin{equation}\label{basis-difference}\|c_k-v_k\|_{C^0}\leq \frac{C(e)}{k}
\quad , \quad 
\|c_{-k}-v_{-k}\|_{C^0}\leq \frac{C(e)}{k},
\end{equation}
and 
$$
C(e)\longrightarrow 0\quad \text{as}\quad e\to0^+;
$$
for $e$ small enough,
$\{c_k\}_{k\in\mathbb{Z}}$ form a basis of $L^2$ (see \cite[Proposition 22]{ADK} and
 Lemma \ref{basis-proof} hereafter).
 
For any integer $r\geq 1$,  we consider the Sobolev space $H^r(\mathbb{T})$, 
which is defined as
$$
H^r(\mathbb{T}):=\{ u\in L^2(\mathbb{T}): \; u^{(r)}\in L^2(\mathbb{T})\},
$$
{where $u^{(r)}$ denotes the $r$-th (weak) derivative of $u$}.
Recall that $H^r(\mathbb{T})$ is a Hilbert space with  inner product
 $$
 \langle u, v\rangle_{r}=
 \Big(\int_{\mathbb{T}}u dx\Big)\Big(\int_{\mathbb{T}}vdx\Big)+
 \int_{\mathbb{T}} u^{(r)}v^{(r)}dx,
 $$
 and we have  
 $$
 \|u\|_{r}^2=\sum_{k\in\mathbb{Z}}(|k|^{2r}\wedge1)\hat{u}_k^2=\langle u,u\rangle_r,
 $$
 where $a\wedge b=\max\{a,b\}$ and  $\hat{u}_k$ are the Fourier coefficient of $u$, {\it i.e.},
 $$\hat{u}_k=\int_{\mathbb{T}}u(x)v_k(x)dx,\quad k\in\mathbb{Z}.$$
{Notice that the choice of norms is somewhat 
non-standard and for each $r\ge 1$ we have 
$$
\|u\|_{r} \leq \|u\|_{r+1}.
$$
}
 Denote $\mathcal{V}_k(x)$ be the functions  that have zero average
 and
 $$
 \mathcal{V}^{(r)}_k(x)=v_k(x), \quad k\in\mathbb{Z}\setminus\{0\}.
 $$
 Then,  we have that the set of functions $\{\mathcal{V}_0=1, \mathcal{V}_k,\; 
 k\in\mathbb{Z}\setminus\{0\}\}$ form an orthonormal basis of $H^r(\mathbb{T})$, {\it i.e.},
 $$\langle \mathcal{V}_k,\mathcal{V}_j\rangle_r=\delta_{k,j}, \quad \forall\; k,\;j\in\mathbb{Z},$$
 and for every $u\in H^{r}(\mathbb{T})$, we have
 $$u(x)=\sum_{k\in\mathbb{Z}} u_k \mathcal{V}_k(x),$$
 and 
 $$\|u\|_r^2=\sum_{k\in\mathbb{Z}}u_k^2,$$
 where $u_k=\langle u,\mathcal{V}_k\rangle_r.$ 
 Observe that $u_k^2=(k^{2r}\wedge1)\hat{u}_k^2$, for $k\in\mathbb{Z}$.
 
 Now we introduce a set of functions
 $$
 \{\cC_0=1, \cC_k,\; \cC_{-k}, k\in\mathbb{Z}_+\},
 $$ 
 where $\cC_{\pm k}$, $k\in\mathbb{Z}_+$ are the zero average functions  on $\mathbb{T}$ such that
 $$
 \cC^{(r)}_k(x)=c_k(x),\quad \cC^{(r)}_{-k}(x)=c_{-k}(x),\quad k\in\mathbb{Z}\setminus\{0\}.
 $$
 Therefore, we have
 $$
 \cC_k=\mathcal{V}_k\quad ,\quad \cC_{-k}=\mathcal{V}_{-k},\quad k\in\mathbb{Z}_+, \ k\leq q_0,
 $$
 and 
 using \eqref{basis-difference}{
\begin{eqnarray}\label{basis1}
 \|\mathcal{V}_k-\mathcal{C}_k\|_r^2 &=&
 \langle \mathcal{V}_k-\cC_k, \mathcal{V}_k-\mathcal{C}_k\rangle_r
 \leq \frac{[C(e)]^2}{k^2}, \nonumber
\\
 \|\mathcal{V}_{-k}-\mathcal{C}_{-k}\|_r^2 &=&
 \langle \mathcal{V}_{-k}-\cC_{-k}, \mathcal{V}_{-k}-\mathcal{C}_{-k}\rangle_r
 \leq \frac{[C(e)]^2}{k^2} 
 \end{eqnarray}
 for $k\in\mathbb{Z}$, $k>q_0$}.
 Consider the linear operator 
 $$
 \cL: H^r(\mathbb{T})\to H^r(\mathbb{T}),\; \ \ 
 u\mapsto \cL u=u_0+\sum_{k\in\mathbb{Z}_+}u_k\cC_k(x)+u_{-k}\cC_{-k}(x),
 $$
 where $u_k=\langle u,\mathcal{V}_k\rangle_r$.
 
Define $D(q_0):=
\Big[\sum_{|k|>q_0,k\in\mathbb{Z}}
\frac{1}{k^2}\Big]^{\frac{1}{2}}<\sqrt{\frac{\pi^2}{3}}.$
 \begin{lemma}\label{basis-proof} %\textr{Fix notations! }
 {Let $C(e)$ be from Lemma \ref{lemma-q-lazutkin}.} Assume   $e_0$  satisfies
 $$
 C(e_0)D(q_0)<1.
 $$ 
 Then, for each  $e\in[0,e_0]$ 
 the operator $\cL$ is bounded and invertible in the Hilbert space 
 $H^r(\mathbb{T})$. In particular, $\{\cC_0,\;\cC_k,\;\cC_{-k}, k\in\mathbb{Z}_+\}$ 
 form a basis of $H^r(\mathbb{T})$.
 \end{lemma}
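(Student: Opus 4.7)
The plan is to regard $\mathcal{L}$ as a perturbation of the identity on $H^r(\mathbb{T})$ and apply a standard Neumann series argument. Since $\mathcal{C}_0=\mathcal{V}_0$ and $\mathcal{C}_{\pm k}=\mathcal{V}_{\pm k}$ for $1\le k\le q_0$, the difference $\mathcal{L}-I$ only acts on the high-frequency components:
\begin{equation*}
(\mathcal{L}-I)u \;=\; \sum_{k>q_0} u_k(\mathcal{C}_k-\mathcal{V}_k) + u_{-k}(\mathcal{C}_{-k}-\mathcal{V}_{-k}),
\end{equation*}
where $u_{\pm k}=\langle u,\mathcal{V}_{\pm k}\rangle_r$ are the coefficients of $u$ in the orthonormal basis $\{\mathcal{V}_k\}_{k\in\mathbb{Z}}$.

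The first step is to estimate $\|(\mathcal{L}-I)u\|_r$ using the triangle inequality together with \eqref{basis1}. By Cauchy--Schwarz,
\begin{equation*}
\|(\mathcal{L}-I)u\|_r \le \sum_{|k|>q_0}|u_k|\,\|\mathcal{C}_k-\mathcal{V}_k\|_r \le \Bigl(\sum_{|k|>q_0}|u_k|^2\Bigr)^{1/2}\Bigl(\sum_{|k|>q_0}\|\mathcal{C}_k-\mathcal{V}_k\|_r^2\Bigr)^{1/2},
\end{equation*}
and the bounds \eqref{basis1} give $\sum_{|k|>q_0}\|\mathcal{C}_k-\mathcal{V}_k\|_r^2 \le [C(e)]^2\,\sum_{|k|>q_0} k^{-2} = [C(e)\,D(q_0)]^2$. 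Since $\sum_{|k|>q_0}|u_k|^2\le \|u\|_r^2$, this yields the operator-norm bound
\begin{equation*}
\|\mathcal{L}-I\|_{H^r\to H^r} \;\le\; C(e)\,D(q_0).
\end{equation*}

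The second step is to invoke the hypothesis $C(e_0)D(q_0)<1$: for every $e\in[0,e_0]$ one has $\|\mathcal{L}-I\|<1$, so $\mathcal{L}=I+(\mathcal{L}-I)$ is invertible on $H^r(\mathbb{T})$ by the Neumann series, with $\|\mathcal{L}^{-1}\|\le (1-C(e)D(q_0))^{-1}$ and $\|\mathcal{L}\|\le 1+C(e)D(q_0)$. Thus $\mathcal{L}$ is a bounded linear isomorphism of $H^r(\mathbb{T})$.

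Finally, since $\mathcal{L}\mathcal{V}_k=\mathcal{C}_k$ for every $k\in\mathbb{Z}$ and $\{\mathcal{V}_k\}_{k\in\mathbb{Z}}$ is a (Schauder) basis of $H^r(\mathbb{T})$, the image $\{\mathcal{C}_k\}_{k\in\mathbb{Z}}$ under the isomorphism $\mathcal{L}$ is also a basis of $H^r(\mathbb{T})$. There is no substantive obstacle here: the only point requiring care is that the estimate $\|\mathcal{C}_{\pm k}-\mathcal{V}_{\pm k}\|_r\le C(e)/|k|$ from \eqref{basis1} is genuinely in the $H^r$ norm (not merely $C^0$), which is exactly what makes the Hilbert-space Cauchy--Schwarz step go through cleanly.
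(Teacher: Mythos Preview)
Your proof is correct and follows essentially the same approach as the paper: write $\mathcal{L}-I$ as a sum over $|k|>q_0$, apply Cauchy--Schwarz together with \eqref{basis1} to bound the operator norm by $C(e)D(q_0)<1$, and invoke the Neumann series. Your presentation is in fact slightly more complete, since you explicitly spell out the final step that $\{\mathcal{C}_k\}$ is the image of the orthonormal basis $\{\mathcal{V}_k\}$ under the isomorphism $\mathcal{L}$ and hence a basis.
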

 
 \begin{proof} 
 Observe that if $\|\cL-\mathbb{Id}\|_{H^r\to H^r}<1$, then $\cL$ is a bounded 
 invertible operator with a bounded  inverse; {recall that
\begin{equation}\label{defnormHrtoHr}
\|\cL-\mathbb{Id}\|_{H^r\to H^r}:=\sup_{\|u\|_r\leq 1}\|[\cL-\mathbb{Id}](u)\|_r.
\end{equation}
 }
 For each $v\in H^r$, we have
 $$u=\sum_{k\in\mathbb{Z}}u_k \mathcal{V}_k,\quad u_k=
 \langle u_k, \mathcal{V}_k\rangle_r,\quad k\in\mathbb{Z}.$$
 By the definition of the operator $\cL$, we have 
 $$[\cL-\mathbb{Id}](u)=\sum_{k\in\mathbb{Z}, |k|>q_0}u_k(\cC_k-\mathcal{V}_k).$$
 By the Cauchy inequality, we have
 \[\|[\cL-\mathbb{Id}](u)\|_r\leq 
 \sum_{k\in\mathbb{Z}, |k|>q_0}|u_k|\,\cdot\,\|\cC_k-\mathcal{V}_k\|_r\leq \left[\sum_{k\in\mathbb{Z}}u_k^2\right]^{\frac{1}{2}}\left[\sum_{k\in\mathbb{Z}}\|\cC_k-\mathcal{V}_k\|_r^2\right]^{\frac{1}{2}}.\]
 By \eqref{basis1}, we have
 \[\left[\sum_{k\in\mathbb{Z}}\|\cC_k-\mathcal{V}_k\|_r^2\right]^{\frac{1}{2}}\leq C(e)\left[\sum_{|k|>q_0, k\in\mathbb{Z}}\frac{1}{k^2}\right]^{\frac{1}{2}}<C(e)\sqrt{\frac{\pi^2}{3}}.\]
 Therefore, the assertion of the lemma follows from {\eqref{defnormHrtoHr}}
 and the fact that $\|u\|^2_r=\sum_{k\in\mathbb{Z}}u_k^2.$
 \end{proof}
 
 \medskip
 
 \begin{remark} {Observe that the  basis $\{\cC_0,\;\cC_k,\;\cC_{-k}, k\in\mathbb{Z}_+\}$ 
  of $H^r(\mathbb{T})$ is not necessarily an orthogonal basis. }
 \end{remark}

 \medskip
 
 \begin{corollary}\label{key-corollary1}
 There exists $C'(e)>0$, with $C'(e)\to1$ as $e\to0^+$, such that for each $u\in H^r(\mathbb{T})$,
$$\|u\|_r^2\leq C'(e)\sum_{k\in\mathbb{Z}}\tilde{u}_k^2,$$
where $\tilde{u}_k=\langle u,\cC_k\rangle_r$.
 \end{corollary}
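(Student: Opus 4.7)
The plan is to reinterpret the sum $\sum_k \tilde{u}_k^2$ as the squared $H^r$-norm of the image of $u$ under the adjoint of the operator $\cL$ introduced just before Lemma~\ref{basis-proof}. Concretely, since $\{\mathcal{V}_k\}_{k\in\Z}$ is an orthonormal basis of $H^r(\mathbb{T})$, the operator $\cL$ is characterised by $\cL\mathcal{V}_k=\cC_k$ for every $k\in\Z$. Let $\cL^*$ be its Hilbert-space adjoint on $H^r(\mathbb{T})$. Then for every $k\in\Z$, the $k$-th coefficient of $\cL^*u$ in the orthonormal basis $\{\mathcal{V}_k\}$ is
\[
\langle \cL^* u,\mathcal{V}_k\rangle_r \,=\, \langle u,\cL\mathcal{V}_k\rangle_r \,=\, \langle u,\cC_k\rangle_r \,=\, \tilde u_k,
\]
and Parseval's identity immediately gives
\[
\|\cL^* u\|_r^{\,2} \,=\, \sum_{k\in\Z}\tilde u_k^{\,2}.
\]

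With this identity in hand, the desired inequality reduces to a quantitative bound on $\|(\cL^*)^{-1}\|_{H^r\to H^r}$. Here I would appeal directly to Lemma~\ref{basis-proof}: its proof shows that
\[
\|\cL-\mathbb{Id}\|_{H^r\to H^r} \,\le\, C(e)\,D(q_0),
\]
and since $C(e_0)D(q_0)<1$, the Neumann series yields the invertibility of $\cL$ together with
\[
\|\cL^{-1}\|_{H^r\to H^r} \,\le\, \frac{1}{1-C(e)D(q_0)}.
\]
Taking adjoints preserves operator norms on a Hilbert space, so the same bound holds for $\|(\cL^*)^{-1}\|_{H^r\to H^r}$.

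Combining the two steps, for every $u\in H^r(\mathbb{T})$,
\[
\|u\|_r \,=\, \|(\cL^*)^{-1}\cL^* u\|_r \,\le\, \frac{1}{1-C(e)D(q_0)}\,\|\cL^* u\|_r,
\]
and therefore
\[
\|u\|_r^{\,2} \,\le\, \frac{1}{\bigl(1-C(e)D(q_0)\bigr)^{2}}\sum_{k\in\Z}\tilde u_k^{\,2}.
\]
Setting $C'(e):=\bigl(1-C(e)D(q_0)\bigr)^{-2}$ gives the statement. Since $C(e)\to 0$ as $e\to 0^+$ (by Lemma~\ref{lemma-q-lazutkin}) and $D(q_0)$ is an absolute constant, $C'(e)\to 1$ as $e\to 0^+$, as required. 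No serious obstacle is expected: the whole argument is an adjoint-and-Neumann-series packaging of Lemma~\ref{basis-proof}; the only point requiring care is checking that the identification $\langle\cL^*u,\mathcal{V}_k\rangle_r=\langle u,\cC_k\rangle_r$ is justified for all $k\in\Z$ including $k=0$, which follows from the extension $\cL\mathcal{V}_0=\cC_0=1$ built into the definition of $\cL$.
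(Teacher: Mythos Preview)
Your proposal is correct and follows essentially the same route as the paper: identify $\sum_k\tilde u_k^2$ with $\|\cL^*u\|_r^2$ via Parseval and $\cL\mathcal V_k=\cC_k$, then bound $\|u\|_r$ by $\|(\cL^*)^{-1}\|\,\|\cL^*u\|_r$. The only difference is cosmetic: the paper defines $C'(e):=\|(\cL^*)^{-1}\|_{H^r\to H^r}$ abstractly and appeals to $\|\cL-\mathbb{Id}\|\to 0$ for the limit, whereas you make the constant explicit via the Neumann-series bound $C'(e)=(1-C(e)D(q_0))^{-2}$.
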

 \begin{proof}The operator $\cL$ is bounded and invertible with a bounded inverse, 
 so it is its adjoint operator $\cL^*$. Let us denote 
 $$
 C'(e)=\|(\cL^*)^{-1}\|_{H^r\to H^r}.
 $$  
 Hence we have that for each $u\in H^r(\mathbb{T})$,
 \[\begin{split}\|u\|_{r}^2&=\|(\cL^*)^{-1}\cL^*u\|_r\leq C'(e)\|\cL^*u\|_r^2\\
 &\leq C'(e)\sum_{k\in\mathbb{Z}}\langle \cL^*u,\mathcal{V}_k\rangle_r^2=
 C'(e)\sum_{k\in\mathbb{Z}}\langle u,\cL\mathcal{ V}_k\rangle_r^2.
 \end{split}\]
 Since $\cL \mathcal{V}_k=\cC_k$, we have 
 $$\|u\|_{C^r}\leq C'(e)\sum_{k\in\mathbb{Z}}\tilde{u}_k^2.$$
 The assertion that $C'(e)\to1$ as $e\to0^+$ follows from the fact that 
 $\|\cL-\mathbb{Id}\|_{H^r\to H^r}\to0$ as $e\to0^+$.
 \end{proof}
 
 \medskip
 
 \begin{corollary}\label{high-basis}
 Let $u(x)\in H^{r+1}(\mathbb{T})$. Then, there exists $C''(e)>0$ such that
 $$\big|\langle u, \cC_k\rangle_r\big|\leq \frac{C''(e)\|u\|_{r+1}}{|k|} \quad \forall\; k\in\mathbb{Z}\setminus\{0\}.$$
 \end{corollary}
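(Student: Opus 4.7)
The plan is to reduce the estimate, via the definition of $\langle\cdot,\cdot\rangle_r$ and the structure of the deformed modes $\cC_{\pm k}$, to a single integration by parts in an appropriately chosen variable. First I would observe that for every $k\in\mathbb{Z}\setminus\{0\}$ the function $\cC_k$ has zero average, so the first piece $\bigl(\int u\bigr)\bigl(\int \cC_k\bigr)$ of the inner product vanishes, and hence
\[
\langle u,\cC_k\rangle_r \;=\; \int_{\mathbb{T}} u^{(r)}(x)\,\cC_k^{(r)}(x)\,dx \;=\; \int_{\mathbb{T}} u^{(r)}(x)\,c_k(x)\,dx,
\]
with the analogous identity for $\cC_{-k}$. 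Since $u\in H^{r+1}$, one has $u^{(r+1)}\in L^2(\mathbb{T})$ and $\|u^{(r+1)}\|_{L^2}\le \|u\|_{r+1}$.

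For $1\le |k|\le q_0$ the modes coincide with $\mathcal{V}_{\pm k}$, so $c_k=v_k = \cos(kx)/\sqrt\pi$ (resp.\ $\sin(kx)/\sqrt\pi$), and a direct integration by parts, together with the periodicity of $u^{(r)}$, gives the bound $|\langle u,\cC_k\rangle_r|\le \|u\|_{r+1}/|k|\sqrt\pi$, which is trivially of the required form. The substantive case is $|k|>q_0$. Here I would perform the change of variables $\theta := X_k^{-1}(x)$, so that $dx = X_k'(\theta)\,d\theta$; the Jacobian is designed precisely to cancel the $X_k'\bigl(X_k^{-1}(x)\bigr)$ in the denominator of $c_k$. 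This yields
\[
\int_{\mathbb{T}} u^{(r)}(x)\,c_k(x)\,dx \;=\; \frac{1}{\sqrt\pi}\int_{\mathbb{T}} u^{(r)}\bigl(X_k(\theta)\bigr)\cos(k\theta)\,d\theta,
\]
and the analogous identity with $\sin(k\theta)$ for $\cC_{-k}$.

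Next I would integrate by parts in $\theta$, writing $\cos(k\theta) = \frac{1}{k}\bigl(\sin(k\theta)\bigr)'$; the boundary term vanishes by periodicity, producing
\[
\langle u,\cC_k\rangle_r \;=\; -\frac{1}{k\sqrt\pi}\int_{\mathbb{T}} u^{(r+1)}\bigl(X_k(\theta)\bigr)\,X_k'(\theta)\,\sin(k\theta)\,d\theta.
\]
Cauchy--Schwarz now gives
\[
|\langle u,\cC_k\rangle_r| \;\le\; \frac{1}{|k|\sqrt\pi}\,\Bigl(\int_{\mathbb{T}} |u^{(r+1)}(X_k(\theta))|^2\,[X_k'(\theta)]^2\,d\theta\Bigr)^{1/2}\,\|\sin(k\cdot)\|_{L^2(\mathbb{T})}.
\]
Changing variables back to $x=X_k(\theta)$ in the first factor and using Lemma \ref{lemma-q-lazutkin}, which provides $\|X_k'-1\|_{C^0}\le C(e)/k^2$ and hence a uniform bound on $X_k'$ for $e$ small, the first factor is controlled by $\tilde C(e)\,\|u^{(r+1)}\|_{L^2}\le \tilde C(e)\,\|u\|_{r+1}$. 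Together with $\|\sin(k\cdot)\|_{L^2}=\sqrt\pi$ this yields the claimed inequality with $C''(e):=\max\{1,\tilde C(e)\}$. The same argument applies verbatim to $\cC_{-k}$ with $\cos\leftrightarrow\sin$.

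The only delicate point is arranging the change of variables so that the $1/k$ decay comes out cleanly: this is exactly the role played by the factor $1/X_k'(X_k^{-1}(x))$ in the definition of $c_{\pm k}$, which absorbs the Jacobian and leaves a pure trigonometric function on which integration by parts produces the desired $|k|^{-1}$. The uniform control on $X_k'$ supplied by Lemma \ref{lemma-q-lazutkin} is what prevents the constant from blowing up with $k$.
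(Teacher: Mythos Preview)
Your proof is correct, but it follows a different route from the paper's argument. The paper's proof is a two-line triangle-inequality argument: one writes
\[
\langle u,\cC_k\rangle_r \;=\; \langle u,\mathcal{V}_k\rangle_r \;+\; \langle u,\cC_k-\mathcal{V}_k\rangle_r,
\]
bounds the first term by $\|u\|_{r+1}/|k|$ (since $\langle u,\mathcal{V}_k\rangle_r$ is, up to normalization, the $k$-th Fourier coefficient of $u^{(r)}$ and $u\in H^{r+1}$), and bounds the second by Cauchy--Schwarz in $H^r$ together with the already-established estimate $\|\cC_k-\mathcal{V}_k\|_r\le C(e)/|k|$ from \eqref{basis1}. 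This gives $C''(e)=1+C(e)$ immediately.

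Your approach instead unpacks the inner product as $\int u^{(r)}c_k$, uses the change of variable $\theta=X_k^{-1}(x)$ to cancel the Jacobian factor built into $c_k$, and then integrates by parts against the resulting pure trigonometric function. This is more hands-on but equally valid; it has the virtue of explaining \emph{why} the factor $1/X_k'(X_k^{-1}(x))$ appears in the definition of $c_{\pm k}$ (it makes the pairing behave like an ordinary Fourier coefficient in the $\theta$-variable), whereas the paper's argument treats $\cC_k$ as a black-box perturbation of $\mathcal{V}_k$ and simply recycles \eqref{basis1}. The paper's route is shorter and avoids any regularity discussion of the chain rule for $u^{(r)}\circ X_k$ with $u\in H^{r+1}$; your route is self-contained and does not rely on having proved \eqref{basis1} beforehand.
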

 \begin{proof}
 Using \eqref{basis1}, we have
 $$\big|\langle u,\mathcal{V}_k-\cC_k\rangle_r\big|\leq \|u\|_{r}\|\mathcal{V}_k-\cC_k\|_r\leq \frac{C(e)\|u\|_{r}}{|k|}.$$
 Since $u\in H^{r+1}$, we have
 $$\big|\langle u, \mathcal{V}_k\rangle_r\big|\leq \frac{\|u\|_{r+1}}{|k|}.$$
 Therefore we have
 $$\big|\langle u, \cC_k\rangle_r\big|\leq \frac{C''(e)\|u\|_{r+1}}{|k|},$$
 where $C''(e)=1+C(e)$.
 \end{proof}

 \bigskip
 
 Consider a domain $\Omega$, whose boundary $\partial\Omega$ is close to the ellipse $\cE_e$, written in  elliptic coordinates associated to $\cE_e$ as
 $$\partial\Omega=\cE_e+\mu(\varphi),$$ 
 where $\|\mu\|_{C^m}\leq M$ with $m>r+2$ and $\|\mu\|_{C^1}$ is small enough. 
 Let $x$ denote the Lazutkin parametrization of  $\cE_e$. Define
 $$f_{\mu}(x)=\mu(\varphi_L(x)).$$
 
Then  we have:

 \begin{lemma}\label{change-lazutkin-1} For any integer $r>0$, there exists $C_r>0$ independent of  {$\f$} and $\mu$, such that
$$ (1-C_re^2)\|\mu\|_{C^r}\leq \|f_{\mu}\|_{C^r}\leq (1+C_re^2)\|\mu\|_{C^r}.$$
 \end{lemma}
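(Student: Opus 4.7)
The plan is to exploit the fact, provided by Lemma \ref{lazutkin-change}, that the change of parametrization $\varphi_L$ is $O(e^2)$-close to the identity in every $C^r$ norm, and to translate this closeness into a multiplicative comparison between $\|f_\mu\|_{C^r}$ and $\|\mu\|_{C^r}$ via Faà di Bruno's formula. Since $f_\mu=\mu\circ\varphi_L$, the two norms can only differ by factors generated by the derivatives of $\varphi_L$, all of which are close to those of the identity.

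More precisely, Lemma \ref{lazutkin-change} yields, uniformly in $x$, $\varphi_L'(x)=1+O(e^2)$ and $\varphi_L^{(j)}(x)=O(e^2)$ for every $2\le j\le r$; in particular, for $e$ sufficiently small, $\varphi_L$ is a $C^r$-diffeomorphism of $\mathbb{T}$. Applying Faà di Bruno's formula to $f_\mu=\mu\circ\varphi_L$, for each $1\le k\le r$ I would write
$$
f_\mu^{(k)}(x)=\mu^{(k)}(\varphi_L(x))\,\bigl(\varphi_L'(x)\bigr)^{k}+\sum_{\pi}\mu^{(|\pi|)}(\varphi_L(x))\prod_{B\in\pi}\varphi_L^{(|B|)}(x),
$$
where the sum runs over partitions $\pi$ of $\{1,\ldots,k\}$ containing at least one block of size $\ge 2$. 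Each summand in the correction term carries a factor of some $\varphi_L^{(j)}$ with $j\ge 2$, so is bounded by $C_r e^2\|\mu\|_{C^{k-1}}$; the leading term equals $\mu^{(k)}(\varphi_L(x))\bigl(1+O(e^2)\bigr)$. Taking the $C^0$ sup norm (and using that $\varphi_L$ is surjective onto $\mathbb{T}$), one gets
$$
\|f_\mu^{(k)}\|_{C^0}\le(1+C_re^2)\|\mu\|_{C^k}+C_re^2\|\mu\|_{C^{k-1}}\le(1+C_r'e^2)\|\mu\|_{C^r},
$$
which, maximizing over $0\le k\le r$, establishes the upper bound $\|f_\mu\|_{C^r}\le(1+C_re^2)\|\mu\|_{C^r}$.

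For the reverse inequality, I would first observe that, by a quantitative $C^r$ version of the inverse function theorem applied to $\varphi_L=\mathbb{Id}+O_{C^r}(e^2)$, the inverse $\varphi_L^{-1}$ also satisfies $\|\varphi_L^{-1}-\mathbb{Id}\|_{C^r}\le\widetilde C_r e^2$ for $e$ small enough; this can be verified concretely by differentiating the identity $\varphi_L^{-1}\circ\varphi_L=\mathbb{Id}$ up to order $r$ and bounding each term using Step~1. Applying the Faà di Bruno argument above to the composition $\mu=f_\mu\circ\varphi_L^{-1}$ then yields $\|\mu\|_{C^r}\le(1+\widetilde C_r e^2)\|f_\mu\|_{C^r}$, and rearranging (shrinking $e$ if necessary) produces $(1-C_re^2)\|\mu\|_{C^r}\le\|f_\mu\|_{C^r}$. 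The main obstacle is purely technical: one must carefully track the combinatorics in Faà di Bruno's formula to ensure that the correction terms genuinely absorb into $O(e^2)\|\mu\|_{C^r}$, and confirm that the same $C^r$-closeness to the identity transfers from $\varphi_L$ to $\varphi_L^{-1}$ with a constant uniform in $\mu$. Both points are standard but require care; once they are dispatched, the lemma follows.
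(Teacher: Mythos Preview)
Your proposal is correct and follows precisely the route the paper has in mind: the paper does not give a detailed proof but simply states that the lemma ``directly follows from Lemma~\ref{lazutkin-change}'', and your Fa\`a di Bruno argument, together with the transfer of $C^r$-closeness to the identity from $\varphi_L$ to $\varphi_L^{-1}$, is exactly the standard way to make that deduction rigorous.
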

 
{Moreover, the following holds.}
 
 \begin{lemma} \label{change-lazutkin-2}There exists $C>0$ such that
$$ |\hat{f}_k-\hat{\mu}_k|\leq Ce^2\|\mu\|_{C^1},$$
 where $\hat{f}_k$ and $\hat{\mu}_k$ are the Fourier coefficients of the functions $f_{\mu}$ and $\mu$.
 \end{lemma}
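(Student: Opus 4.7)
The plan is to reduce the difference $\hat{f}_k - \hat{\mu}_k$ to a single integral of $\mu \circ \varphi_L - \mu$ against a bounded trigonometric function, and then control it pointwise using Lemma \ref{lazutkin-change}. Since $\hat{\mu}_k = \int_{\mathbb{T}} \mu(\varphi) v_k(\varphi)\, d\varphi$ and $\hat{f}_k = \int_{\mathbb{T}} \mu(\varphi_L(x)) v_k(x)\, dx$, the variable of integration in $\hat{\mu}_k$ is just a dummy, so I would rewrite
\[
\hat{f}_k - \hat{\mu}_k \;=\; \int_{0}^{2\pi} \bigl[\mu(\varphi_L(x)) - \mu(x)\bigr] v_k(x)\, dx.
\]
This is the key simplification: no Jacobian factor appears, because both integrals are against the same $v_k$ kernel but in different dummy variables.

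Next, I would apply the mean value theorem to $\mu$ and invoke Lemma \ref{lazutkin-change} with $r=0$, which gives a constant $C_0$ (independent of $\mu$ and $e$) with $\|\varphi_L - \mathrm{Id}\|_{C^0} \leq C_0 e^2$. Thus
\[
\bigl|\mu(\varphi_L(x)) - \mu(x)\bigr| \;\leq\; \|\mu\|_{C^1}\,|\varphi_L(x) - x| \;\leq\; C_0\,e^2\,\|\mu\|_{C^1}.
\]
Combining this with the uniform bound $|v_k(x)| \leq 1/\sqrt{\pi}$ yields
\[
|\hat{f}_k - \hat{\mu}_k| \;\leq\; C_0\,e^2\,\|\mu\|_{C^1} \cdot \frac{1}{\sqrt{\pi}}\int_{0}^{2\pi} dx \;=\; 2\sqrt{\pi}\,C_0\,e^2\,\|\mu\|_{C^1},
\]
which gives the required bound with $C := 2\sqrt{\pi}\,C_0$, uniformly in $k$.

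No step here is really an obstacle: the entire argument rests on the fact that $\varphi_L$ is a small $C^0$-perturbation of the identity (Lemma \ref{lazutkin-change}) and that the trigonometric kernels $v_k$ are uniformly bounded. The only minor point to watch is the initial rewriting, where one must recognize that renaming the variable of integration removes what would otherwise be an awkward Jacobian term; attempting instead to perform the change of variables $\varphi = \varphi_L(x)$ inside $\hat{\mu}_k$ would produce a factor $\varphi_L'(x)$ that must then be dealt with via $\|\varphi_L' - 1\|_{C^0} \leq C_1 e^2$ (from Lemma \ref{lazutkin-change} with $r=1$), giving the same bound but through a longer computation.
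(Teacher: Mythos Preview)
Your proof is correct and is exactly the elaboration the paper has in mind: the paper simply states that this lemma ``directly follows from Lemma~\ref{lazutkin-change}'', and your argument---rewriting the difference as $\int [\mu(\varphi_L(x))-\mu(x)]v_k(x)\,dx$, then applying the mean value theorem together with the $C^0$ bound on $\varphi_L-\mathrm{Id}$---is precisely how one would cash this out. One trivial correction: the bound $|v_k|\le 1/\sqrt{\pi}$ fails for $k=0$ since $v_0\equiv 1$, but replacing it by the uniform bound $|v_k|\le 1$ fixes this at the cost of a harmless change in the constant.
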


 The two lemmata above directly follow from Lemma \ref{lazutkin-change}.\medskip
 
{Let us now show the following result, {where we assume $0< e\leq e_0$ and $C_re^2\leq \frac{1}{2}$.}}
 
 \begin{lemma}\label{key-lemma1}
For any integer $q>q_0$, if the billiard dynamics inside the domain $\Omega$ 
admits {an integrable rational caustic} with rotation number $1/q$, then
 $$\big|\langle f_{\mu},\cC_{\pm q}\rangle_r\big|\leq C(M)q^7\|f_{\mu}\|_{C^1}^{\frac{2(m-r-2)}{m-1}}.$$
 \end{lemma}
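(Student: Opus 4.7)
The plan is to rewrite $\langle f_{\mu},\cC_{\pm q}\rangle_{r}$ as essentially $q^{r}$ times a Fourier coefficient of the pulled-back function $F(\theta):=f_{\mu}(X_{q}(\theta))=\mu(\varphi_{\lambda_{1/q}}(\theta))$, and then to bound this Fourier coefficient by combining the rational-caustic identity from Lemma \ref{rational-condition} with a simple interpolation between the $C^{0}$- and $C^{m-1}$-estimates it supplies.

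First, since $\cC_{q}$ has zero mean, the definition of the $H^{r}$ inner product gives $\langle f_{\mu},\cC_{q}\rangle_{r}=\int_{\mathbb{T}}f_{\mu}^{(r)}(x)\,c_{q}(x)\,dx$. Performing the substitution $x=X_{q}(\theta)$ and using the elementary identity $c_{q}(X_{q}(\theta))\,X'_{q}(\theta)=\cos(q\theta)/\sqrt{\pi}$ (immediate from the definition of $c_{q}$) one obtains
\[
\langle f_{\mu},\cC_{q}\rangle_{r}=\frac{1}{\sqrt{\pi}}\int_{\mathbb{T}}f_{\mu}^{(r)}(X_{q}(\theta))\,\cos(q\theta)\,d\theta.
\]
I plan to expand $f_{\mu}^{(r)}(X_{q}(\theta))$ via the Fa\`a di Bruno formula as $F^{(r)}(\theta)/(X'_{q}(\theta))^{r}$ plus a linear combination of $F^{(j)}(\theta)$, $j<r$, with coefficients involving derivatives of $X_{q}-\mathbb{Id}$, and then to integrate by parts $r$ times in $\theta$ (using periodicity). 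This converts the leading contribution into $\pm\pi q^{r}$ times either the $q$-th cosine or sine Fourier coefficient of $F$.

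Next, I will read off this Fourier coefficient from Lemma \ref{rational-condition} with $p=1$. The sum $G(\theta):=\sum_{k=1}^{q}F(\theta+2\pi k/q)$ is supported on Fourier modes which are multiples of $q$, and its $q$-th coefficient is $q\hat{F}_{q}$; reading the $q$-th coefficient of the identity $\lambda_{1/q}\,G(\theta)=c_{1/q}+\Upsilon_{1/q}(\theta)$ gives $q\lambda_{1/q}\hat{F}_{q}=\hat{\Upsilon}_{1/q,q}$, so that $|\hat{F}_{q}|\le C|\hat{\Upsilon}_{1/q,q}|$ by Lemma \ref{rot-no1}, and analogously for the sine coefficient. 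The two competing bounds $|\hat{\Upsilon}_{1/q,q}|\le\|\Upsilon_{1/q}\|_{C^{0}}\le Cq^{7}\|\mu\|_{C^{1}}^{2}$ and $|\hat{\Upsilon}_{1/q,q}|\le\|\Upsilon_{1/q}\|_{C^{m-1}}/q^{m-1}\le C/q^{m-2}$ from Lemma \ref{rational-condition} combine, via geometric mean with weight $\alpha\in[0,1]$, into
\[
|\hat{\Upsilon}_{1/q,q}|\le Cq^{7(1-\alpha)-(m-2)\alpha}\|\mu\|_{C^{1}}^{2(1-\alpha)}.
\]

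The choice $\alpha=(r+1)/(m-1)$ makes the exponent of $\|\mu\|_{C^{1}}$ equal to $2(m-r-2)/(m-1)$, and multiplying by the $q^{r}$ factor from the integration by parts produces a $q$-exponent of $6(m-r-2)/(m-1)\le 6$, which fits inside $q^{7}$. Replacing $\|\mu\|_{C^{1}}$ by $\|f_{\mu}\|_{C^{1}}$ via Lemma \ref{change-lazutkin-1} yields the stated bound; the case $\cC_{-q}$ is identical after swapping cosine and sine. I expect the main technical obstacle to be the control of the lower-order Fa\`a di Bruno remainders arising in the first step: one needs sufficient smallness of the derivatives of $X_{q}-\mathbb{Id}$ of all orders up to $r$, which should follow by Landau--Kolmogorov interpolation between the $C^{1}$-estimate of Lemma \ref{lemma-q-lazutkin} and uniform $C^{k}$-bounds on $X_{q}$ inherited from the smoothness of the ellipse, ensuring that those remainders are dominated by the principal contribution.
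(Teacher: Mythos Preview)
Your plan follows essentially the same route as the paper: both arguments rest on Lemma~\ref{rational-condition}, both reduce $\langle f_\mu,\cC_{\pm q}\rangle_r$ to the integral $\frac{1}{\sqrt\pi}\int (f_\mu^{(r)})(X_q(\theta))\,\mathrm{trig}(q\theta)\,d\theta$, and both interpolate between the $C^0$ and $C^{m-1}$ bounds on $\Upsilon_{1/q}$. Your geometric-mean interpolation of the single Fourier coefficient $\hat\Upsilon_{1/q,q}$ (with weight $\alpha=(r+1)/(m-1)$) and the paper's Sobolev interpolation $\|\tilde\Upsilon\|_{C^r}\le C\|\tilde\Upsilon\|_{C^{m-1}}^{(r+1)/(m-1)}\|\tilde\Upsilon\|_{C^0}^{(m-r-2)/(m-1)}$ are two packagings of the same estimate and give identical exponents.

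The one substantive difference is how the change of variables is handled. The paper writes $D^r f_\mu(X_q(\theta))$ and in two consecutive lines silently uses it to mean both $(f_\mu\circ X_q)^{(r)}(\theta)$ (when differentiating the caustic identity) and $(f_\mu^{(r)})(X_q(\theta))$ (when changing variables to $x$), thereby skipping exactly the Fa\`a di Bruno discrepancy you identify. So on this point you are being more careful than the paper. However, your own treatment of the discrepancy is not quite clean either: after Fa\`a di Bruno the leading term is $\int F^{(r)}(\theta)(X_q'(\theta))^{-r}\cos q\theta\,d\theta$, and integrating by parts $r$ times does \emph{not} give $\pm\pi q^r\hat F_q$ on the nose---the weight $(X_q')^{-r}$ remains, contributing an additional error of order $q^{r}\|F\|_{C^0}\cdot\|(X_q')^{-r}-1\|_{C^0}\lesssim C(e)q^{r-2}\|f_\mu\|_{C^1}$. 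This error carries only one power of $\|f_\mu\|_{C^1}$, not the target power $2(m-r-2)/(m-1)>1$, so for large $r$ (here $r=3q_0+1$) and $q$ ranging up to $\|\mu\|^{-1/15}$ as in the application, a naive bound does not obviously suffice. Your proposed Landau--Kolmogorov interpolation on $X_q-\mathbb{Id}$ is the right tool, but you will need to apply it in conjunction with further integration by parts (gaining extra decay in $q$ from the smoothness of $F\cdot((X_q')^{-r}-1)$), rather than just a $C^0$ bound, to close the argument.
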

 
 \medskip
 
 \begin{proof}By Lemmata  \ref{rot-no1} and \ref{rational-condition}, 
 from  the existence of a smooth convex caustic with rotation number $1/q$,  we have that
$$ \sum_{k=1}^qf_{\mu}(X_q(\theta+\frac{k}{q}2\pi))=c_{1/q}+\Upsilon(X_q(\theta)),$$
and denoting 
{
$\widetilde \Upsilon=\Upsilon(X_q(\theta))$, 
$$
\|\widetilde  \Upsilon\|_{C^0}\leq q^8C\|f_{\mu}\|_{C^1}^2,\quad \text{and}\quad 
\|\widetilde  \Upsilon\|_{C^{m-1}}\leq q^2C'(M).
$$}
By the Sobolev interpolating inequality
$$\|u\|_{C^r}\leq C\|u\|_{H^{r+1}}\leq C\|u\|_{C^{m-1}}^{\frac{r+1}{m-1}}\|u\|_{C^0}^{\frac{m-r-2}{m-1}},$$
we have
$$\|\widetilde \Upsilon\|_{C^r}\leq q^8C'(M)\|f_{\mu}\|_{C^1}^{\frac{2(m-r-2)}{m-1}}.$$
Notice that
\[\begin{split}&\int_0^{2\pi}D^r\sum_{k=1}^qf_{\mu}(X_q(\theta+\frac{k}{q}2\pi))\sin q\theta\, d\theta\\
&=\sum_{k=1}^q\int_0^{2\pi}D^rf_{\mu}(X_q(\theta+\frac{k}{q}2\pi))\sin q\theta\, d\theta\\
&=q\int_{0}^{2\pi}D^rf_{\mu}(X_q(\theta))\sin q\theta \,d\theta,\end{split}\]
{here we denote $D^r$  for the $r$-th  derivative.}
Then
$$\Big|\int_{0}^{2\pi}D^rf_{\mu}(X_q(\theta))\sin q\theta \,d\theta\Big|\leq \frac{\|{\widetilde \Upsilon}\|_{C^r}}{q}\leq q^{7}C'(M)\|f_{\mu}\|_{C^1}^{\frac{2(m-r-2)}{m-1}}.$$
Let $x=X_q(\theta)$ and $\theta=X_q^{-1}(x)$, we have 
$$d\theta=\frac{1}{X_{q}'(X_{q}^{-1}(x))}dx.$$
Therefore,
\[\begin{split}&\int_0^{2\pi}D^rf_{\mu}(X_q(\theta))\sin q\theta \,d\theta=\int_{0}^{2\pi}D^rf_{\mu}(x)\sin qX_{q}^{-1}(x)\frac{1}{X'(X_q^{-1}(x))}dx\\
&=\sqrt{\pi}\int_{0}^{2\pi}D^rf_{\mu}(x)D^r\cC_{-q}(x)dx=\sqrt{\pi}\langle f_{\mu},\cC_{-q}\rangle_r.\end{split}\]
Hence 
$$
\big|\langle f_{\mu},\cC_{-q}\rangle_r\big|\leq q^{7}C'(M)\|f_{\mu}\|_{C^1}^{\frac{2(m-r-2)}{m-1}}.
$$
{Repeating a similar argument, we obtain the corresponding inequality for $\langle f_{\mu},\cC_{q}\rangle_r$.} 
 \end{proof}

\section{Proof of The Main result}\label{main-proof}
In this section, we prove  Theorems \ref{main-thm} and \ref{main-thm2}.

Denote $$n=3q_0,\quad \text{and}\quad  m=40q_0.$$  Let $\cE_e$ be an ellipse with eccentricity $e\in(0, 4e_0/5]$ and the semi-major axis $1$, 
where $e_0$ is from Lemma \ref{basis-proof}.  
Consider a $C^m$-smooth domain $\Omega$, which is a $C^n$-perturbation of the ellipse $\cE_e$, {\it i.e.},
in the elliptic coordinates associated to $\cE_e$,
$$\partial\Omega=\cE_e+\mu(\varphi),$$
where $$\|\mu\|_{C^n}\leq \varepsilon,\quad \text{and}\quad \|\mu\|_{C^m}\leq M.$$
Here $\varepsilon\leq e^{6q_0}$ is a small parameter to be determined below and $M>0$ 
is a fixed constant.  We make the following assumption:

\smallskip

{\it Assumption A: The domain $\Omega$ is $q_0$-rationally  integrable and the non-degeneracy conditions 
in Proposition~\ref{basic-estimate} \ hold true if $q_0\geq 6$. More exactly, matrices \eqref{matrixAodd} and 
\eqref{matrixEven} are non-degenerate.}

\smallskip

 {The proof consists of two main steps: 
\begin{itemize}
\item Find an ellipse $\mathcal E''$, close to $\mathcal E_e$, which  best approximates $\Omega$.
\item Show that $\Omega=\mathcal E''$. 
\end{itemize}}
 
\smallskip

{\bf Step 1} Denote $\mathbb E_\e=\mathbb E_\e(\cE_e)$ the set of ellipses whose $C^0$-Hausdorff 
distance to $\cE_e$ is not greater than $2\varepsilon$, {\it i.e.},
$$\mathbb E_\e:=\{\cE'\subset \mathbb{R}^2:\; \text{dist}_{H} (\cE',\;\cE_e)\leq 2\varepsilon\}.$$
Clearly, $\mathbb E_\e$ is a compact set in any $C^r$-topology 
(it  is completely determined by 5 parameters). 
We  choose $\varepsilon$ small enough so that the eccentricities of all  ellipses in $\mathbb E_\e$  
are between $4e/5$ and $5e/4$. For each $\cE'\in \mathbb E_\e$, we can write 
the domain $\Omega$ in the elliptic-coordinate frame associated to $\cE'$, as
$$
\partial\Omega=\cE'+\mu_{\cE'}(\varphi).
$$
Choosing a smaller $\varepsilon$ if necessary,  assuming 
$\|\mu_{\cE'}\|_{C^m}\leq 2M$, 
\mbox{$\forall \cE'\in \mathbb E_\e$},
from Lemma \ref{change-coordinate}, we know that 
$\|\mu_{\cE'}\|_{C^n}$ changes continuously 
with respect to $\cE'$. \medskip

{The proof is by contradiction. Assume that the statement 
of the theorem is not true -- {namely}, $\partial\Omega$ is 
not an ellipse -- since $\mathbb E_\e$ is compact, then we choose $\cE ''\in \mathbb E$ such that 
%$\|\mu_{\cE''}\|_{C^n}> 0$
%minimizes $\|\mu_{\cE'}\|_{C^n}$ for all $\cE'\in \mathbb E_\e,$
\[
\|\mu_{\cE''}\|_{C^n}=
\min \,\{\cE '\in \mathbb E:\ \|\mu_{\cE'}\|_{C^n}\}>0.
\] }
We also have that
$$
\|\mu_{\cE''}\|_{C^m}\leq 2M\quad \text{ and }\quad 
\|\mu_{\cE''}\|_{C^n}\leq \|\mu_{\cE'}\|_{C^n}.
$$
\

{{\bf Step 2.} {We prove the following: 
\blm 
%With the above notations 
There exists an ellipse 
$\bar{\cE}\in\mathbb E_\e$ such that in the elliptic-coordinate 
frame associated to $\bar{\cE}$
$$
\|\mu_{\bar{\cE}}\|_{C^n}<\frac{1}{2}\|\mu_{\cE''}\|_{C^n}.
$$
\elm

Notice that this contradicts minimality of 
$\|\mu_{\cE''}\|_{C^n}> 0$ among all $\cE' \in\mathbb E_\e.$ }\\

\begin{proof}
By Lemma \ref{ellipse}, there exists an ellipse 
$\bar{\cE}\in\mathbb E_\e$ such that in the elliptic-coordinate frame associated to $\bar{\cE}$, the domain 
$\Omega$ reads as
$$
\partial\Omega=\bar{\cE}+\mu_{\bar{\cE}}(\varphi),
$$
with
$$
\|\mu_{\bar{\cE}}\|_{C^m}\leq 2M,\quad 
\|\mu_{\bar{\cE}}\|_{C^n}\leq 2\|\mu_{\cE''}\|_{C^n}.$$
and the first five Fourier coefficients of $\mu_{\bar{\cE}}$ 
satisfy \eqref{first-5}. Write $\mu_{\bar{\cE}}$ as Fourier series, {\it i.e.},
$$
\mu_{\bar{\cE}}(\varphi):=\sum_{k=0}^{+\infty}a_{k}\cos (k\varphi) +b_k\sin (k\varphi).
$$
We split the perturbation into four parts: 
\begin{enumerate}
\item (Elliptic motions) $|k|{\leq} 2$;
\item (Low-order modes) $2<|k|\leq q_0$;
\item (Intermediate-order modes) $q_0<|k|<N:=\|\mu_{\cE''}\|_{C^n}^{-1/15}$;
\item (High-order modes) $|k|\geq N$.\medskip
\end{enumerate}
Each of these regimes requires different type of estimates. 
\medskip 

{\it $\bullet$ Elliptic motions: $|k|{\leq} 2$.}

\medskip 

By Lemma \ref{ellipse}, there exists $C>0$ such that 
\begin{equation}\label{first-5}
|a_k|\leq Ce^2\|\mu_{\cE''}\|_{C^1}, \quad |b_k|\leq Ce^2\|\mu_{\cE''}\|_{C^1}, \quad |k|\leq 2.
\end{equation}

\medskip 

{\it $\bullet$ Low-order modes: $2<|k|\leq q_0$.}

\medskip 

{With  Assumption A, from Proposition \ref{basic-estimate}   we have that there exists 
$C_{q_0}>0$ depending only on $q_0$ such  that 
\begin{equation}\label{lower-q0-mode}|a_k|,\;|b_k|\leq 
C_{q_0}e^2\|\mu_{\bar{\cE}}\|_{C^n}\leq 2C_{q_0}e^2\|\mu_{\cE''}\|_{C^n}, \quad 
3\leq k\leq q_0.\end{equation}}
Denote by $x$ the Lazutkin parametrization of the ellipse $\bar{\cE}$. Define 
$$
F(x):=\mu_{\bar{\cE}}(\varphi(x)).
$$ 
By Lemma \ref{change-lazutkin-1}, we have
{$$\|\mu_{\bar{\cE}}\|_{C^n}\leq (1-C_ne^2)^{-1}\|F\|_{C^n}$$
and 
$$
\|F\|_{C^n}\leq (1+C_ne^2)\|\mu_{\bar\cE}\|_{C^n}\leq 2(1+C_ne^2)\|\mu_{\cE''}\|_{C^n}.
$$ }
%\medskip

We consider the Hilbert space $H^{n+1}(\mathbb{T})$ and define the basis 
\mbox{$\{\cC_k,k\in\mathbb{Z}\}$} for  $H^{n+1}(\mathbb{T})$ like the one defined  in 
Section \ref{deformed-fourier}.  
Denote 
$$\alpha_k^F=\langle F, \cC_k\rangle_{n+1}\quad k\in\mathbb{Z}.
$$ 
Then, due to Lemma \ref{change-lazutkin-2}, \eqref{first-5} and \eqref{lower-q0-mode}, we have that there exists $\bar{C}_{q_0}>0$ such that
$$|\alpha^F_k|\leq(1\wedge|k|^{n+1})\bar C_{q_0}e^2\|\mu_{\cE''}\|_{C^n} \quad |k|\leq q_0.
$$
Therefore, we have
$$\sum_{k=-q_0}^{q_0}(\alpha_k^F)^2\leq 3\bar C_{q_0}q_0^{2n+3}e^4\|\mu_{\cE''}\|^2_{C^n}.
$$

\medskip 

{\it $\bullet$ Intermediate-order modes: $q_0<|k|<N:=\|\mu_{\cE''}\|_{C^n}^{-1/15}$.}

\medskip

By Lemma \ref{key-lemma1}, for $q_0<|k|<N$ we have that
$$|\alpha_k^F|\leq C(M)|k|^7\|F\|_{C^1}^{\frac{2(m-n-3)}{m-1}}\leq 
4C(M)|k|^7\|\mu_{\cE''}\|_{C^{1}}^{\frac{2(m-n-3)}{m-1}}.$$
So we have,
$$\sum_{q_0<|k|<N}(\alpha_k^F)^2\leq C(M)N^{15}\|\mu_{\cE''}\|_{C^{1}}^{\frac{4(m-n-3)}{m-1}}.$$

\medskip 

{\it $\bullet$  High-order modes: $|k|\geq N$.}

\medskip

For $|k|\geq N$, due to Lemma \ref{high-basis}, we have
$$|\alpha_k^F|\leq \frac{\|F\|_{C^{n+2}}}{|k|}\leq \frac{C\|\mu_{\cE''}\|_{C^{n+2}}}{|k|}.$$
So we have 
$$\sum_{|k|\geq N}(\alpha_k^F)^2\leq 
\sum_{|k|\geq N}C\frac{\|\mu_{\cE''}\|^{2}_{C^{n+2}}}{k^2}\leq \frac{C}{N}\|\mu_{\cE''}\|_{C^{n+2}}^2.$$
Using  Sobolev interpolation inequality, we have
$$
\|\mu_{\cE''}\|_{C^{n+2}}\leq C\|\mu_{\cE''}\|_{H^{n+3}}\leq 
C\|\mu_{\cE''}\|_{C^m}^{\frac{3}{m-n}}\|\mu_{\cE''}\|_{C^{n}}^{\frac{m-n-3}{m-n}}\leq 
C(M)\|\mu_{\cE''}\|_{C^{n}}^{\frac{m-n-3}{m-n}}.$$
 Since $m=40q_0$ and $n=3q_0$, we have 
 $$
 \frac{2(m-n-3)}{m-n}\geq\frac{72}{37}\quad 
 \text{and}\quad \frac{4(m-n-3)}{m-1}\geq\frac{18}{5}.
 $$ 
 Choose $N=\|\mu_{\cE''}\|_{C^n}^{-1/15}$. Then, we obtain
$$\frac{1}{N}C\|\mu_{\cE''}\|_{C^{n+2}}^2\leq 
CM\|\mu_{\cE''}\|_{C^n}^{72/37+1/15}=C(M)\|\mu_{\cE''}\|_{C^n}^{2+7/555}$$
and
$$C(M)N^{15}\|\mu_{\cE''}\|_{C^{n}}^{\frac{4(m-n-3)}{m-1}}\leq C(M)\|\mu_{\cE''}\|_{C^n}^{13/5}.$$
Then, due to Corollary \ref{key-corollary1}, we conclude
$$
\|F\|^2_{H^{n+1}}\leq 
C'(e)\big(3\bar C_{q_0}q_0^{2n+3}e^4\|\mu_{\cE''}\|_{C^n}^2+C(M)\|\mu_{\cE''}\|_{C^n}^{2+7/555}\big).$$
By  Sobolev embedding theorem, we have 
\[\begin{split}\|\mu_{\bar{\cE}}\|^2_{C^n}&\leq \frac{1}{(1-C_ne^2)^{-2}}\|F\|^2_{C^n} 
\leq \frac{C'_n}{(1-C_ne^2)^{-2}}\|F\|^2_{n+1}\\
&\leq \frac{3C'_nC'(e)\bar C_{q_0}}{(1-C_ne^2)^{-2}}q_0^{2n+3}e^4\|\mu_{\cE''}\|_{C^n}^2+
C(M)\|\mu_{\cE''}\|_{C^n}^{2+7/555}.\end{split}\]
Hence, if 
\begin{equation}
\frac{3C'_nC'(e)\bar C_{q_0}}{(1-C_ne^2)^{-2}}q_0^{2n+3}e^4<\frac{1}{16}
\end{equation} 
and $\varepsilon$ is small enough,  we get
$$
\|\mu_{\bar{\cE}}\|_{C^n}<\frac{1}{2}\|\mu_{\cE''}\|_{C^n},
$$
which contradicts the minimality of $\|\mu_{\cE''}\|_{C^n}.$
So $\partial\Omega$ must be an ellipse.
\end{proof}

 \appendix
 \section{Elliptic Polar Coordinates}
Consider an ellipse 
\[\
\cE=\Big\{(x,y)\in\mathbb{R}^2:\;\frac{x^2}{a^2}+\frac{y^2}{b^2}=1\Big\},\quad a>b>0.
\]
Associated to $\cE$, there exists an elliptic-coordinate frame $(\mu,\varphi)$ given by the relations
\[
\left\{\begin{array}{l}
x=c\,\cosh \mu\,\cos\varphi \\\ y=c\,\sinh \mu\,\sin\varphi,
\end{array}\right.
\]
where $c=\sqrt{a^2-b^2}$ is the semi-focal distance of $\cE$. 

Let $e$ denote the eccentricity and $\mu_0:=\cosh^{-1}(e^{-1})$; 
then,  $\cE$ in this elliptic-coordinate frame is  represented by
$$
\cE=\{(\mu_0,\varphi):\;\varphi\in[0,2\pi)\}.
$$
Hence, any (small) smooth perturbation $\partial \Omega$ of the ellipse $\cE$ can be written in elliptic-coordinate frame as
$$
\partial\Omega=\{(\mu_0+\mu(\varphi),\varphi):\; \varphi\in[0,2\pi)\},
$$
where $\mu(\varphi)$ is a  $2\pi$-periodic smooth functions;
hereafter we will use the shorthand
$$\partial\Omega=\cE+\mu(\varphi).
$$

\medskip

\begin{lemma}\label{change-coordinate}\cite[Lemma 35]{KS}
Let  $\E_{e_0,c}$ be an ellipse of eccentricity $e_0=1/\cosh \mu_0$ and semi-focal distance $c$, and
suppose that $\partial \Omega$ is a  perturbation of $\E_{e_0,c}$,  which can be written (in the elliptic-coordinate  frame $(\mu,\f)$ associated to $\E_{e_0,c}$) as $\partial\Omega = \E_{e_0,c} + \mu_\Omega(\f).
$
Consider another ellipse $\overline{\E}$ sufficiently close to $\E_{e_0,c}$, which can be written (in elliptic-coordinates frame associated to $\E_{e_0,c}$)  as
$$
\overline{\E} = \E_{e_0,c} + \mu_{\overline{\E}}.
$$
If $\overline{\E}$ is sufficiently close to $\E_{e_0,c}$, we can write (in the elliptic-coordinate frame $(\overline{\mu},\overline{\f})$ associated to $\overline{\E}$)
$\partial \Omega = \overline{\E} + \overline{\mu}_\Omega(\overline{\f})$, for some function $\overline{\mu}_\Omega$. Then, there exists $C=C(e_0,c,n)$ such that 
\begin{equation}\label{stima1}
\|\mu_\Omega (\f) - (\mu_{\overline{\E}}(\f) + \overline{\mu}_\Omega(\f) )\|_{C^n} \leq C \|\mu_{\overline{\E}}\|_{C^n} \| \| \mu_\Omega - \mu_{\overline{\E}}\|_{C^n}.
\end{equation}
In particular,  for any $C'>1$, if $\overline{\E}$ is sufficiently close to $\E_{e_0,c}$ then we have
\begin{equation}\label{stima2}
\frac{1}{C'} \|\mu_\Omega - \mu_{\overline{\E}}\|_{C^n} \leq \|\overline{\mu}_{\Omega}\|_{C^n} \leq
{C'} \|\mu_\Omega - \mu_{\overline{\E}}\|_{C^n}.
\end{equation}
\end{lemma}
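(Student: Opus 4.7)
The plan is to interpret the statement as a quantitative continuity estimate for the nonlinear change of coordinates between the two elliptic-coordinate frames, and then to exploit a simple ``double vanishing'' property of the error term $R(\mu_\Omega, \mu_{\overline{\E}}):=\mu_\Omega(\f)-\mu_{\overline{\E}}(\f)-\overline{\mu}_\Omega(\f)$.

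First, I would encode the passage from the frame $(\mu,\f)$ associated to $\E_{e_0,c}$ to the frame $(\overline{\mu},\overline{\f})$ associated to $\overline{\E}$ as a smooth diffeomorphism $T=T_{\overline{\E}}$ of a (punctured) neighborhood of $\E_{e_0,c}$ in $\R^2$. Since the ellipse $\overline{\E}$ lies in the finite-dimensional family $\mathbb E_\e$, the map $T_{\overline{\E}}$ depends smoothly on finitely many parameters (e.g.\ center, axes, tilt); moreover, $T_{\overline{\E}}=\mathrm{Id}$ when $\overline{\E}=\E_{e_0,c}$. Consequently, writing $T_{\overline{\E}}=\mathrm{Id}+\Phi_{\overline{\E}}$, the remainder $\Phi_{\overline{\E}}$ vanishes at $\mu_{\overline{\E}}\equiv 0$ and, by Taylor expansion in the (finite-dimensional) parameter space, satisfies $\|\Phi_{\overline{\E}}\|_{C^n}\le C(e_0,c,n)\,\|\mu_{\overline{\E}}\|_{C^n}$, with analogous bounds on its derivatives.

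Next, plugging the boundary $\partial\Omega=\{(\mu_0+\mu_\Omega(\f),\f)\}$ into $T_{\overline{\E}}$ and imposing that the image be $\{(\overline{\mu}_0+\overline{\mu}_\Omega(\overline{\f}),\overline{\f})\}$ yields an implicit relation that defines $\overline{\mu}_\Omega$ as a smooth functional of the pair $(\mu_\Omega,\mu_{\overline{\E}})$. The key observation is that this functional satisfies two boundary conditions:
\begin{itemize}
\item[(i)] if $\mu_{\overline{\E}}\equiv 0$, then $T_{\overline{\E}}=\mathrm{Id}$ and therefore $\overline{\mu}_\Omega=\mu_\Omega$;
\item[(ii)] if $\mu_\Omega=\mu_{\overline{\E}}$ (i.e.\ $\partial\Omega=\overline{\E}$), then $\overline{\mu}_\Omega\equiv 0$.
\end{itemize}
In both cases $R(\mu_\Omega,\mu_{\overline{\E}})\equiv 0$. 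Introducing $\nu:=\mu_\Omega-\mu_{\overline{\E}}$, we see that $R$ is a smooth functional of $(\mu_{\overline{\E}},\nu)$ vanishing whenever either argument vanishes; a Hadamard-type division argument (applied in the finite-dimensional parameter space describing $\overline{\E}$, followed by integration along a linear path $t\mapsto t\mu_{\overline{\E}}$) then yields $R=\mu_{\overline{\E}}\cdot \nu\cdot G(\mu_{\overline{\E}},\nu)$ for some smooth functional $G$ bounded in $C^n$ by a constant $C(e_0,c,n)$ provided $\|\mu_{\overline{\E}}\|_{C^n}$ is small enough. Using the standard multiplicative property of the $C^n$-norm, this yields \eqref{stima1}.

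The second inequality \eqref{stima2} then follows immediately from the triangle inequality: \eqref{stima1} gives
\[
\bigl|\,\|\overline{\mu}_\Omega\|_{C^n}-\|\mu_\Omega-\mu_{\overline{\E}}\|_{C^n}\,\bigr|
\le C\,\|\mu_{\overline{\E}}\|_{C^n}\,\|\mu_\Omega-\mu_{\overline{\E}}\|_{C^n},
\]
so that for $\overline{\E}$ close enough to $\E_{e_0,c}$ (say $C\|\mu_{\overline{\E}}\|_{C^n}\le 1-1/C'$), both sides of \eqref{stima2} follow. The main technical obstacle is the Hadamard-type division step, which requires verifying that $R$ really does depend smoothly on the auxiliary parameters with controlled $C^n$-seminorms; this reduces to uniform bounds on derivatives of $T_{\overline{\E}}$, which are straightforward since the elliptic-coordinate map is explicitly given by elementary functions of the finite-dimensional parameters of $\overline{\E}$ and is non-degenerate away from the foci.
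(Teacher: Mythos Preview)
The paper does not actually prove this lemma: it is quoted verbatim from \cite[Lemma~35]{KS}, with only a remark that the $C^1$ argument there carries over to $C^n$. So there is no ``paper's own proof'' to compare against; I can only comment on the soundness of your sketch.

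Your strategy is correct in outline and is essentially the natural one: parametrize the nearby ellipse $\overline{\E}$ by a finite-dimensional parameter $p$ (so that $\|\mu_{\overline{\E}}\|_{C^n}\sim |p|$), observe the two degeneracies $R(0,\nu)\equiv 0$ and $R(p,0)\equiv 0$, and conclude a bilinear bound. One point deserves more care than you give it: the line ``$R=\mu_{\overline{\E}}\cdot\nu\cdot G$'' followed by ``the standard multiplicative property of the $C^n$-norm'' is not literally right, because $\nu$ is infinite-dimensional and the factorization is not a pointwise product. What the double-vanishing actually gives is the integral identity
\[
R(p,\nu)=\int_0^1\!\!\int_0^1 D_p D_\nu R(sp,t\nu)[p,\nu]\,ds\,dt,
\]
and the estimate \eqref{stima1} then follows once you check that the mixed second derivative $D_pD_\nu R$ is bounded as a bilinear map $\R^5\times C^n(\T)\to C^n(\T)$, uniformly for small $(p,\nu)$. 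This in turn amounts to bounding, in $C^n$, the derivative of the implicitly defined map $\mu_\Omega\mapsto \overline{\mu}_\Omega$ and of the reparametrization $\overline{\f}\mapsto \f$; both are compositions with the explicit elliptic-coordinate change and are handled by Fa\`a di Bruno together with the fact that the coordinate change is analytic and non-degenerate near $\partial\Omega$ (away from the foci). You identify this as ``the main technical obstacle'' --- it is indeed the only real work, and it is routine but should be stated as an operator bound rather than as a pointwise factorization. Your derivation of \eqref{stima2} from \eqref{stima1} via the triangle inequality is fine.
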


{\begin{remark} Lemma 35 in \cite{KS} is stated for $C^1$-norm. The same arguments also work for $C^n$-norm, $n=1,\dots,m$.\\
\end{remark}}

 \section{Elliptic Motions in elliptic coordinates}
 In this section  we consider a special class of  perturbations of the ellipse $\cE_{e,c}$ (see also \cite[Appendix B]{KS}).
These perturbations  written in the corresponding elliptic coordinates are of the form
 $$
 \partial \Omega=\cE_{e,c}+\tilde\mu(\varphi),
 $$
 with 
 $$
 \tilde\mu(\varphi)=a_0+a_1\cos\varphi+a_{-1}\sin\varphi+a_2\cos2\varphi+a_{-2}\sin2\varphi.
 $$
 We show that for this type of perturbations, there exists an ellipse $\bar{\cE}$, represented 
 in  elliptic coordinates as $ \bar{\cE}=\cE_{e,c}+\bar{\mu}(\varphi)$ such that
 $$\tilde\mu(\varphi)-\bar{\mu}(\varphi)=O(e^2\tilde{\mu}).$$

 Let us consider a domain $\cD\subset\mathbb{R}^2$ close to $\cE_{e,c}$, 
 \[\partial\cD:\quad 
 \begin{cases}x=c\;\cosh\big(\mu_0+\mu(\varphi)\big)\cos\varphi,\\
 y=c\;\sinh \big(\mu_0+\mu(\varphi)\big)\sin\varphi,
 \end{cases}\quad \varphi\in[0,2\pi],
\]
where $\mu_0 = \cosh^{-1}(1/e)$,
$\mu(\varphi)$ is a smooth $2\pi$-periodic function and we assume that $\|\mu\|_{C^1}$ is small enough.

 Let us define 
  \[\begin{split}r_{\mu}(\varphi)&:=(c\cosh(\mu_0+\mu(\varphi))\cos\varphi)^2+(c\sinh(\mu_0+\mu(\varphi))\sin\varphi)^2\\
 &=(a\cos\varphi+ a\sqrt{1-e^2}\mu(\varphi)\cos\varphi+O(\mu^2))^2\\
 &\quad+(a\sqrt{1-e^2}\sin\varphi+ a\mu(\varphi)\sin\varphi+O(\mu^2))^2\\
 &=a^2\cos^2\varphi+2a^2\sqrt{1-e^2}\mu(\varphi)\cos^2\varphi+O(\mu^2)\\
 &\quad+a^2(1-e^2)\sin^2\varphi+2a^2\sqrt{1-e^2}\mu(\varphi)\sin^2\varphi+O(\mu^2)\\
 &=a^2(1-e^2\sin^2\varphi)+2a^2\sqrt{1-e^2}\mu(\varphi)+O(\mu^2).
 \end{split}\]
 Here we have used Taylor's expansion and the fact that 
$$c\cosh\mu_0=a,\quad c\sinh\mu_0=b=a\sqrt{1-e^2}.$$
 \subsection{Homotheties}
For any   $\lambda\in\mathbb{R}$,  let us denote the homothety of the ellipse $\cE_{e,c}$ by
$$\cE[\lambda,\cE_{e,c}]:=\exp[\lambda] \cE_{e,c}.$$
Let $\mu_{\lambda}(\varphi)$ be the function representing $\cE[\lambda,\cE_{e,c}]$  in the elliptic-coordinate frame associated to $\cE_{e,c}$. Then we have
\[\left(\begin{array}{c}c\; \cosh\big(\mu_0+\mu_{\lambda}(\varphi)\big)\cos\varphi \\
c\;\sinh\big(\mu_0+\mu_{\lambda}(\varphi)\big)\sin\varphi\end{array}\right)=\exp[\lambda]\left(\begin{array}{c}c\;\cosh(\mu_0)\cos\big(\varphi_{\lambda}(\varphi)\big) \\ c\;\sinh(\mu_0)\sin\big(\varphi_{\lambda}(\varphi))\end{array}\right),
 \]
 where $\|\varphi_{\lambda}(\varphi)-\varphi\|_{C^n}\leq C_n\lambda.$
 For $|\lambda|$ small enough, using Taylor's expansion, denoting 
 $\Delta \varphi_{\lambda}:=\varphi_{\lambda}-\varphi$, we have
 \[\begin{split}r_{\lambda}(\varphi):&=(\exp[\lambda]a\cos(\varphi_{\lambda}))^2+(\exp[\lambda]a\sqrt{1-e^2}\sin\varphi_{\lambda})^2\\
 &=a^2\big(\cos\varphi-\Delta\varphi_{\lambda}\sin\varphi+\lambda \cos\varphi+O(\lambda^2)\big)^2\\
 &\quad+a^2(1-e^2) \big(\sin\varphi+\Delta\varphi_{\lambda}\cos\varphi+\lambda\sin\varphi+O(\lambda^2)\big)^2\\
 &=a^2(\cos^2\varphi-2\Delta\varphi_{\lambda}\sin\varphi\cos\varphi+2\lambda\cos^2\varphi+O(\lambda^2))\\
 &\quad +a^2(1-e^2)(\sin^2\varphi+2\Delta\varphi_{\lambda}\sin\varphi\cos\varphi+2\lambda\sin^2\varphi+O(\lambda^2))\\
&=a^2[1-e^2\sin^2\varphi+2\lambda-2\lambda e^2\sin^2\varphi-e^2\Delta\varphi_{\lambda}\sin2\varphi+O(\lambda^2)].
 \end{split}\]
 From $r_{\mu_{\lambda}}(\varphi)=r_{\lambda}(\varphi)$, we get
 \[\begin{split}\mu_{\lambda}(\varphi)=\frac{\lambda}{\sqrt{1-e^2}}-\frac{2\lambda e^2\sin^2\varphi-e^2\Delta\varphi\sin2\varphi}{2\sqrt{1-e^2}}+O(\lambda^2)
 =\lambda+O(e^2\lambda).\end{split}\]
 \subsection{Translations} For any $\alpha=(\alpha_1,\alpha_2)\in\mathbb{R}^2$, 
let us denote the translation of $\cE_{e,c}$ in the direction of the vector $\alpha$ by 
 $$\mathcal{T}[\alpha,\cE_{e,c}]:=\cE_{e,c}+\alpha.$$
 We look for the function $\mu_{\alpha}(\varphi)$  that defines $\mathcal{T}[\alpha,\cE_{e,c}]$ 
 in the elliptic coordinates of the ellipse $\cE_{e,c}$. Then, we have
 \[\left(\begin{array}{c}c\; \cosh\big(\mu_0+\mu_{\alpha}(\varphi)\big)\cos\varphi \\
c\;\sinh\big(\mu_0+\mu_{\alpha}(\varphi)\big)\sin\varphi\end{array}\right)=\left(\begin{array}{c}c\;\cosh(\mu_0)\cos\big(\varphi_{\alpha}(\varphi)\big)+\alpha_1 \\ c\;\sinh(\mu_0)\sin\big(\varphi_{\alpha}(\varphi))+\alpha_2\end{array}\right),
 \]
 where $$\|\varphi_{\alpha}-\varphi\|_{C^n}\leq C_n|\alpha|.$$

 For $|\alpha|$ small enough, using Taylor's expansion and denoting 
 $$\Delta\varphi_{\alpha}:=\varphi_{\alpha}-\varphi,$$ we have
 \[\begin{split}r_{\alpha}(\varphi):&=(a\cos\varphi_{\alpha}+\alpha_1)^2+(a\sqrt{1-e^2}\sin\varphi_{\alpha}+\alpha_2)^2\\ 
 &=(a\cos\varphi-a\Delta\varphi_{\alpha}\sin\varphi+O(|\alpha|^2)+\alpha_1)^2\\
 &\quad+ (b\sin\varphi+b\Delta\varphi_{\alpha}\cos\varphi+O(|\alpha|^2)+\alpha_2)^2\\
 &=a^2\cos^2\varphi-2a^2\Delta\varphi_{\alpha}\cos\varphi\sin\varphi+2\alpha_1a\cos\varphi+O(|\alpha|^2)\\
 &\quad +b^2\sin^2\varphi+2b^2\Delta\varphi_{\alpha}\cos\varphi\sin\varphi+2b\alpha_2\sin\varphi+O(|\alpha|^2)\\
 &=a^2-a^2e^2\sin^2\varphi+2\alpha_1a\cos\varphi+2\alpha_2b\sin\varphi-a^2e^2\Delta\varphi_{\alpha}\sin2\varphi+O(|\alpha|^2).
 \end{split}\]
 Since $r_{\mu_{\alpha}}=r_{\alpha}$, we obtain
\[\begin{split}\mu_{\alpha}(\varphi)&=\frac{\alpha_1\cos\varphi+\alpha_2\sqrt{1-e^2}\sin\varphi-\frac{1}{2}ae^2\sin2\varphi+O(|\alpha|^2)}{a\sqrt{1-e^2}}\\
&=\frac{\alpha_1}{a}\cos\varphi+\frac{\alpha_2}{a}\sin\varphi+O(e^2|\alpha|).
\end{split}\]
 
 \subsection{Hyperbolic rotations} For any $\beta=(\beta_1,\beta_2)$, let us denote by $\cH[\beta,\cE_{e,c}]$  the ellipse obtained by applying to $\cE_{e,c}$ the  hyperbolic rotation generated by the linear map
  \[\cH[\beta]=\exp\left(\begin{array}{cc}\beta_1 & \beta_2 \\
  \beta_2 & -\beta_1\end{array}\right)=\left(\begin{array}{cc}1+\beta_1 & \beta_2 \\
  \beta_2 & 1-\beta_1\end{array}\right)+O(|\beta|^2).\]
  Let $\mu_{\beta}(\varphi)$ be the function that defines $\cH[\beta,\cE_{e,c}]$ 
  in the elliptic-coordinate frame associated to $\cE_{e,c}$. Then we have
\[\begin{split}&\quad\left(\begin{array}{c}c\; \cosh\big(\mu_0+\mu_{\beta}(\varphi)\big)\cos\varphi \\
c\;\sinh\big(\mu_0+\mu_{\beta}(\varphi)\big)\sin\varphi\end{array}\right)\\
&=\cH[\beta]\left(\begin{array}{c}c\;\cosh(\mu_0)\cos\big(\varphi_{\beta}(\varphi)\big) \\ c\;\sinh(\mu_0)\sin\big(\varphi_{\beta}(\varphi))\end{array}\right)\\
&=\left(\begin{array}{c}(1+\beta_1)a\cos\varphi_{\beta}+\beta_2b\sin\varphi_\beta \\ \beta_2a\cos\varphi_\beta+(1-\beta_1)b\sin\varphi_\beta\end{array}\right)+O(|\beta|^2),
\end{split} \]
where
$\|\varphi_{\beta}-\varphi\|_{C^n}\leq C_n|\beta|.$
  For $|\beta|$ small enough, using Taylor's expansion and denoting
  $\Delta\varphi_{\beta}:=\varphi_{\beta}-\varphi,$
  we have
  \[\begin{split}r_{\beta}(\varphi):&=[(1+\beta_1)a\cos\varphi_\beta+\beta_2b\sin\varphi_\beta]^2\\\ 
  &\quad +[\beta_2a\cos\varphi_\beta+(1-\beta_1)b\sin\varphi_\beta]^2+O(|\beta|^2)\\ 
  &=[a\cos\varphi-a\Delta\varphi_\beta\sin\varphi+\beta_1a\cos\varphi+\beta_2b\sin\varphi+O(|\beta|^2)]^2\\
  &\quad +[\beta_2a\cos\varphi+b\sin\varphi+b\Delta\varphi_\beta\cos\varphi-b\beta_1\sin\varphi+O(|\beta|^2)]^2\\
  &=a^2\cos^2\varphi-a^2\Delta\varphi_{\beta}\sin2\varphi+2\beta_1a^2\cos^2\varphi+\beta_2ab\sin2\varphi\\
  &\quad+b^2\sin^2\varphi+ab\beta_2\sin2\varphi+b^2\Delta\varphi_\beta\sin2\varphi-2b^2\beta_1\sin^2\varphi+O(|\beta|^2)\\
  &=a^2-a^2e^2\sin^2\varphi+2ab\beta_2\sin2\varphi+2a^2\beta_1\cos2\varphi\\
  &\quad +2a^2e^2\sin^2\varphi+a^2e^2\Delta_{\beta}\sin2\varphi+O(|\beta|^2).
  \end{split}\]
  From $r_{\mu_{\beta}}(\varphi)=r_\beta(\varphi)$, we get
  \[\begin{split}\mu_{\beta}&=\frac{2ab\beta_2\sin2\varphi+
  2a^2\beta_1\cos2\varphi+2a^2e^2\sin^2\varphi+
  a^2e^2\Delta_\beta\sin2\varphi+O(|\beta|^2)}{2a^2\sqrt{1-e^2}}\\
  &=\beta_1\cos2\varphi+\beta_2\sin2\varphi+O(e^2|\beta|).
  \end{split}\]
 
 \medskip
  
 To sum up, combining with Lemma \ref{change-coordinate}, we obtain the following result.  \\
 
 \begin{lemma}\label{ellipse}Let $\cE_{e,c}$ be an ellipse of eccentricity $e\in(0,\frac{1}{2})$, and $\Omega$ be a small perturbation of $\cE_{e,c}$, which written  in the elliptic-coordinate frame associated to $\cE_{e,c}$ as
 $$\mu(\varphi)=a_0+a_1\cos\varphi+a_{-1}\sin\varphi+a_2\cos2\varphi+a_{-2}\sin2\varphi.$$
 Assume $\|\mu\|_{C^n}$ small enough {for some $n\geq 2$}, then there exists $C_n$, independent of the eccentricity $e$ and $\mu$, and an ellipse $\bar{\cE}$,
 $$\bar{\cE}= \cE_{e,c}+\bar{\mu}(\varphi),$$
 such that
 $$\|\mu-\bar{\mu}\|_{C^n}\leq C_ne^2\|\mu\|_{C^n}.$$
 \end{lemma}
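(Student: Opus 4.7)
My plan is to exhibit the ellipse $\bar{\cE}$ explicitly inside the natural $5$--parameter family generated by the three transformations whose elliptic--coordinate representations have been computed in this appendix: homothety $\exp[\lambda]$, translation $\mathcal{T}[\alpha]$ with $\alpha=(\alpha_1,\alpha_2)\in\R^2$, and hyperbolic rotation $\cH[\beta]$ with $\beta=(\beta_1,\beta_2)\in\R^2$. For a parameter vector $p=(\lambda,\alpha_1,\alpha_2,\beta_1,\beta_2)\in\R^5$ of sufficiently small norm, set
\[
\bar{\cE}(p):=\exp[\lambda]\circ\mathcal{T}[\alpha]\circ\cH[\beta]\,(\cE_{e,c}),
\]
and let $\bar{\mu}_p(\varphi)$ be its representation in the elliptic--coordinate frame associated to $\cE_{e,c}$. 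The three explicit calculations of the appendix show that each generator produces a perturbation whose leading term is a single pure Fourier mode of index $0$, $\pm 1$ or $\pm 2$, plus an error of order $e^2$ times the parameter.

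Next, I would apply the implicit function theorem to the map
\[
\Phi(p)\,:=\,\bigl(\widehat{\bar{\mu}}_{p,0},\;\widehat{\bar{\mu}}_{p,1},\;\widehat{\bar{\mu}}_{p,-1},\;\widehat{\bar{\mu}}_{p,2},\;\widehat{\bar{\mu}}_{p,-2}\bigr)\in\R^{5},
\]
where $\widehat{\bar{\mu}}_{p,k}$ denotes the $k$--th (real) Fourier coefficient of $\bar{\mu}_p$. By the appendix formulas, $D\Phi(0)=\mathrm{diag}(1,1/a,1/a,1,1)+O(e^{2})$, which is uniformly invertible for $e\in(0,1/2)$. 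Hence $\Phi$ is a smooth local diffeomorphism around $0$ (with constants uniform in $e$), so there exists a unique small $p^{*}$ with $\Phi(p^{*})=(a_{0},a_{1},a_{-1},a_{2},a_{-2})$ and $|p^{*}|\le C\|\mu\|_{C^{n}}$. Setting $\bar{\cE}:=\bar{\cE}(p^{*})$ and $\bar{\mu}:=\bar{\mu}_{p^{*}}$, by construction $\bar{\mu}$ has the same first five Fourier modes as $\mu$, and since $\mu$ itself has \emph{no} higher modes, the difference $\mu-\bar{\mu}$ consists entirely of Fourier coefficients of $\bar{\mu}$ of order $|k|\ge 3$.

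It remains to control those higher modes. By inspection of the expansions in this appendix, every contribution of $\bar{\mu}$ to modes $|k|\ge 3$ comes either from the $O(e^{2}|\lambda|)$, $O(e^{2}|\alpha|)$, $O(e^{2}|\beta|)$ remainders, or from cross--terms produced by the composition of the three maps (these are quadratic in $p^{*}$). Since $|p^{*}|\le C\|\mu\|_{C^{n}}$, the former are bounded by $C_{n}e^{2}\|\mu\|_{C^{n}}$ and the latter by $C_{n}\|\mu\|_{C^{n}}^{2}$, which is absorbed into $C_{n}e^{2}\|\mu\|_{C^{n}}$ thanks to the smallness assumption $\|\mu\|_{C^{n}}\ll e^{2}$. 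All expansions are in $C^{n}$ because the elliptic parametrization and the exponential/translation/hyperbolic-rotation actions are real-analytic in their respective arguments, which upgrades the $C^{0}$ bookkeeping to a $C^{n}$ bookkeeping with constants depending only on $n$. Finally, Lemma \ref{change-coordinate} is invoked to confirm that the resulting $\bar{\cE}$ may be equivalently written as $\cE_{e,c}+\bar{\mu}$ with the required estimate.

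The main obstacle I anticipate is the bookkeeping of the cross--terms generated by composing $\exp[\lambda]$, $\mathcal{T}[\alpha]$ and $\cH[\beta]$: the three formulas in the appendix were derived for each transformation applied in isolation, so the composed expansion carries additional $O(|p^{*}|^{2})$ corrections whose Fourier support spreads across all modes. The $C^{n}$ estimates for these cross-terms have to be tracked carefully, but since they are strictly quadratic in $p^{*}$ (hence in $\|\mu\|_{C^{n}}$) the required $e^{2}\|\mu\|_{C^{n}}$ bound is obtained under the smallness hypothesis on $\|\mu\|_{C^{n}}$.
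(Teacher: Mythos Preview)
Your approach is essentially the same as the paper's: both rest on the three explicit computations in Appendix~B showing that homothety, translation, and hyperbolic rotation produce, in elliptic coordinates, the pure modes $1,\ \cos\varphi,\ \sin\varphi,\ \cos 2\varphi,\ \sin 2\varphi$ respectively, each with an $O(e^{2}\cdot\text{parameter})$ remainder, followed by an appeal to Lemma~\ref{change-coordinate} to combine them. The paper is terser---it simply sets the parameters equal to the given Fourier coefficients (up to the obvious scalings $\lambda=a_0$, $\alpha=(a\,a_1,a\,a_{-1})$, $\beta=(a_2,a_{-2})$) and reads off the estimate, whereas you add an implicit-function-theorem step to match the first five Fourier coefficients \emph{exactly}. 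That refinement is correct but not needed: with the direct choice the residual is already $O(e^{2}\|\mu\|_{C^{n}})+O(\|\mu\|_{C^{n}}^{2})$, and the second term is absorbed under the same smallness hypothesis you invoke. Your reading of ``$\|\mu\|_{C^{n}}$ small enough'' as allowing dependence on $e$ (i.e.\ $\|\mu\|_{C^{n}}\ll e^{2}$) is the intended one and is consistent with every application of the lemma in the paper.
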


\medskip 

 \section{Expansion with respect to $e$} \label{e-expansion-sec}
The action-angle parametrization $\theta$ of the elliptic coordinate $\varphi$ corresponding to the caustic $C_\lambda$, expanded up to order $O(e^{2N+2})$, $J\in\mathbb{N}$ is as follows:
\begin{equation}\label{expansion1}\varphi(\theta,\lambda,e)=\theta+\sum_{j=1}^N\varphi_j(\theta)\frac{a^je^{2j}}{(a^2-\lambda^2)^j}+O(e^{2N+2}),\end{equation}
where the functions $\varphi_N(\theta)$ are of the form $$\varphi_j(\theta)=\sum_{l=1}^j\beta_{j,l}\sin (2l\theta).$$
We give below the explicit formulae for $\varphi_N(\theta)$ for $j=1,\dots,6$.  $$\varphi_1(\theta)=\frac{1}{8}\sin 2\theta,$$
$$\varphi_2(\theta)=\frac{1}{256}(16\sin 2\theta+\sin4\theta),$$
$$\varphi_3(\theta)=\frac{83\sin2\theta}{2048}+\frac{\sin4\theta}{256}+\frac{\sin6\theta}{6144},$$
$$\varphi_4(\theta)=\frac{121\sin2\theta}{4096}+\frac{29\sin4\theta}{8192}+\frac{\sin6\theta}{4096}+\frac{\sin8\theta}{131072},$$
$$\varphi_5(\theta)=\frac{12071\sin2\theta}{524288}+\frac{13\sin4\theta}{4096}+\frac{37\sin6\theta}{131072}+\frac{\sin8\theta}{65536}+\frac{\sin10\theta}{2621440},$$
$$\varphi_6(\theta)=\frac{19651\sin2\theta}{1048576}+\frac{47955\sin4\theta}{16777216}+\frac{235\sin6\theta}{786432}+\frac{45\sin8\theta}{2097152}+\frac{\sin10\theta}{1048576}+\frac{\sin12\theta}{50331648}.$$

\

{
\blm \label{lm:mu-expansion}
Let $$\mu(\varphi)=a_0+\sum_{k=1}^{+\infty}a_k\cos (k\varphi) +b_k\sin (k\varphi),$$
and $\mu(\varphi)\in C^m(\mathbb{T}).$ Then for $N\leq m-1$,  the expansion of the function $\mu(\varphi(\theta,\lambda,e))$ with respect to $e$ up to order $O(e^{2N+2})$ is
\begin{equation}\label{expansion2}\mu(\varphi(\theta,\lambda,e))=\mu(\theta)+
\sum_{j=1}^N P_j(\theta)\frac{a^je^{2j}}{(a^2-\lambda^2)^j}+O(e^{2N+2}\|\mu\|_{C^{N+1}}),\end{equation}
where the functions $P_j(\theta)$ are of the form
$$P_j(\theta)=\sum_{k=1}^{+\infty}\sum_{l=-j}^{j}\xi_{j,l}(k)\Big(a_k\cos ((k+2l)\theta) +b_k\sin ((k+2l)\theta) \Big).$$
The coefficients $\xi_{j,l}(k)$ can be explicitely computed and, for small $j$ and $l$,  they are presented below. 
\elm }

\

The functions $P_j(\theta)$, $j=1,\dots,6$ are explicitly given by
  \[\begin{split}P_1(\theta)&=\mu'(\theta)\varphi_1(\theta)
=\sum_{k=1}^{+\infty}\sum_{l=-1}^{1}\xi_{1,l}(k)\Big(a_k\cos ((k+2l)\theta) +b_k\sin ((k+2l)\theta) \Big),
\end{split}
\]
where
$$\xi_{1,-1}(k)=-\frac{k}{16},\quad\xi_{1,0}(k)=0,\quad\text{and}\quad \xi_{1,1}(k)=\frac{k}{16}.$$
%*********************************
\[\begin{split}P_2(\theta)&=\mu'(\theta)\varphi_2(\theta)+\frac{1}{2}\mu''(\theta)(\varphi_1(\theta))^2\\
&=\sum_{k=1}^{+\infty}\sum_{l=-2}^{2}\xi_{2,l}(k)\Big(a_k\cos ((k+2l)\theta) +b_k\sin ((k+2l)\theta) \Big),
\end{split}\]
where 
$$\xi_{2,-2}(k)=\frac{k^2-k}{512},\;\;\xi_{2,-1}(k)=-\frac{16k}{512},\;\;\xi_{2,0}=-\frac{2k^2}{512},$$
$$\xi_{2,1}(k)=\frac{16k}{512},\quad\xi_{2,2}(k)=\frac{k^2+k}{512}.$$
\[\begin{split}P_3(\theta)=&\mu'(\theta)\varphi_3(\theta)+\frac 22\mu''(\theta)\varphi_1(\theta)\varphi_2(\theta)+\frac{1}{6}\mu'''(\theta)(\varphi_1)^3\\
&=\sum_{k=1}^{+\infty}\sum_{l=-3}^3\xi_{3,l}(k)\Big(a_k\cos ((k+2l)\theta) +b_k\sin ((k+2l)\theta) \Big),\end{split}\]
where 
$$\xi_{3,-3}(k)=-\frac{k}{12288}+\frac{k^2}{8192}-\frac{k^3}{24576},$$
$$\xi_{3,-2}(k)=-\frac{k}{512}+\frac{k^2}{512},\quad\xi_{3,-1}(k)=-\frac{83k}{4096}-\frac{k^2}{8192}+\frac{k^3}{8192},$$
$$\xi_{3,0}(k)=-\frac{k^2}{256},\quad\xi_{3,1}(k)=\frac{83k}{4096}-\frac{k^2}{8192}-\frac{k^3}{8192},\quad\xi_{3,2}(k)=\frac{k+k^2}{512},$$
$$\xi_{3,3}(k)=\frac{k}{12288}+\frac{k^2}{8192}+\frac{k^3}{24576}.$$
%***********************************
 \[\begin{split}P_4(\theta)&=\mu'(\theta)\varphi_4(\theta)+\frac{1}{2}\mu''(\theta)[(\varphi_2(\theta))^2+2\varphi_1(\theta)\varphi_3(\theta)]\\
&\quad+\frac{1}{6}\mu'''(\theta)3(\varphi_1(\theta))^2\varphi_2(\theta)+\frac{1}{24}\mu^{(4)}(\theta)(\varphi_1(\theta))^4\\
&=\sum_{k=1}^{+\infty}\sum_{l=-4}^4\xi_{4,l}(k)\Big(a_k\cos ((k+2l)\theta) +b_k\sin ((k+2l)\theta) \Big),\end{split}\]
where $\xi_{4,j}(k)$, $j=-4,\dots, 4$ are polynomials in  $k$ of at most degree 4, and 
%$$\xi_{4,-4}(k)=\frac{-k}{262144}+\frac{11k^2}{16384}+\frac{-k^3}{262144}+\frac{k^4}{1572864},$$
%$$\xi_{4,-3}(k)=\frac{-k}{8192}+\frac{3k^2}{16384}+\frac{-k^3}{16384},$$
%$$\xi_{4,-2}(k)=\frac{-29k}{16384}+\frac{43k^2}{24576}+\frac{k^3}{131072}+\frac{-k^4}{393216},$$
%$$\xi_{4,-1}(k)=\frac{-121k}{8192}+\frac{-3k^2}{16384}+\frac{3k^3}{16384},\quad\xi_{4,0}=\frac{-921k^2+k^4}{262144},R=$$
%$$\xi_{4,1}(k)=\frac{121k}{8192}-\frac{3k^2}{16384}-\frac{3k^3}{16834},$$
%$$\xi_{4,2}(k)=\frac{29k}{16384}+\frac{43k^2}{24576}-\frac{k^3}{131072}-\frac{k^4}{393216},$$
%$$\xi_{4,3}(k)=\frac{k}{8192}+\frac{3k^2}{16384}+\frac{k^3}{16384},$$
 $$\xi_{4,4}(k)=\frac{k}{262144}+\frac{11k^2}{1572864}+\frac{k^3}{262144}+\frac{k^4}{1572864}.$$
% *****************************
 \[\begin{split}&P_5(\theta)=\mu'(\theta)\varphi_5(\theta)+\frac{2}{2}\mu''(\theta)[\varphi_2(\theta)\varphi_2(\theta)+\varphi_1(\theta)(\varphi_2(\theta))^2]\\
&\quad+\frac{3}{6}\mu'''(\theta)[\varphi_1(\theta)(\varphi_2(\theta))^2+(\varphi_1(\theta))^2\varphi_3(\theta)]\\
&\quad+\frac{4}{24}\mu^{(4)}(\varphi_1(\theta))^3\varphi_2(\theta)+
\frac{1}{120}\mu^{(5)}(\theta)(\varphi_1(\theta))^5\\
&=\sum_{k=1}^{+\infty}\sum_{l=-5}^{5}\xi_{5,l}(k)\Big(a_k\cos ((k+2l)\theta) +b_k\sin ((k+2l)\theta) \Big),\end{split}\]
where $\xi_{5,j}$, $j=-5,\dots, 5$ are polynomials in $k$ of at most degree 5, and  
%$$\xi_{5,-5}(k)=\frac{-k}{5242880}+\frac{5k^2+k^4}{12582912}+\frac{-7k^3}{25165824}+\frac{k^5}{125829120},$$
%$$\xi_{5,-4}(k)=\frac{-k}{131072}+\frac{11k^2}{786432}+\frac{-k^3}{131072}+\frac{k^4}{786432},$$
%$$\xi_{5,-3}(k)=\frac{-37k}{262144}+\frac{885k^2}{4194304}+\frac{-493}{8388608}+\frac{-k^4}{4194304}+\frac{k^5}{25165824},$$
%$$\xi_{5,-2}(k)=\frac{-13k}{8192}+\frac{19k^2}{12288}+\frac{k^3}{65536}+\frac{-k^4}{196608},$$
%$$\xi_{5,-1}(k)=\frac{-12071k}{1048576}+\frac{-665k^2}{3145728}+\frac{2647k^3-k^5}{12582912}+\frac{k^4}{62191456},$$
%$$\xi_{5,0}(k)=\frac{-409k^2+k^4}{131072},\quad\xi_{5,1}=\frac{12071k}{1048576}-\frac{665k^2}{315728}-\frac{2647k^3}{12582912}+\frac{k^4}{6291456}+\frac{k^5}{12582912},$$
%$$\xi_{5,2}(k)=\frac{13k}{8192}+\frac{19k^2}{12288}-\frac{k^3}{65536}-\frac{k^4}{196608},$$
%$$\xi_{5,3}(k)=\frac{37k}{262144}+\frac{885k^2}{4194304}+\frac{1753k^3}{25165824}-\frac{k^4}{4194304}-\frac{k^5}{25165824},$$
%$$\xi_{5,4}(k)=\frac{k}{131072}+\frac{11k^2}{786432}+\frac{k^3}{131072}+\frac{k^4}{786432},$$
$$\xi_{5,5}(k)=\frac{k}{5242880}+\frac{5k^2+k^4}{12582912}+\frac{7k^3}{25165824}+\frac{k^5}{125829120}.$$
% ********************************************
 \[\begin{split}P_6(\theta)&=\mu'(\theta)\varphi_6(\theta)+\frac{1}{2}\mu''(\theta)[2\varphi_1(\theta)\varphi_5(\theta)+2\varphi_2(\theta)\varphi_4(\theta)+(\varphi_3(\theta))^2]\\
 &\quad+\frac{1}{6}\mu'''(\theta)[3(\varphi_1(\theta))^2\varphi_4(\theta)+
 3\varphi_1(\theta)\varphi_2(\theta)\varphi_3(\theta)+(\varphi_2(\theta))^3]\\
 &\quad +\frac{1}{24}\mu^{(4)}(\theta)[4(\varphi_1(\theta))^3\varphi_3(\theta)+
 6(\varphi_1(\theta))^2(\varphi_2(\theta))^2]\\
 &\quad+\frac{1}{120}\mu^{(5)}(\theta)[5(\varphi_1(\theta))^4\varphi_2(\theta)]+\frac{1}{720}\mu^{(6)}(\theta)[(\varphi_1(\theta))^6]\\
 &=\sum_{k=1}^{+\infty}\sum_{l=-6}^{6}\xi_{6,l}(k)\Big(a_k\cos ((k+2l)\theta) +b_k\sin ((k+2l)\theta) \Big),\end{split}\]
 where $\xi_{6,j}(k)$ are polynomials in $k$ of at most order 6, and 
 $$\xi_{6,6}(k)=\frac{k}{100663296}+\frac{137k^2}{6039797760}+\frac{11k^3+k^5}{805306368}+\frac{17k^4}{2415919104}+\frac{k^6}{12079595520}.$$

\medskip 

  \section{The inverse and adjugate of a matrix}\label{appendix-matrix}
We  recall the definition of the adjugate of a matrix and its relation to the inverse of a square matrix in this section.
 
Let $A$ be a $n\times n$ matrix with real entries. The adjugate $\text{adj}(A)$  of~$A$ is the transpose of 
the cofactor matrix $C$ of $A$, 
$$
\text{adj} (A)=C^T.
$$
The cofactor matrix  of $A$ is the $n\times n$ matrix  $C$ whose $(i,j)$-entry is 
the $(i,j)$-cofactor of $A$,
$$
C_{ij}=(-1)^{i+j}M_{ij},
$$
where $M_{i,j}$ is the determinant of the $(n-1)\times(n-1)$ matrix that results from deleting 
the $i$-th row and the $j$-th column  of $A$. Therefore, the adjugate of matrix $A$ is the $n\times n$ 
matrix $\text{adj(A)}$ whose $(i,j)$-entry is the $(j,i)$-cofactor of $A$,
$$
\text{adj}(A)_{ij}=C_{ji}=(-1)^{j+i}M_{ji}.
$$ 
\begin{theorem} \label{thm:inv-matrix} For a square matrix $A=(a_{ij})$,
\begin{enumerate}
\item  $\text{det}(A)=\sum_{i=1}^na_{ij}C_{ij}$, for $j=1,\dots,n$.

\item $A$ is invertible if and only if $\text{det}(A)\neq0$. Moreover, 
the inverse has the form 
$$
A^{-1}=\frac{1}{{\rm{det}}(A)}\ {\rm{adj}}(A).
$$
\end{enumerate}
 \end{theorem}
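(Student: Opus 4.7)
The plan is to prove the two standard statements of Theorem \ref{thm:inv-matrix} about the cofactor/adjugate formula. Although this is a classical linear algebra result, I will outline the cleanest path, which proceeds by first establishing the Laplace (cofactor) expansion of the determinant and then deriving the adjugate formula for the inverse.

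For part (1), I would fix a column index $j$ and use the characterization of $\det(A)$ as the unique alternating multilinear function on the columns of $A$ that sends the identity to $1$. Decompose the $j$-th column as $\sum_{i=1}^n a_{ij} e_i$, where $e_i$ is the $i$-th standard basis vector. By multilinearity in the $j$-th column,
\[
\det(A)=\sum_{i=1}^n a_{ij}\,\det(A^{(i,j)}),
\]
where $A^{(i,j)}$ is $A$ with its $j$-th column replaced by $e_i$. A swap of rows and columns brings this determinant into block form with a $1$ in position $(n,n)$ and the submatrix $M_{ij}$ in the complementary block; tracking the sign of the permutation yields $\det(A^{(i,j)})=(-1)^{i+j}M_{ij}=C_{ij}$. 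This gives the Laplace expansion.

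For part (2), the key observation is the matrix identity
\[
A\cdot\mathrm{adj}(A)=\det(A)\cdot I.
\]
To see this, compute the $(i,k)$-entry of $A\cdot\mathrm{adj}(A)$: it equals $\sum_{j=1}^n a_{ij}\,C_{kj}$. When $i=k$, this is exactly the Laplace expansion from part (1), so the entry is $\det(A)$. When $i\neq k$, the same sum is the Laplace expansion along row $k$ of the matrix $A'$ obtained from $A$ by replacing its $k$-th row with its $i$-th row; since $A'$ has two identical rows, $\det(A')=0$. Transposing this argument (or using that $\det(A)=\det(A^T)$) gives $\mathrm{adj}(A)\cdot A=\det(A)\cdot I$ as well. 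Hence, if $\det(A)\neq 0$, we may divide and obtain $A^{-1}=\det(A)^{-1}\mathrm{adj}(A)$. Conversely, if $A$ is invertible, then $\det(A)\det(A^{-1})=\det(I)=1$ forces $\det(A)\neq 0$.

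There is no real obstacle in this proof; the only mildly delicate point is verifying the sign $(-1)^{i+j}$ in the cofactor formula, which one handles by counting the transpositions needed to move row $i$ to the last row and column $j$ to the last column. The two statements together conclude the theorem.
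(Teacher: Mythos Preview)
Your argument is correct and is exactly the standard textbook proof of the Laplace expansion and the adjugate formula. Note, however, that the paper does not actually supply a proof of this theorem: it is stated in Appendix~\ref{appendix-matrix} as a classical linear-algebra fact and then immediately applied to analyze the coefficient matrices arising in Sections~\ref{case3}--\ref{case-general}. So there is no ``paper's own proof'' to compare against; your write-up simply fills in a proof the authors chose to omit.
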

Now consider the coefficient matrix in \eqref{big1-section6}, which has the form
$$A=(a_{ij})= 
\left(\begin{array}{cccccc}A_{11}e^4&A_{12}e^2 & 1 & 0 & 0&0 
\\ A_{21}e^6&A_{22}e^4 & A_{23}e^2 & 1 & 0&0 
\\ A_{31}e^8&A_{32}e^6 &A_{33}e^4& A_{34}e^2  & 1 &0
\\ A_{41}e^8&A_{42}e^6 & A_{43}e^4 & A_{44}e^2 & 1&0
\\ A_{51}e^{10}&A_{52}e^{8}&A_{53}e^6&A_{54}e^4&A_{55}e^2&1
\\ A_{61}e^{10}&A_{62}e^{8}&A_{63}e^6&A_{64}e^4&A_{65}e^2&1
\end{array}\right)$$

Direct calculation shows that 
$$\text{det}(A)=\sum_{\sigma\in S_6}\text{sgn}(\sigma)\prod_{i=1}^6a_{i\sigma_i}=\sum (\cdots)e^{16}=\mathcal{A} e^{16},$$
where $S_6$ is the group of  all   permutations of $\{1,\dots,6\}$. 
One key feature here is that the nonzero quantities in the summation are all exactly of order $e^{16}$. 
Using part (1) of Theorem \ref{thm:inv-matrix}, we obtain that 
$$C_{11}=c_1e^{12},\; C_{12}=c_2e^{10},\; C_{13}=c_3e^{8},\; C_{14}=c_4e^8,\; C_{15}=c_5e^6,\; C_{16}=c_6e^{6}.$$
Then, using part (2) of Theorem \ref{thm:inv-matrix}, if $\det {A}\neq0$, then the first row of the inverse $A^{-1}$ has the form
$$(O(e^{-4}), O(e^{-6}), O(e^{-8}), O(e^{-8}), O(e^{-10}), O(e^{-10})).$$
In the same way, we obtain that the second row of $A^{-1}$ is of the form
$$(O(e^{-2}), O(e^{-4}), O(e^{-6}), O(e^{-6}), O(e^{-8}), O(e^{-8})).$$
All of the  matrices appearing in Sections \ref{case3}--\ref{case-general} could be treated  similarly. 

\medskip

\end{document}